\newcommand{\F}{{\mathbb{F}}}
\newcommand{\Sph}{{\mathbb{S}}}
\newcommand{\Z}{{\mathbb{Z}}}
\newcommand{\op}{\mathrm{op}}
\newcommand{\colim}{\operatorname*{colim}}
\newcommand{\hocolim}{\operatorname*{hocolim}}
\newcommand{\holim}{\operatorname*{holim}}
\newcommand{\compl}{\hat{(\cdot)}}
\newcommand{\Gal}{\mathrm{Gal}}
\newcommand{\Hom}{\mathrm{Hom}}
\newcommand{\map}{\mathrm{map}}
\newcommand{\Map}{\mathrm{Map}}
\newcommand{\Mapg}{\mathrm{Map}_G}
\newcommand{\hEh}{\hat{\mathcal E}}
\newcommand{\Enik}{E'_{n,I_k}}
\newcommand{\hHhpg}{\hat{{\mathcal H}}_{\ast G}}
\newcommand{\Sh}{{\mathcal S}}
\newcommand{\Shp}{{\mathcal S}_{\ast}}
\newcommand{\Shg}{{\mathcal S}_G}
\newcommand{\Shpg}{{\mathcal S}_{\ast G}}
\newcommand{\hSh}{\hat{\mathcal S}}
\newcommand{\hShg}{\hat{\mathcal S}_G}
\newcommand{\hShp}{\hat{\mathcal S}_{\ast}}
\newcommand{\hShpg}{\hat{\mathcal S}_{\ast G}}
\newcommand{\hSp}{\mathrm{Sp}(\hShp)}
\newcommand{\hSpg}{\mathrm{Sp}(\hShpg)}
\newcommand{\Sp}{\mathrm{Sp}(\Sh_{\ast})}
\newcommand{\Spg}{\mathrm{Sp}(\Shpg)}
\newtheorem{theorem}{Theorem}[section]
\newtheorem{lemma}[theorem]{Lemma}
\newtheorem{defn}[theorem]{Definition}
\newtheorem{cor}[theorem]{Corollary}
\theoremstyle{definition}
\newtheorem{remark}[theorem]{Remark}
\begin{document}

\title{Profinite and discrete $G$-spectra and \\iterated homotopy fixed points}

\author{Daniel G. Davis}
\thanks{The first author was partially supported by a grant (LEQSF(2008-11)-RD-A-27) from the Louisiana Board of Regents during the first half of his work on this 
paper.}
\address{Department of Mathematics, University of Louisiana at Lafayette, Lafayette, LA 70504, USA}
\email{dgdavis@louisiana.edu}

\author{Gereon Quick}\thanks{The second author was partially supported by Research Fellowship QU 317/1 of the German Research Foundation (DFG)}
\address{Department of Mathematical Sciences, NTNU, NO-7491 
Trondheim, Norway}
\email{gereon.quick@math.ntnu.no}

\date{}
\begin{abstract}
For a profinite group $G$, let $(\text{-})^{hG}$, $(\text{-})^{h_dG}$, and $(\text{-})^{h'G}$ 
denote continuous homotopy fixed points for 
profinite $G$-spectra, discrete $G$-spectra, and continuous $G$-spectra 
(coming from towers of discrete $G$-spectra), respectively. We establish some connections between the first two notions, and by using Postnikov towers, for $K \vartriangleleft_c G$ (a closed normal subgroup), give various conditions for when the iterated homotopy fixed points $(X^{hK})^{hG/K}$ exist and are $X^{hG}$. For 
the Lubin-Tate spectrum $E_n$ and $G <_c G_n$, the extended 
Morava stabilizer group, our results 
show that $E_n^{hK}$ is a profinite $G/K$-spectrum with 
$(E_n^{hK})^{hG/K} \simeq E_n^{hG}$, by an argument that 
possesses a certain technical simplicity not enjoyed by either the 
proof that $(E_n^{h'K})^{h'G/K} \simeq E_n^{h'G}$ or 
the Devinatz-Hopkins proof (which requires $|G/K| < \infty$) of 
$(E_n^{dhK})^{h_dG/K} \simeq E_n^{dhG}$, where $E_n^{dhK}$ is a 
construction that behaves like continuous homotopy fixed points. Also, we prove that (in general) the 
$G/K$-homotopy fixed point spectral sequence for $\pi_\ast((E_n^{hK})^{hG/K})$, 
with $E_2^{s,t} = H^s_c(G/K; \pi_t(E_n^{hK}))$ 
(continuous cohomology), is isomorphic to both 
the strongly convergent 
Lyndon-Hochschild-Serre spectral sequence of Devinatz for $\pi_\ast(E_n^{dhG})$
and the descent spectral sequence for 
$\pi_\ast((E_n^{h'K})^{h'G/K})$. 
\end{abstract}
\maketitle

\pagestyle{myheadings}
\markboth{DANIEL G. DAVIS AND GEREON QUICK}{PROFINITE 
AND DISCRETE $G$-SPECTRA \& 
ITERATED HOMOTOPY FIXED POINTS}

\section{Introduction}
If $G$ is a (discrete) group acting on a spectrum $X$, one can form the homotopy fixed point spectrum $X^{hG}$. The spectrum $X^{hG}$ is defined as the $G$-fixed points of the function spectrum $F(EG_+,X)$, where $EG$ is a free contractible $G$-space. If $G$ carries a (non-discrete) topology with respect to which the action on $X$ is in some sense continuous, one would like to 
have constructions of (i) a continuous homotopy fixed point spectrum 
that respects the 
continuous action, and (ii) an associated homotopy fixed point spectral sequence whose $E_2$-term consists of continuous cohomology groups. 
When $G$ is a profinite group, by building on earlier work -- by Jardine \cite{jardine2, jardine} and Thomason \cite{thomason} (see the helpful paper \cite{mitchell} 
by Mitchell), and Goerss \cite{goerss}, in the 
case of discrete objects, and by the second author \cite{gspaces, gspectra} in the case 
of profinite objects, this construction problem has been studied 
(a) for discrete and continuous $G$-spectra in \cite{davis, behrensdavis}, 
and (b) for profinite $G$-spectra in \cite{hfplt}. Motivated by the fact that a 
profinite $G$-set that is finite is also a discrete $G$-set, one of the purposes of this paper is to compare approaches 
(a) and (b) in certain cases 
and let the tools of one approach supplement the 
techniques of the other.

\subsection{Iterated continuous homotopy fixed points and profinite $G$-spectra}\label{oneone}
It is a standard fact that if $H$ is any (discrete) group and $N$ is any normal subgroup, 
then for any $H$-space $\mathcal{X}$, the space $\mathcal{X}^{hN}$ can be identified 
with the $H/N$-space $\mathrm{Map}^N\mspace{-2.8mu}(EH, \mathcal{X})$, so that the iterated homotopy fixed point space $(\mathcal{X}^{hN})^{hH/N}$ is defined. Furthermore, it is well-known that $(\mathcal{X}^{hN})^{hH/N}$ is just $\mathcal{X}^{hH}$. 

We continue to let $G$ be a profinite group and we consider 
the setting of profinite $G$-spectra. If $K$ is a closed normal subgroup of 
$G$ and $X$ is 
any profinite $G$-spectrum, the fixed points satisfy the equality 
$X^G=(X^K)^{G/K}$. 
This identity and the aforementioned fact 
that $(\mathcal{X}^{hN})^{hH/N} \simeq \mathcal{X}^{hH}$ motivate the following question about continuous homotopy fixed points $(-)^{hG}$ for profinite 
$G$-spectra: 
is there an equivalence
\begin{equation}\label{iteratedeq}
X^{hG} \simeq (X^{hK})^{hG/K}
\end{equation} 
between these two spectra? (In (\ref{iteratedeq}) and henceforth, whenever the 
group $P$ 
is profinite, the notation $(-)^{hP}$ denotes homotopy fixed points for profinite 
$P$-spectra.)


In the setting of profinite groups and 
for any object in some category of $G$-spectra, the above question 
was first asked in \cite[page 130]{devinatz2} and, for the category of discrete 
$G$-spectra, the question was studied in detail in \cite{davis2, behrensdavis, delta}. The equivalence in (\ref{iteratedeq}) 
would simplify the analysis of the homotopy fixed points under $G$ by reducing it to the study of those under proper closed normal subgroups $K$ and the quotients $G/K$. 

Unfortunately, it is in general not known that the homotopy fixed point spectrum $X^{hK}$ has the same topological characterization as the profinite $G$-spectrum $X$. For example, when 
the profinite group $G/K$ is not finite, $X^{hK}$ is in general not known to be a profinite $G/K$-spectrum. These basic issues in the problem of iteration are considered in more 
detail in this paper in Sections \ref{fourone} and \ref{fourtwo}, and 
to make progress on them, the following terminology is helpful. 

\begin{defn}\label{vcd}
The profinite group $G$ has finite cohomological dimension if there exists a positive 
integer $r$ such that the continuous cohomology $H^s_c(G;M) = 0$, for all $s \geq r$ and every discrete 
$G$-module $M$ $\mathrm{(}$this notion is more commonly referred to as finite strict cohomological dimension$\mathrm{)}$. We say that $G$ has finite virtual cohomological dimension 
if $G$ contains an open subgroup $U$ that has finite cohomological dimension. 
\end{defn}

To address the situation described just before Definition \ref{vcd}, we provide various sets of sufficient conditions on 
$G$ and $X$ that allow for the formation of the iterated continuous homotopy fixed points $(X^{hK})^{hG/K}$ and 
the obtaining of equivalence \eqref{iteratedeq}. 
A useful tool for this work is a comparison result between profinite and discrete homotopy fixed points: for certain $X$ that are built out of simplicial finite discrete 
$G$-sets (see Definition \ref{fgspectrum}), the two notions of continuous homotopy fixed points are each defined, 
and under the assumption that $G$ has finite virtual cohomological dimension, 
Theorem \ref{profdisc} gives an equivalence 
\[X^{hG} \simeq X^{h_dG},\] where on the right side, $X$ is regarded as a discrete $G$-spectrum and $(-)^{h_dG}$ denotes homotopy fixed points for 
discrete $G$-spectra. 

Another very helpful tool is the notion of a 
$K$-Postnikov $G$-spectrum, for $K$ a closed normal 
subgroup of $G$: such an object is 
a certain type of profinite 
$G$-spectrum that has well-behaved Postnikov sections with respect to $K$ 
(see Definition \ref{postnikovK} for the details). For these 
objects, we have the following iteration result. 

\begin{theorem}[{Theorem \ref{fiterated}}]\label{firstmain}
If $G/K$ has finite virtual cohomological dimension and $X$ is a 
$K$-Postnikov $G$-spectrum, then there is an equivalence 
\[X^{hG} \simeq (X^{hK})^{hG/K}.\]
\end{theorem}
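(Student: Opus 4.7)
The plan is to leverage the $K$-Postnikov structure on $X$ to reduce the claim to a layer-by-layer statement where each layer is essentially ``discrete'' in the sense needed by Theorem \ref{profdisc}, and then to combine that comparison with the already established iteration theorem for discrete $G$-spectra from \cite{behrensdavis, davis2}.

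First, I would use the Postnikov tower coming from the $K$-Postnikov hypothesis, writing $X \simeq \holim_n P_n X$ where each section $P_n X$ is a profinite $G$-spectrum whose $K$-action is controlled by simplicial finite discrete $G$-sets in the sense of Definition \ref{fgspectrum}. The crucial consequence of the $K$-Postnikov hypothesis is that $(P_n X)^{hK}$ is itself a profinite $G/K$-spectrum, so that the iterated construction $((P_n X)^{hK})^{hG/K}$ is even defined; this is precisely what resolves the obstruction described in \S\ref{oneone}.

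At each finite Postnikov level, Theorem \ref{profdisc} gives
\[
(P_n X)^{hK} \simeq (P_n X)^{h_d K}, \qquad ((P_n X)^{hK})^{hG/K} \simeq ((P_n X)^{h_d K})^{h_d G/K},
\]
where the second equivalence uses the finite vcd of $G/K$, and a similar application at the level of $G$ gives $(P_n X)^{hG} \simeq (P_n X)^{h_d G}$. Invoking the iteration result for discrete $G$-spectra, $((P_n X)^{h_d K})^{h_d G/K} \simeq (P_n X)^{h_d G}$, then yields the layer-wise equivalence $(P_n X)^{hG} \simeq ((P_n X)^{hK})^{hG/K}$. Finally, taking $\holim_n$ of both sides and commuting it past the continuous homotopy fixed point functors produces the desired equivalence $X^{hG} \simeq (X^{hK})^{hG/K}$.

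The principal technical obstacle is exactly this last commutation step, particularly on the left-hand side, where one must pass $\holim_n$ through both $(-)^{hK}$ and $(-)^{hG/K}$ while remaining inside the category of profinite spectra. This is where the $K$-Postnikov hypothesis pays off, as it ensures that $\{(P_n X)^{hK}\}_n$ is a tower of profinite $G/K$-spectra with homotopy limit $X^{hK}$, and where the finite virtual cohomological dimension of $G/K$ is used to control the $\lim^1$-terms in the descent/homotopy fixed point spectral sequences so that homotopy limits and continuous $G/K$-homotopy fixed points commute up to weak equivalence. The rest of the argument is essentially formal once these two ingredients — the Postnikov reduction and the comparison Theorem \ref{profdisc} — are in place.
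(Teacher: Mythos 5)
Your overall strategy -- pass to the Postnikov tower, compare profinite with discrete homotopy fixed points levelwise, invoke the discrete iteration result, and then take homotopy limits -- is indeed the shape of the paper's argument. But there is a genuine gap in how you justify the levelwise comparisons. You apply Theorem \ref{profdisc} to obtain $(P^q X)^{hK} \simeq (P^q X)^{h_dK}$ and $(P^q X)^{hG} \simeq (P^q X)^{h_dG}$, yet Theorem \ref{profdisc} requires the \emph{acting} group to have finite virtual cohomological dimension, and the hypothesis of Theorem \ref{firstmain} only grants this for $G/K$ -- neither $K$ nor $G$ is assumed to have finite vcd. The paper gets these two comparisons for free from the coconnectivity of the Postnikov sections: since each $P^qX$ satisfies $\pi_t(P^qX)=0$ for $t>q$, the result \cite[Theorem 7.2]{delta} gives $(P^qX)^{h_dH} \simeq \holim_\Delta \mathrm{Map}_c(H^{\bullet+1},P^qX)^H$ for \emph{every} closed subgroup $H$, with no cohomological-dimension hypothesis, and then Remark \ref{arbitrary} identifies this with $(P^qX)^{hH}$. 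The only place the paper uses Theorem \ref{profdisc} (and hence the finite vcd of $G/K$) is in the equivalence $(X^q(K))^{h_dG/K} \simeq (X^q(K))^{hG/K}$ for the $f$-$G/K$-spectra $X^q(K)$. Your proposal as written therefore proves the theorem only under the stronger, unstated assumption that $K$ and $G$ also have finite vcd.

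A second, related imprecision: you invoke ``the iteration theorem for discrete $G$-spectra'' as a black box, but that result requires hyperfibrancy, which is a nontrivial condition in general (as the introduction emphasizes). What makes it available here is again the coconnectivity of each $P^qX$, which forces $P^qX$ to be a hyperfibrant discrete $G$-spectrum and gives $(P^qX)^{h_dG} \simeq \bigl(\colim_j (P^qX)^{h_dK_j}\bigr)^{h_dG/K}$ via \cite[proof of Lemma 4.9]{davis2}. Likewise, the role of the $K$-Postnikov hypothesis is not to make $(P^qX)^{hK}$ literally a profinite $G/K$-spectrum, but to supply the inverse system of $f$-$G/K$-models $X^q(K)$ together with the compatibilities (b)--(d) of Definition \ref{postnikovK} that let one substitute $X^q(K)$ for $(P^qX)^{h_dK}$ and identify $\holim_q X^q(K)$ as a profinite $G/K$-model for $X^{hK}$; and the final commutation of $\holim_q$ with $(-)^{hG/K}$ is handled by the fibrancy of $\holim_q X^q(K)$ and \cite[Proposition 3.12]{hfplt}, not by a $\lim^1$ argument using the vcd of $G/K$. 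Once you replace your two illegitimate uses of Theorem \ref{profdisc} by the coconnectivity-based comparison and make the role of hyperfibrancy explicit, your argument coincides with the paper's.
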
 

For the duration of this paragraph, we assume that $G$ is strongly complete (equivalently, if every subgroup of $G$ that has finite 
index is open in $G$) and we let $Z$ be a (naive) $G$-spectrum with stable homotopy groups $\pi_\ast(Z)$ degreewise finite. 
These two conditions imply that 
$Z$ can be realized by the profinite $G$-spectrum $F^s_GZ$ 
(see Theorem \ref{Gstablefinitecompletion}, which is recalled from \cite{gspectra}, for the details). In Theorem 
\ref{pifiniteiterated}, we use Theorem \ref{firstmain} to show that if 
$G/K$ has finite virtual cohomological dimension and $F^s_GZ$ 
is a $K$-Postnikov $G$-spectrum, then the equivalence in 
(\ref{iteratedeq}) is valid for $F^s_GZ$ and, as carefully explained (as the theory 
gradually develops) in Section \ref{four}, it is natural to write 
\[(Z^{hK})^{hG/K} \simeq Z^{hG}.\] 

By Lemma \ref{groupresult}, whenever a profinite group $G$ is a closed subgroup of a $p$-adic analytic profinite group, then, as needed in the above paragraph, 
$G$ (and $G/K$) is strongly complete and 
$G/K$ has finite virtual cohomological dimension. In Theorem \ref{postnikovresult}, Corollary \ref{goodtool}, and Remark \ref{symondsremark}, we give various conditions on a profinite $G$-spectrum that imply it is a $K$-Postnikov $G$-spectrum. To illustrate the results mentioned 
in this paragraph and the preceding one, we give 
the following interesting result.

\begin{theorem}\label{notinlatersection}
Let $p$ be any fixed prime. If the profinite group $G$ is $p$-adic analytic, with 
$K$ a closed normal subgroup, 
and $Z$ is any $G$-spectrum with 
$\pi_t(Z)$ a finite $p$-torsion abelian group for every $t \in \mathbb{Z}$, then 
\[(Z^{hK})^{hG/K} \simeq Z^{hG}.\]
\end{theorem}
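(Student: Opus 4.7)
The plan is to assemble this as a direct consequence of the machinery developed earlier in the paper, with the strategy being to reduce to the case of a profinite $G$-spectrum built by profinite completion from $Z$ and then invoke the iteration result Theorem \ref{firstmain} (equivalently the version formulated in Theorem \ref{pifiniteiterated}). The argument will proceed in three steps, each of which reads off consequences from results already established, so the work is mostly in verifying that the hypotheses are met rather than in producing a new comparison.

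First I would handle the group-theoretic side. Since $G$ is $p$-adic analytic, $G$ is itself a closed subgroup of a $p$-adic analytic profinite group, so Lemma \ref{groupresult} applies: both $G$ and $G/K$ are strongly complete, and $G/K$ has finite virtual cohomological dimension. This gives the two standing hypotheses that feed Theorem \ref{firstmain} and are required before the notation $Z^{hG}$ and $(Z^{hK})^{hG/K}$ (as interpreted via profinite completion) even makes sense, per the discussion leading to Theorem \ref{pifiniteiterated}.

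Second I would use the finiteness of the homotopy groups of $Z$ to produce a profinite model. Because each $\pi_t(Z)$ is a finite $p$-torsion abelian group, Theorem \ref{Gstablefinitecompletion} applies to the (naive) $G$-spectrum $Z$ and yields a profinite $G$-spectrum $F^s_G Z$ realizing $Z$; by the conventions set up in Section \ref{four}, we then define
\[
Z^{hG} := (F^s_G Z)^{hG}, \qquad (Z^{hK})^{hG/K} := ((F^s_G Z)^{hK})^{hG/K},
\]
so that Theorem \ref{notinlatersection} becomes the assertion $(F^s_G Z)^{hG} \simeq ((F^s_G Z)^{hK})^{hG/K}$.

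The remaining and most substantive step is to verify that $F^s_G Z$ is a $K$-Postnikov $G$-spectrum, because then Theorem \ref{firstmain} delivers the desired equivalence immediately. Here I would appeal to one of the sufficient criteria advertised just before the statement, namely Theorem \ref{postnikovresult}, Corollary \ref{goodtool}, or Remark \ref{symondsremark}. The key observation is that the homotopy groups of $F^s_G Z$ are finite and $p$-primary, and $G$ (hence $K$) is $p$-adic analytic, so $K$ has finite virtual cohomological dimension at $p$; this is exactly the kind of hypothesis under which those criteria produce well-behaved Postnikov sections relative to $K$. I expect the main technical obstacle in a careful writeup to be precisely this verification: one must check that the degreewise finite, $p$-torsion structure of the homotopy groups together with the $p$-adic analytic assumption on $G$ ensures that the Postnikov tower of $F^s_G Z$ is a tower of $K$-profinite spectra in the sense of Definition \ref{postnikovK}, rather than the more familiar weaker condition that merely guarantees convergence of a spectral sequence. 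Once that is in hand, Theorem \ref{firstmain} completes the proof.
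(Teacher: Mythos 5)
Your proposal is correct and follows essentially the same route as the paper: Lemma \ref{groupresult} for the group-theoretic hypotheses, Theorem \ref{Gstablefinitecompletion} and Definition \ref{twodefs} to replace $Z$ by $F^s_GZ$, verification that $F^s_GZ$ is a $K$-Postnikov $G$-spectrum, and then Theorem \ref{pifiniteiterated}. The one point to tighten in your third step is that the hypothesis actually needed for Corollary \ref{goodtool} is the \emph{finiteness} of $H^s_c(K;\pi_t(F^s_GZ))$ for all $s$ and $t$, which follows not from $K$ having finite virtual cohomological dimension at $p$ but from $K$ being of type $p$-$\mathbf{FP}_\infty$ (since $K$ is $p$-adic analytic by \cite[Theorem 9.6]{dsms}) together with \cite[Proposition 4.2.2]{symondsweigel}, as recorded in Remark \ref{symondsremark}.
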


\begin{remark}
Theorem \ref{notinlatersection} is easily stated and 
compelling, but its brief proof involves a little page-turning: 
by \cite[Theorem 9.6]{dsms}, $K$ is $p$-adic analytic, 
so it is of type $p$-$\mathbf{FP}_\infty$ (see \cite[page 377; Theorem 5.1.2]{symondsweigel}); by Theorem \ref{Gstablefinitecompletion}, for every $t$, there is an 
isomorphism $\pi_t(Z) \cong \pi_t(F^s_GZ)$ of $G$-modules and $\pi_t(F^s_GZ)$ is a discrete 
$G$-module (see just after Definition \ref{fgspectrum}); by Remark \ref{symondsremark}, 
$H^s_c(K; \pi_t(F^s_GZ))$ is finite for all $s$ and $t$; by Corollary \ref{goodtool}, $F^s_GZ$ is a $K$-Postnikov $G$-spectrum.
\end{remark}

Let $J$ be any small 
category. Our next step is to apply the above tools 
and results to the homotopy limit of $\{X_\beta\}_{\beta \in J}$, 
a diagram of 
$K$-Postnikov $G$-spectra that is natural in $\beta$. This concept 
(with matching name) is formalized in Definition \ref{KGbeta}; as explained in 
Remark \ref{aboutprevious}, the definition 
just unpacks the meaning present in the name in a natural way. 
For such diagrams, we obtain the following result.

\begin{theorem}[{Theorem \ref{iteratedthm}}]\label{nextmain}
If $G$ is a profinite group with $G/K$ having finite virtual cohomological 
dimension, then there is an equivalence 
\[\Bigl(\mspace{-2mu}\bigl(\holim_\beta X_\beta\bigr)^{\negthinspace hK}\Bigr)^{\negthinspace hG/K} \simeq 
\bigl(\holim_\beta X_\beta\bigr)^{\negthinspace hG}\] and a conditionally convergent 
spectral sequence
\[E_2^{s,t} = H^s_c\bigl(G/K; \pi_t\bigl((\holim_\beta X_\beta)^{hK}\bigr)\bigr) 
\Rightarrow \pi_{t-s}\bigl((\holim_\beta X_\beta)^{hG}\bigr).\]
\end{theorem}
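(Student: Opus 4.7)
The plan is to bootstrap Theorem~\ref{firstmain} from a single $K$-Postnikov $G$-spectrum to the homotopy limit of a natural diagram of them, by commuting the three homotopy fixed point functors past $\holim_\beta$, and then to read off the spectral sequence as the descent spectral sequence for the profinite $G/K$-spectrum $(\holim_\beta X_\beta)^{hK}$.

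First I would record that each of $(-)^{hG}$, $(-)^{hK}$, and $(-)^{hG/K}$ is, in the profinite setting, built as a mapping-spectrum construction (cotensor with a free contractible resolution of the point), and hence behaves as a right adjoint up to homotopy on the appropriate category of profinite equivariant spectra; such functors commute with homotopy limits. The hypothesis that $\{X_\beta\}_{\beta \in J}$ is a natural diagram of $K$-Postnikov $G$-spectra (Definition~\ref{KGbeta}) is, as expanded on in Remark~\ref{aboutprevious}, designed precisely to guarantee that $\holim_\beta X_\beta$ is a profinite $G$-spectrum on which $(-)^{hG}$ is defined, and that there is a natural equivalence $(\holim_\beta X_\beta)^{hK} \simeq \holim_\beta (X_\beta^{hK})$ of profinite $G/K$-spectra.

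With those commutations in hand, I would apply Theorem~\ref{firstmain} to each $X_\beta$ to obtain $X_\beta^{hG} \simeq (X_\beta^{hK})^{hG/K}$, naturally in $\beta$. Taking $\holim_\beta$ and stringing the commutations together produces
\[
(\holim_\beta X_\beta)^{hG} \simeq \holim_\beta X_\beta^{hG} \simeq \holim_\beta (X_\beta^{hK})^{hG/K} \simeq \bigl(\holim_\beta (X_\beta^{hK})\bigr)^{hG/K} \simeq \bigl((\holim_\beta X_\beta)^{hK}\bigr)^{hG/K},
\]
which is the first assertion. For the spectral sequence, setting $Y = (\holim_\beta X_\beta)^{hK}$, I would invoke the conditionally convergent homotopy fixed point spectral sequence for the profinite $G/K$-spectrum $Y$, whose existence is ensured by $G/K$ having finite virtual cohomological dimension. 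That spectral sequence has the form $E_2^{s,t} = H^s_c(G/K; \pi_t(Y)) \Rightarrow \pi_{t-s}(Y^{hG/K})$, and substituting the equivalence $Y^{hG/K} \simeq (\holim_\beta X_\beta)^{hG}$ for the abutment gives the statement.

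The main obstacle is the clean verification that $(\holim_\beta X_\beta)^{hK} \simeq \holim_\beta (X_\beta^{hK})$ holds as profinite $G/K$-spectra, with matching topology and $G/K$-action; everything downstream hinges on this. The $K$-Postnikov hypothesis passes to the homotopy limit only by virtue of the naturality encoded in Definition~\ref{KGbeta}, and without it one would have to confront $\mathrm{lim}^1$ phenomena in the Postnikov tower that could both obstruct the commutation of fixed points with $\holim_\beta$ and threaten the conditional convergence of the resulting spectral sequence.
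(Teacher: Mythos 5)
Your proposal is correct and follows essentially the same route as the paper: apply Theorem \ref{fiterated} to each $X_\beta$, commute the homotopy fixed point functors past $\holim_\beta$ (the paper cites \cite[Proposition 3.12]{hfplt} for this), and read off the spectral sequence as the descent spectral sequence (\ref{hfpss}) for the profinite $G/K$-model of $(\holim_\beta X_\beta)^{hK}$. The one piece of bookkeeping you leave implicit --- that the equivalences involving $(-)^{hG/K}$ must be carried out on the models $\holim_q X^q_\beta(K)$ rather than on $X_\beta^{hK}$ itself, with properties (c)--(e) of Definition \ref{KGbeta} supplying exactly the needed naturality and the identification of $\pi_t$ of the model with $\pi_t\bigl((\holim_\beta X_\beta)^{hK}\bigr)$ --- is precisely the obstacle you flag at the end, and it is resolved in the paper exactly as you anticipate.
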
 

In the above spectral sequence, the $E_2$-term is continuous cohomology 
with coefficients the profinite $G/K$-module 
$\pi_t\bigl((\holim_\beta X_\beta)^{hK}\bigr)$. 

In 
Corollary \ref{iteratedpifinitecor}, we use Theorem \ref{nextmain} to give 
a result about when $\holim_{\beta \in J} Z_\beta$ (as above, $J$ is a small 
category), where $\{Z_\beta\}_{\beta \in J}$ is a diagram of $G$-spectra that 
have all homotopy groups finite (and each $Z_\beta$ must be a fibrant spectrum), 
can be realized by a profinite $G$-spectrum for which the equivalence in 
(\ref{iteratedeq}) holds.

\begin{remark}
If $K$ is nontrivial and non-open in a profinite group $G$ and $Y$ is a discrete $G$-spectrum that is not coconnective (that is, there is no $k$ such that $\pi_t(Y) = 0$ whenever 
$t > k$), the 
most fruitful condition for obtaining the equivalence 
\[(Y^{h_dK})^{h_dG/K} \simeq Y^{h_dG}\] 
is the requirement that $Y$ be a hyperfibrant discrete $G$-spectrum (see \cite{davis2}; some details about this are recalled in Remark \ref{hyperfibrant}). But in general, hyperfibrancy is a highly nontrivial condition, and 
in practice, one of the properties that has been used to show that it holds is that 
of being a certain type of Galois extension (for example, the conclusion of the first sentence in \cite[proof of Lemma 6.3.6]{behrensdavis} 
gives the first isomorphism in \cite[proof of Lemma 7.1]{davis2}, and 
this last lemma is needed to prove the hyperfibrancy result 
\cite[Corollary 7.2]{davis2}). Our work in this paper shows that 
the Postnikov tower of a profinite $G$-spectrum is quite useful in the 
study of iterated continuous homotopy fixed points, 
and it has allowed us to avoid dealing with hyperfibrancy for non-coconnective 
spectra and Galois extensions.
\end{remark}

\subsection{An application of our theory: the action of $G_n$ on $E_n$}
For the study of continuous actions by profinite groups in homotopy theory, a fundamental and motivating example 
is the action of the extended Morava stabilizer group $G_n$ on the Lubin-Tate spectrum $E_n$. We quickly review this example.

Let $p$ be a fixed prime, $n\geq 1$ an integer and $\F_{p^n}$ the field with $p^n$ elements. Let $S_n$ be the $n$th Morava stabilizer group, i.e. the automorphism group of the height $n$ Honda formal group law over $\F_{p^n}$. We denote by $\Gal(\F_{p^n}/\F_p)$ the Galois group of $\F_{p^n}$ over $\F_p$ and let \[G_n=S_n \rtimes \Gal(\F_{p^n}/\F_p)\] 
be the semi-direct product. Let $K(n)$ be the $n$th Morava $K$-theory spectrum with $K(n)_*=\F_p[v_n^{\pm1}]$, with $|v_n|=2(p^n-1)$. The Lubin-Tate spectrum $E_n$ is the $K(n)_*$-local Landweber exact spectrum whose coefficients are given by \[E_{n\ast}=W(\F_{p^n})[[u_1,\ldots,u_{n-1}]] [u,u^{-1}],\] where $W(\F_{p^n})$ is the ring of Witt vectors of the field $\F_{p^n}$, $|u_i|=0$ for all $i$ and $|u|=-2$. The group $G_n$ acts 
on the graded ring $E_{n\ast}$, and by Brown representability, this action is induced by an action of $G_n$ on $E_n$ by maps of ring spectra in the stable homotopy category. Furthermore, by work of Goerss, Hopkins and Miller 
(see \cite{goersshopkins, rezk}), this homotopy action is induced by 
an action of $G_n$ on $E_n$ 
before passage to the stable homotopy category. 

Now $G_n$ is a profinite group and each homotopy group $\pi_t E_n$ has the structure of a continuous profinite $G_n$-module. From Morava's change of rings theorem we know that the continuity of the action of $G_n$ on each $\pi_tE_n$ is an important property for stable homotopy theory (to view Morava's theorem in action, see, for example, \cite[Section 2]{devinatzhopkins}). The most succinct way to convey the importance of this continuous 
action is to note that for any finite spectrum $Y$, there is a strongly convergent homotopy fixed point spectral 
sequence that has the form
\[H^s_c(G_n; \pi_t(E_n \wedge Y)) \Rightarrow \pi_{t-s}\bigl(L_{K(n)}(Y)\bigr),\] 
where the $E_2$-term is continuous cohomology, $\pi_t(E_n \wedge Y)$ 
is a continuous profinite $G_n$-module (this 
structure is induced by $G_n$ acting diagonally on $E_n \wedge Y$, with $Y$ given the trivial $G_n$-action), 
and $L_{K(n)}(-)$ denotes Bousfield localization with respect to 
$K(n)$ (this result is due to \cite[Corollary 8.2.4, Theorem 8.2.5]{behrensdavis} and 
\cite[Theorem 1]{devinatzhopkins}; see 
also \cite[Proposition 7.4]{homasa}). Therefore, 
to make sense of $E_n$ as a {\em continuous} $G_n$-spectrum is a fundamental problem.

For the closed subgroups $G$ of $G_n$, Devinatz and Hopkins \cite{devinatzhopkins} gave a construction of commutative $S$-algebras, here denoted by $E_n^{dhG}$, that behave like continuous homotopy fixed point spectra. However, the construction of the $E_n^{dhG}$ does not make use of 
a continuous action of $G$ on $E_n$. Using the construction of $E_n^{dhU}$ for open normal subgroups $U$ of $G_n$, a new and systematic definition of homotopy fixed points with respect to a continuous $G$-action, for arbitrary closed subgroups $G$, was given in \cite{davis}: we denote these continuous homotopy fixed points by $E_n^{h'G}$. The formation of the $E_n^{h'G}$ 
is based on the notion of discrete $G$-spectrum (a spectrum that is built out of simplicial discrete $G$-sets) and homotopy limits of towers of discrete $G$-spectra (such homotopy limits are the continuous $G$-spectra of \cite{davis}).

In \cite{hfplt}, a different construction for a continuous homotopy fixed point spectrum $E_n^{hG}$ and its descent spectral sequence, independent of \cite{devinatzhopkins} and \cite{davis}, has been obtained. The approach of \cite{hfplt} is to consider $E_n$ as an object in the category of profinite $G$-spectra, 
which, in contrast to the discrete $G$-spectra mentioned above, are $G$-spectra 
that are built out of simplicial profinite $G$-sets. In this framework, the profinite $G$-spectrum $E_n$ is a (homotopy) limit of certain spectra that 
are simultaneously discrete $G$-spectra and profinite 
$G$-spectra.  

Each one of the above approaches has its own advantages (and drawbacks). But, as one might expect, there are equivalences 
\[
E_n^{dhG} \simeq E_n^{h'G} \simeq E_n^{hG}
\] 
for every closed subgroup $G$, by 
\cite[Theorem 8.2.1]{behrensdavis} and \cite[top of page 220]{hfplt}, respectively.

It is no surprise that for the profinite $G_n$-spectrum $E_n$ there are 
iteration issues related to those mentioned just before Definition \ref{vcd}. For example, in \cite{devinatzhopkins} 
Devinatz and Hopkins did not obtain a construction of a 
continuous homotopy fixed point spectrum $(E_n^{dhK})^{hG/K}$  
when $G/K$ is not finite. Nevertheless, by a sophisticated study of the structure of $E_n^{dhK}$ as a $E_n^{dhG}$-module, Devinatz \cite{devinatz2} was able to construct 
a strongly convergent (Adams-type) Lyndon-Hochschild-Serre spectral sequence 
\begin{equation}\label{lhsSS}
H_c^*(G/K;\pi_\ast(E_{n}^{dhK})) \Rightarrow \pi_\ast(E_n^{dhG}),
\end{equation}
with $E_2$-term given by continuous cohomology.  

In \cite{davis2}, the first author was able to make sense of $E_n^{h'K}$ (as defined in \cite{davis}) as a continuous $G/K$-spectrum for an arbitrary closed normal subgroup $K$. Moreover, it was shown in \cite{davis2} that there is an equivalence 
\[E_n^{h'G}\simeq (E_n^{h'K})^{h'G/K}\] and a descent spectral sequence 
\begin{equation}\label{davis2SS}
H^\ast_c(G/K; \pi_\ast(E_n^{h'K})) \Rightarrow \pi_\ast(E_n^{h'G})
\end{equation}
that is
isomorphic to spectral sequence (\ref{lhsSS}), by \cite[Theorem 7.6]{davis2}. 

Though it is somewhat of an oversimplification, let us describe the results of 
the preceding paragraph as taking place in the ``world of continuous $G$-spectra" (this 
terminology is an adaptation of the ``$G$-world" terminology of \cite{jardine} (for example, see \cite[page 211]{jardine})). Then 
one of the purposes of this paper is to show that analogous results 
hold in the setting of profinite $G$-spectra, by applying results that were described in Section \ref{oneone} and by using the independent construction of continuous homotopy fixed points in \cite{hfplt}. 
We accomplish this goal by obtaining the following two results, Theorems 
\ref{iteratedthmEnintro} and \ref{thm:SS}.

\begin{theorem}\label{iteratedthmEnintro}
Let $G$ be an arbitrary closed subgroup of $G_n$ and let $K$ be a closed normal subgroup of $G$. The continuous homotopy fixed point spectrum $E_n^{hK}$ has a model in the category of profinite $G/K$-spectra, there is an iterated continuous homotopy fixed point spectrum $(E_n^{hK})^{hG/K}$ and there is an equivalence  
\[
E_n^{hG} \simeq (E_n^{hK})^{hG/K}. 
\] 
\end{theorem}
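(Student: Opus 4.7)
The plan is to realize $E_n$ as the homotopy limit of a suitable diagram of profinite $G$-spectra with small homotopy groups, and then apply Theorem \ref{nextmain} to this diagram. First, since $G$ is a closed subgroup of the $p$-adic analytic group $G_n$, both $G$ and $G/K$ are $p$-adic analytic; hence by Lemma \ref{groupresult} they are strongly complete and have finite virtual cohomological dimension. In particular, the standing hypothesis of Theorem \ref{nextmain} on $G/K$ is satisfied, and moreover $K$, being closed in $G$, is itself $p$-adic analytic and therefore of type $p$-$\mathbf{FP}_\infty$ by \cite[Theorem 9.6]{dsms} and \cite[Theorem 5.1.2]{symondsweigel}.

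Next, recall from \cite{hfplt} the construction of $E_n$ as a profinite $G_n$-spectrum: there is a small indexing category $J$ (indexing the relevant ideals $I$) and a diagram $\{\Eni\}_{\beta \in J}$ of spectra that are simultaneously discrete $G_n$-spectra and profinite $G_n$-spectra, each with homotopy groups that are finite $p$-torsion abelian groups, such that $E_n \simeq \holim_\beta \Eni$ in profinite $G_n$-spectra, and hence in profinite $G$-spectra after restriction of action.

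The key intermediate step is then to verify that $\{\Eni\}_{\beta \in J}$ is a diagram of $K$-Postnikov $G$-spectra in the sense of Definition \ref{KGbeta}. For each $\beta$, the groups $\pi_t(\Eni)$ are finite $p$-torsion discrete $G$-modules, and since $K$ is of type $p$-$\mathbf{FP}_\infty$, Remark \ref{symondsremark} implies that $H^s_c(K; \pi_t(\Eni))$ is finite for all $s$ and $t$. Corollary \ref{goodtool} then produces the $K$-Postnikov structure on each $\Eni$, and the construction of these Postnikov sections from the homotopy groups and $k$-invariants is natural in the diagram, which is exactly the requirement of Definition \ref{KGbeta}. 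The main technical point I expect to have to check carefully is precisely this naturality of the $K$-Postnikov structure in $\beta$ (that the zigzags producing the $K$-level Postnikov sections can be chosen compatibly with the structure maps of the diagram); the rest of the argument is formal.

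With the diagram identified as a diagram of $K$-Postnikov $G$-spectra, Theorem \ref{nextmain} applies directly: $E_n^{hK} \simeq (\holim_\beta \Eni)^{hK}$ inherits the structure of a profinite $G/K$-spectrum, the iterated continuous homotopy fixed point spectrum $(E_n^{hK})^{hG/K}$ exists, and there is an equivalence
\[
\bigl(E_n^{hK}\bigr)^{hG/K} \;\simeq\; \bigl(\holim_\beta \Eni\bigr)^{hG} \;\simeq\; E_n^{hG},
\]
where the last equivalence uses that $(-)^{hG}$ commutes with the homotopy limit presentation of $E_n$ coming from \cite{hfplt}. This yields the theorem.
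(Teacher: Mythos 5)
Your proposal is correct and follows essentially the same route as the paper: both identify the tower $\{E'_{n,I_k}\}_k$ (with $E'_{n,I_k} = F^s_{G_n}((E_n\wedge M_{I_k})_f)$, whose homotopy groups are the finite $p$-torsion groups $\pi_t E_n/I_k\pi_t E_n$) as a diagram of $K$-Postnikov $G$-spectra natural in $k$, using Lemma \ref{groupresult}, Remark \ref{symondsremark} (via $K$ being of type $p$-$\mathbf{FP}_\infty$) and Corollary \ref{goodtool}, and then apply Theorem \ref{iteratedthm}. The naturality in the index that you flag as the main point to check is exactly the step the paper also passes over lightly (``it is easy to see''), after exhibiting the explicit models $L^q_k = F^s_{G/K}\bigl(\colim_j \mathrm{Map}(EG, P^q E'_{n,I_k})^{K_j}\bigr)$.
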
  

The proof of Theorem \ref{iteratedthmEnintro} is given in 
Section \ref{LT} and the helpful notion of ``model" that is 
used in the theorem is explained in a precise 
way in Definition \ref{promodel}.

Before stating Theorem \ref{thm:SS}, we would like to be more explicit about the first two conclusions of Theorem 
\ref{iteratedthmEnintro} and thereby quickly illustrate that profinite iteration 
problems are not easy to solve. The proof of Theorem \ref{iteratedthmEnintro} 
shows that the spectrum $E_n^{hK}$ can be identified with the profinite 
$G/K$-spectrum 
\begin{equation}\label{long}
\holim_{k \geq 0} \holim_{q \in \mathbb{Z}} F^s_{G/K}\Bigl(\mspace{2mu}\colim_{U \vartriangleleft_o G} 
\mathrm{Map}(EG, P^q E'_{n, I_k})^{KU}\Bigr),\end{equation} so that 
\[(E_n^{hK})^{hG/K} = \Bigl(\holim_{k \geq 0} \holim_{q \in \mathbb{Z}} F^s_{G/K}\Bigl(\mspace{2mu}\colim_{U \vartriangleleft_o G} 
\mathrm{Map}(EG, P^q E'_{n, I_k})^{KU}\Bigr)\Bigr)^{\mspace{-3.5mu} hG/K}.\] In 
expression (\ref{long}), all the undefined notation is carefully explained in later sections, 
but to gain a fairly complete understanding of what spectrum (\ref{long}) is describing, 
it suffices to say that (a) in each of its applications above (and as mentioned 
just after Theorem \ref{firstmain}), 
$F^s_{G/K}(-)$ returns a profinite $G/K$-spectrum that is 
weakly equivalent to the $G/K$-spectrum that is its input and 
(b) morally, 
$P^q \Enik$ is ``the $q$th Postnikov section of the $G_n$-spectrum $E_n/I_k$," 
where 
\[\pi_\ast(E_n) \cong \lim_{k \geq 0} \pi_\ast(E_n)/I_k.\]

\begin{theorem}\label{thm:SS} 
Let $G$ and $K$ be as in Theorem \ref{iteratedthmEnintro}. 
There is a strongly convergent spectral sequence for iterated continuous homotopy fixed points
\[H_c^s(G/K; \pi_t(E_n^{hK})) \Rightarrow \pi_{t-s}(E_n^{hG}),\]
with $E_2$-term equal to the continuous cohomology of $G/K$ with coefficients 
the profinite $G/K$-module $\pi_t(E_n^{hK})$. 
This spectral sequence is isomorphic to the spectral sequences of 
$\mathrm{(\ref{lhsSS})}\negthinspace$ and 
$\mathrm{(\ref{davis2SS})}$, from the $E_2$-term 
onward. 
\end{theorem}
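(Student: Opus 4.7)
The plan is to produce the spectral sequence as an application of Theorem \ref{nextmain} to the presentation of $E_n$ constructed in the proof of Theorem \ref{iteratedthmEnintro}, and then to identify it with (\ref{lhsSS}) and (\ref{davis2SS}) by way of the equivalences $E_n^{dhG} \simeq E_n^{h'G} \simeq E_n^{hG}$ (and likewise with $G$ replaced by $K$) together with \cite[Theorem 7.6]{davis2}.

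First, the proof of Theorem \ref{iteratedthmEnintro} exhibits $E_n$, as a profinite $G$-spectrum, as a homotopy limit $\holim_\beta X_\beta$ of a diagram of $K$-Postnikov $G$-spectra built from the Postnikov sections $P^q \Enik$ appearing in (\ref{long}). Since $G_n$, and hence $G$ and $G/K$, is $p$-adic analytic, Lemma \ref{groupresult} guarantees that $G/K$ is strongly complete and has finite virtual cohomological dimension, so the hypotheses of Theorem \ref{nextmain} are satisfied. That theorem then yields a conditionally convergent spectral sequence
\[E_2^{s,t} = H^s_c(G/K; \pi_t(E_n^{hK})) \Rightarrow \pi_{t-s}(E_n^{hG}),\]
with $E_2$-term the continuous cohomology of $G/K$ with coefficients in the profinite $G/K$-module $\pi_t(E_n^{hK})$, which is the spectral sequence asserted in the statement.

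Second, to compare this spectral sequence with (\ref{davis2SS}), I would invoke the equivalences $E_n^{hK} \simeq E_n^{h'K}$ and $E_n^{hG} \simeq E_n^{h'G}$ from \cite[top of page 220]{hfplt}; these identify the two $E_2$-pages as isomorphic continuous $G/K$-modules and the two abutments as isomorphic graded groups. To promote this to an isomorphism of spectral sequences from $E_2$ onward, I would construct a map between the filtered objects that define the two spectral sequences, namely the Postnikov-tower/$\holim$ filtration on $E_n^{hK}$ implicit in (\ref{long}) and used via Theorem \ref{nextmain}, and the cosimplicial-tower filtration from \cite{davis2} that produces (\ref{davis2SS}), using the equivalence $E_n^{hK} \simeq E_n^{h'K}$ to translate between them and the naturality of $F^s_{G/K}$ to see that the filtrations are preserved. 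Composing this isomorphism with the isomorphism of \cite[Theorem 7.6]{davis2} between (\ref{davis2SS}) and (\ref{lhsSS}) yields the full three-way isomorphism. Strong convergence of the new spectral sequence is then inherited through this isomorphism from the strong convergence of (\ref{lhsSS}) and (\ref{davis2SS}).

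The principal obstacle is carrying out this comparison of spectral sequences from $E_2$ onward rather than merely at $E_2$. Matching $E_2$-pages is immediate once $\pi_t(E_n^{hK}) \cong \pi_t(E_n^{h'K})$ as continuous $G/K$-modules is established, but to match differentials one must exhibit a map of filtered towers giving rise to both spectral sequences. The analogous issue for the pair (\ref{lhsSS}), (\ref{davis2SS}) is the main technical content of \cite[Theorem 7.6]{davis2}, so the essential new work is the comparison between (\ref{davis2SS}) and our spectral sequence, where one must track how the Postnikov filtration on each $P^q \Enik$, after applying $\mathrm{Map}(EG,-)$, taking $KU$-fixed points, passing to the colimit over $U \vartriangleleft_o G$, and applying $F^s_{G/K}$, compares with the homotopy-limit filtration used to define $E_n^{h'K}$ as a continuous $G/K$-spectrum. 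Once this identification of filtrations (up to refinement) is verified, the rest of the argument is formal.
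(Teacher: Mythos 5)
Your first step is exactly the paper's: apply Theorem \ref{iteratedthm} to the diagram $\{\Enik\}_k$ of $K$-Postnikov $G$-spectra (with the group-theoretic hypotheses supplied by Lemma \ref{groupresult}), obtaining the conditionally convergent spectral sequence with the stated $E_2$-term. The overall architecture of your comparison step --- exhibit a map of the filtered objects giving rise to the two spectral sequences, compose with the isomorphism of \cite[Theorem 7.6]{davis2} between (\ref{davis2SS}) and (\ref{lhsSS}), and inherit strong convergence --- is also the paper's. But two concrete ingredients are missing, and you have flagged the second one yourself without supplying it.

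First, the spectral sequence of (\ref{davis2SS}) is by construction the inverse limit over $k$ of levelwise spectral sequences $E_r^{\ast,\ast}(K,G,k)$, so the comparison must be carried out one $k$ at a time; to do this you must first show that your spectral sequence is likewise $\lim_k E_r(k)$, where $E_r(k)$ is the descent spectral sequence abutting to $\pi_\ast((\Enik)^{hG})$. This decomposition is not automatic: it requires that $\pi_\ast((\Enik)^{hK})$ and $\pi_\ast((\Enik)^{hG})$ be degreewise finite, which the paper obtains from the chain of equivalences $(\Enik)^{hH} \simeq (\Enik)^{h_dH} \simeq E_n^{dhH} \wedge M_{I_k}$ (via Theorem \ref{profdisc}, \cite[Corollary 9.8]{davis}, and \cite[Theorem 8.2.1]{behrensdavis}) together with \cite[proof of Lemma 3.5]{devinatz2}. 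Second, the map of filtered towers: in the paper both spectral sequences are realized as homotopy spectral sequences of cosimplicial spectra indexed by $\Delta$, so the relevant filtration on both sides is the $\mathrm{Tot}$-filtration (not a Postnikov filtration on one side versus a cosimplicial one on the other; the $\holim_q$ over Postnikov sections is moved inside the cosimplicial object using \cite[Lemma 3.5]{hfplt}). The comparison map is then induced, for each fixed $k$, by the canonical maps $\Enik \to P^q\Enik$ together with the map $\colim_j \mathrm{Map}(EG, \Enik)^{K_j} \to F^s_{G/K}\bigl(\colim_j \mathrm{Map}(EG, \Enik)^{K_j}\bigr)$ from Theorem \ref{Gstablefinitecompletion}, giving an honest morphism of cosimplicial spectra from the object computing $E_r^{\ast,\ast}(K,G,k)$ (identified via \cite[(6.2)]{davis2} and \cite[Definition 1.5]{devinatzhopkins}) to $\holim_{q \geq 0} \mathrm{Map}_c\bigl((G/K)^{\bullet+1}, L^q_k\bigr)^{G/K}$; one then checks it is an isomorphism on $E_2$ using $\colim_j \mathrm{Map}(EG, \Enik)^{K_j} \simeq E_n^{dhK} \wedge M_{I_k} \simeq (\Enik)^{hK}$. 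An appeal to the bare spectrum-level equivalences $E_n^{hK} \simeq E_n^{h'K}$ and $E_n^{hG} \simeq E_n^{h'G}$ cannot by itself produce such a map of towers, so the step you call the ``principal obstacle'' is in fact the substance of the proof and remains to be carried out.
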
 

The proof of Theorem \ref{thm:SS} is in Section \ref{LT}. Though this theorem is clearly the result that one desires 
to more fully tie together $E_n^{dhG}$, $E_n^{hG}$ and $E_n^{h'G}$, its proof is 
quite intricate and a (very) brief road map might be useful: the proof can be described as
consisting of a chain of isomorphisms between spectral sequences.

\subsection{The importance of iterated homotopy fixed points in chromatic 
homotopy theory}
It is worth noting that iterated continuous homotopy fixed points for $E_n$ are 
not just of purely theoretical interest. For example, \cite[page 2883]{davis2} (building on 
\cite[page 133]{homotopydevinatz}) shows that 
certain instances of 
$(E_n^{h'K})^{h'G/K}$ play a useful role in the work of Devinatz \cite{homotopydevinatz, 
devinatzfinite} on the major 
conjecture in chromatic homotopy theory that $\pi_\ast\bigl(L_{K(n)}(\mathbb{S}^0)\bigr)$ 
is a module of finite type over $\mathbb{Z}_p$. Also, given a continuous epimorphism 
$G_n \to \mathbb{Z}_p$ of groups with kernel $K$ and the topological generator 
$1$ of $\mathbb{Z}_p$, \cite[Proposition 8.1]{devinatzhopkins} shows that 
a model for $(E_n^{dhK})^{h\mathbb{Z}_p}$, the 
continuous $\mathbb{Z}_p$-homotopy fixed points of $E_n^{dhK}$, is given 
by taking the homotopy fiber of the map \[E_n^{dhK} \xrightarrow{\mathrm{id}-1} E_n^{dhK}\] 
(this construction of the continuous $\mathbb{Z}_p$-homotopy fixed points is a special case 
of a well-known technique (for example, see \cite[\S 2.2]{ghm})), and this 
homotopy fiber sequence plays a role in constructing an interesting element in 
$\pi_{-1}\bigl(L_{K(n)}(\mathbb{S}^0)\bigr)$, for all $n$ and $p$ \cite[Theorem 6]{devinatzhopkins}. 

Other examples of the importance 
of $(E_n^{h'K})^{h'G/K}$ occur 
in \cite[end of \S 1.1]{westerland} and \cite[\S 5.5]{westerland}. In the 
last reference, a doubly iterated homotopy fixed point spectrum 
\[\Bigl((E_n^{h'K})^{\mspace{1mu}h'G/K}\Bigr)^{\mspace{-4mu}h'G_n/G}\] makes an appearance (we refer the 
reader to \cite[\S 2.2, Corollary 3.25, \S 5.5]{westerland} for the definitions of $K$ and $G$). Given 
these examples, we expect there to be situations where 
$(E_n^{hK})^{hG/K}$ will be a useful tool in chromatic theory.

\subsection{A technical advantage of our proof of Theorem \ref{iteratedthmEnintro} 
that is not possessed by the Davis and Devinatz-Hopkins proofs.}
We close our introduction by pointing out a subtle feature of our proof that 
$(E_n^{hK})^{hG/K} \simeq E_n^{hG}$ is always valid (Theorem \ref{iteratedthmEnintro}), a feature 
that is not enjoyed (i) by the proof in \cite{davis2} that 
$(E_n^{h'K})^{h'G/K} \simeq E_n^{h'G}$ always holds, or (ii) by the 
proof in \cite{devinatzhopkins} that when $G/K$ is finite, $(E_n^{dhK})^{hG/K} \simeq 
E_n^{dhG}$ (see \cite[Theorem 4, proof of Proposition 7.1]{devinatzhopkins}). To 
see this subtlety, we begin by noting that \cite[Corollary 5.5]{devinatzhopkins} implies that 
if $U$ is an open subgroup of $G_n$, then there is an equivalence
\begin{equation}\label{technical}
L_{K(n)}(E_n^{dhU} \wedge E_n) \simeq \textstyle{\prod}_{_{G_n/U}} E_n,
\end{equation} 
where the right-hand side is a finite product of $|G_n/U|$ copies of 
$E_n$. For our purposes here, we want to point out that the proof of (\ref{technical}) 
(see \cite[pages 24--30]{devinatzhopkins}) is highly nontrivial and, in particular, it 
uses (see \cite[page 29: the proof of Theorem 5.3]{devinatzhopkins}) the deep result due to Hopkins and Ravenel 
that there exists a finite spectrum $W$ with torsion-free $\mathbb{Z}_{(p)}$-homology, 
such that the continuous cohomology groups $H^{s, \ast}_c(U; \pi_\ast(E_n \wedge W)/I_n\pi_\ast(E_n \wedge W))$ vanish for all $s$ greater than some $s_0$ (see \cite[Lemmas 8.3.5--8.3.7]{ravenelorange}). It is worth noting that this result of Hopkins and Ravenel played a key role in 
the proof of the very important smashing conjecture (which states 
that $L_{E(n)}(-)$ is a smashing localization, where $E(n)$ is the Johnson-Wilson spectrum; 
see \cite[Theorem 7.5.6, Chapter 8]{ravenelorange}).

(To keep our explanation of the above point from being too long, the rest of our 
discussion is written in a style that assumes the 
reader has certain portions of \cite{davis2, devinatzhopkins} readily available.) 
  
The proof in \cite{devinatzhopkins} of the iteration result in (ii) above depends on 
(\ref{technical}). (The details for this assertion are as follows: \cite[proof of 
Proposition 7.1]{devinatzhopkins} uses the isomorphism in \cite[(6.5)]{devinatzhopkins}, 
and the proof of this isomorphism 
uses (\ref{technical}) (see the second equality in 
\cite[proof of Proposition 6.3]{devinatzhopkins}).) 
Similarly, 
the proof of the iteration 
result in (i) applies (\ref{technical}). (In detail: the result in (i) is \cite[Theorem 7.3]{davis2}; its proof depends on \cite[Lemma 7.1]{davis2}; the proof of this lemma uses 
\cite[Proposition 6.3]{devinatzhopkins}; and as noted above, the proof of this 
proposition uses (\ref{technical}).) 

However, the proof of Theorem \ref{iteratedthmEnintro} avoids using (\ref{technical}): 
this is not hard to see by going over the proof and by noting that in its use of the proof of 
Theorem \ref{fiterated} (see the proof of Theorem \ref{iteratedthm}), it is able to 
utilize \cite[proof of Lemma 4.9]{davis2}, thanks to the use of Postnikov towers (the properties 
of Postnikov towers that are relevant here are discussed in the first paragraph of 
Section \ref{usedinintro}). By contrast, the proof of the result in (i) (that is, \cite[proof of Theorem 7.3]{davis2}) also used \cite[proof of Lemma 4.9]{davis2}, but to do so, it needed to 
apply (\ref{technical}), as described above. 
Thus, the proof of well-behaved iteration for $E_n$ in the 
profinite setting has the interesting technical advantage that it is simpler than the proofs 
referred to in 
(i) and (ii), in that the proof in the profinite setting does not depend on the deep result of 
Hopkins and Ravenel. 

\subsection*{Acknowledgements} 
We thank Ethan Devinatz for helpful remarks about the proof of Corollary 
\ref{goodtool}, and the referee of the first version of this paper for helpful comments, including remarks that led to improvements in the paper's presentation. Also, we thank the referee of a revised 
version of this paper for useful comments, including one that 
simplified the argument in the paragraph after Remark \ref{wepausehere}.

\section{Spectra with a continuous $G$-action}
Everywhere in this paper, unless stated otherwise, $G$ denotes a profinite group. In this section we quickly review the 
categories of spectra with continuous $G$-action that we need for our work.
\subsection{Discrete $G$-spectra}
We summarize the most important properties of simplicial discrete $G$-sets and discrete $G$-spectra. More details can be found in \cite{goerss}, \cite{davis} and \cite{behrensdavis}.

A $G$-set $S$ is called discrete if the action is continuous when $S$ is given the discrete topology. This is equivalent to requiring that the stabilizer of any element in $S$ be an open subgroup in $G$ and to asking that $S$ be equal to the colimit of fixed points
\[
S=\colim_U S^U
\]
over the open subgroups $U$ of $G$. A simplicial discrete $G$-set is a simplicial object in the category of discrete $G$-sets. By defining morphisms as levelwise $G$-equivariant maps we obtain the category of simplicial discrete $G$-sets which we denote by $\Shg$.

In \cite[Theorem 1.12]{goerss}, Goerss showed that there is a model structure on $\Shg$ for which the cofibrations are the monomorphisms and the weak equivalences are the morphisms whose underlying maps of simplicial sets are weak equivalences in the standard model structure on the category $\Sh$ of simplicial sets. The category $\Shpg$ of pointed simplicial discrete $G$-sets inherits a model structure from $\Shg$ in the usual way: a map is a weak equivalence (cofibration, fibration) if its underlying map in $\Shg$ is a weak equivalence (cofibration, fibration, respectively). 

In order to stabilize the category $\Shpg$ we consider the category $\Spg$ of discrete $G$-spectra. An object $X$ of $\Spg$ consists of a sequence $\{X_n\}_{n\geq0}$ of pointed simplicial discrete $G$-sets $X_n$ together with structure maps
$$\sigma_n:S^1\wedge X_n \to X_{n+1}$$
in $\Shpg$, where we consider $S^1$ as a pointed simplicial discrete $G$-set with trivial $G$-action. A map $f:X\to Y$ of discrete $G$-spectra is a sequence of maps $f_n:X_n \to Y_n$ in $\Shpg$ which are compatible with the structure maps.  

The model structure on $\Shpg$ is left proper and cellular: see \cite{goerss} or \cite[Lemma 2.1.3]{behrensdavis}. This allows one to use Hovey's stabilization methods \cite{hovey} in order to prove the following theorem, as in \cite[Theorem 2.2.1]{behrensdavis}; the initial proof using presheaves of spectra was given in \cite{davis}.

\begin{theorem}
The category $\Spg$ admits a model structure in which a map is a weak equivalence (cofibration) if its underlying map of Bousfield-Friedlander spectra is a weak equivalence (cofibration). 
\end{theorem}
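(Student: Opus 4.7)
The plan is to apply Hovey's general stabilization machinery \cite{hovey} to the pointed model category $\Shpg$ equipped with the left Quillen endofunctor $T = S^1 \wedge (-)$, following the template already sketched in \cite[Theorem 2.2.1]{behrensdavis}. This produces a model structure on $\mathrm{Sp}(\Shpg) = \Spg$ in which the cofibrations and weak equivalences have the desired description.

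First I would verify the hypotheses needed to input $\Shpg$ into Hovey's construction: it must be pointed, left proper, and cellular, and $T$ must be a left Quillen endofunctor. The first three properties pass from $\Shg$ (established in \cite{goerss}; cellularity is spelled out in \cite[Lemma 2.1.3]{behrensdavis}) to $\Shpg$ in the standard way. The functor $T$ is left Quillen because $S^1$, viewed as a pointed simplicial discrete $G$-set with trivial $G$-action, is cofibrant, and smashing with a cofibrant object preserves both monomorphisms and weak equivalences (the latter being detected on underlying simplicial sets).

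Hovey's theorem then produces first a levelwise (projective) model structure on $\Spg$ and, by left Bousfield localization, a stable model structure. The cofibrations in both structures coincide, and the description of cofibrations in $\Shpg$ as monomorphisms forces a map $f : X \to Y$ to be a cofibration in $\Spg$ precisely when $f_0$ is a monomorphism and each pushout-product $X_{n+1} \sqcup_{S^1 \wedge X_n} S^1 \wedge Y_n \to Y_{n+1}$ is a monomorphism on underlying simplicial sets. This is exactly the Bousfield-Friedlander condition on the underlying spectrum of simplicial sets.

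The main obstacle is identifying Hovey's stable weak equivalences with stable equivalences of underlying Bousfield-Friedlander spectra. The argument I would use is to produce, for each $X \in \Spg$, a fibrant replacement $X \to \hat{X}$ in the stable structure whose levels $\hat{X}_n$ are fibrant pointed simplicial discrete $G$-sets and whose adjoint structure maps $\hat{X}_n \to \Omega \hat{X}_{n+1}$ are weak equivalences in $\Shpg$. Since a weak equivalence in $\Shpg$ is by definition a weak equivalence on underlying simplicial sets, such an $\hat{X}$ is automatically a Bousfield-Friedlander $\Omega$-spectrum, and the forgetful functor to Bousfield-Friedlander spectra therefore sends Hovey stable equivalences to ordinary stable equivalences. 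The converse uses the fact that the forgetful functor preserves and detects levelwise weak equivalences, so that after fibrant replacement the two notions agree. This parallels \cite[Theorem 2.2.1]{behrensdavis} and completes the identification.
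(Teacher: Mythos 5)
Your proposal is correct and follows essentially the same route as the paper, which proves this theorem by invoking Hovey's stabilization machinery applied to the left proper cellular model structure on $\Shpg$, exactly as in \cite[Theorem 2.2.1]{behrensdavis}. You have simply filled in the details (verification of Hovey's hypotheses, identification of cofibrations via the latching condition, and the $\Omega$-spectrum fibrant-replacement argument identifying the stable equivalences) that the paper delegates to the cited references.
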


\begin{remark}\label{picont}
For a discrete $G$-spectrum $X$, there is an induced action of $G$ on $\pi_*X$, such that each stable homotopy group $\pi_kX$ is a discrete $G$-module (see \cite[Corollary 3.12]{davis}). 
\end{remark}
\begin{remark} 
There is also a version of discrete $G$-spectra based on symmetric spectra: see \cite[\S 2.3]{behrensdavis}. But for the purposes of this paper it suffices to consider the model structure of the previous theorem on $\Spg$.
\end{remark}
\subsection{Mapping spectra I}\label{mappingone}
In order to study homotopy fixed points we will need the following notion of mapping spectrum. 
Let $T$ be any set. Then the set $\Map_c(G, T)$ of continuous functions $G \to T$, where $T$ is 
regarded as a space with the discrete topology, is 
a discrete $G$-set with $G$-action given by $(gf)(h)=f(hg)$. If $Y$ is a 
simplicial set, the mapping space $\Map_c(G,Y)$ is defined to be the simplicial discrete $G$-set given in degree $m$ by 
$$\Map_c(G,Y)_m =\Map_c(G,Y_m).$$

Now let $X$ be any spectrum. The continuous mapping spectrum 
$\Map_c(G,X)$ is defined to be the discrete $G$-spectrum whose $n$th space is  
$$\Map_c(G, X_n).$$ It is not hard to see that there is an isomorphism of spectra
\[\Map_c(G, X) \cong \colim_{N \vartriangleleft_o G} \textstyle{\prod}_{G/N} X,\] where the 
colimit is over the open normal subgroups of $G$. Also, 
if $X$ is a discrete $G$-spectrum, we again write $\Map_c(G, X)$ for the continuous 
mapping spectrum that is obtained as above, by just regarding $X$ as a spectrum. 

\subsection{Profinite $G$-spectra} 
A profinite space is a simplicial object in the category $\hEh$ of profinite sets. Together with levelwise continuous maps, profinite spaces form a category that is denoted by $\hSh$. If $\Sh$ denotes the category of simplicial sets, then the forgetful functor $|\cdot|:\hSh \to \Sh$ has a left adjoint $\compl:\Sh \to \hSh$ which we call profinite completion. There is a model structure on $\hSh$ for which the cofibrations are the monomorphisms and a weak equivalence is a map $f$ which induces isomorphisms on $\pi_0$, the profinite fundamental group and on continuous cohomology with finite local coefficients. We refer to \cite{gspaces} and \cite{gspectra} for the details. 
 
Let $S$ be a profinite set with a continuous map $\mu:G \times S \to S$ that satisfies the axioms of a group action. We call such an $S$ a profinite $G$-set. If $X$ is a profinite space and $G$ acts continuously on each $X_n$ such that the action is compatible with the structure maps, then we call $X$ a profinite $G$-space. We use $\hShg$ to denote the category of profinite $G$-spaces with $G$-equivariant maps of profinite spaces as morphisms. If $X$ is a pointed profinite space with a continuous $G$-action that fixes the basepoint, then we call $X$ a pointed profinite $G$-space. We denote the corresponding category by $\hShpg$. Also, we let $\hShp$ denote the 
category of pointed profinite spaces.

The category $\hShpg$ carries a fibrantly generated left proper simplicial model structure for which a map $f:X \to Y$ is a weak equivalence if and only if its underlying map is a weak equivalence in $\hShp$ and is a cofibration if and only if $f$ is a levelwise injection and the action of $G$ on $Y_n -f(X_n)$ is free for each $n \geq 0$. The corresponding homotopy category is denoted by $\hHhpg$. 

We would like to stabilize the category of pointed profinite spaces. Since the simplicial circle $S^1=\Delta^1/\partial \Delta^1$ is a simplicial finite set and hence an object in $\hShp$, we may stabilize $\hShp$ by considering sequences of pointed profinite spaces together with bonding maps for the suspension. 
In more detail, a profinite spectrum $X$ consists of a sequence of pointed profinite spaces $X_n \in \hShp$ and maps $\sigma_n:S^1 \wedge X_n \to X_{n+1}$ in $\hShp$ for $n\geq0$. A morphism $f:X \to Y$ of spectra consists of maps $f_n:X_n \to Y_n$ in $\hShp$ for $n\geq0$ such that $\sigma_n(1\wedge f_n)=f_{n+1}\sigma_n$. We denote by $\hSp$ the corresponding category of profinite spectra.
 
There is a stable simplicial model structure on the category $\hSp$. 
Also, the levelwise profinite completion functor is a left Quillen functor from Bousfield-Friedlander spectra $\Sp$ to $\hSp$.
  
A profinite $G$-spectrum $X$ is a sequence of pointed profinite $G$-spaces $\{X_n\}$ together with pointed $G$-equivariant maps $S^1\wedge X_n \to X_{n+1}$ for each $n\geq 0$, where $S^1$ is equipped with a trivial $G$-action. A map of profinite $G$-spectra $X \to Y$ is a collection of maps $X_n \to Y_n$ in $\hShpg$ compatible with the structure maps of $X$ and $Y$. 
The following theorem was proved in \cite{gspectra}.
\begin{theorem}\label{existsmodel}
There is a stable left proper simplicial model structure on $\hSpg$ in which a map between fibrant profinite $G$-spectra is an equivalence if and only if it is an equivalence in $\hSp$. 
\end{theorem}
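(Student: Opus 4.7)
The plan is to build the stable model structure in two steps: first a levelwise (projective) structure on $\hSpg$, and then a stabilization. Start with the observation that the fibrantly generated, left proper, simplicial model structure on $\hShpg$ induces a levelwise model structure on $\hSpg$, in which a map $f: X \to Y$ is a levelwise weak equivalence (resp.\ levelwise fibration) if and only if each $f_n : X_n \to Y_n$ is a weak equivalence (resp.\ fibration) in $\hShpg$, with cofibrations determined by the left lifting property against acyclic levelwise fibrations. Left properness, the simplicial enrichment, and fibrant generation pass from $\hShpg$ to this levelwise structure by a formal argument, parallel to the construction of the levelwise model structure on $\hSp$ in \cite{gspectra}.

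Next, declare an $\Omega$-spectrum in $\hSpg$ to be a levelwise fibrant $X$ whose adjoint bonding maps $X_n \to \Omega X_{n+1}$ are weak equivalences in $\hShpg$. Define a map $f$ to be a stable equivalence if $\mapp(f, Z)$ is a weak equivalence of pointed simplicial sets for every $\Omega$-spectrum $Z$ that is fibrant in the levelwise model structure; take the stable cofibrations to be the levelwise cofibrations; and let the stable fibrations be determined by the right lifting property against acyclic stable cofibrations. The stable fibrant objects will then be precisely the $\Omega$-spectra in $\hSpg$.

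The main obstacle is verifying the two factorization axioms for the stable structure. The difficulty is that $\hShpg$ is fibrantly generated rather than cofibrantly generated, so Hovey's stabilization machinery \cite{hovey} does not apply directly; one must instead adapt the construction used for $\hSp$ in \cite{gspectra}, where a dualized small-object-style argument is carried out using the fibrant generators and the cofiltered structure of profinite sets. Here this construction has to be redone $G$-equivariantly, using the fibrant generators of $\hShpg$ (which arise from the fibrant generators of $\hShp$ by taking $G$-orbits with free action on the complement of the basepoint, in accordance with the description of cofibrations in $\hShpg$). Left properness and the simplicial structure then descend formally from the levelwise structure, and stability of the homotopy category (i.e., that $S^1 \wedge (-)$ becomes an autoequivalence) holds because the localization has been engineered so that the adjunction unit $X \to \Omega(S^1 \wedge X)$ is a stable equivalence.

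For the final claim, a fibrant object of the stable structure on $\hSpg$ is in particular a levelwise fibrant $\Omega$-spectrum; hence a map $f: X \to Y$ between fibrant objects is a stable equivalence in $\hSpg$ if and only if each $f_n : X_n \to Y_n$ is a weak equivalence in $\hShpg$. By the very definition of weak equivalences in $\hShpg$ recalled above, this holds if and only if the underlying map of each $f_n$ is a weak equivalence in $\hShp$. Since the $\Omega$-spectrum and levelwise fibrancy conditions are also preserved when the $G$-action is forgotten, this is exactly the condition for $f$ to be a stable equivalence between fibrant objects of $\hSp$, completing the verification.
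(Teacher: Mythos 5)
The paper does not actually prove this theorem: it is recalled verbatim from \cite{gspectra} (``The following theorem was proved in \cite{gspectra}''), so there is no in-text argument to compare against. Your outline --- a levelwise projective structure on $\hSpg$ followed by a stabilization whose equivalences are detected by mapping spaces into $\Omega$-spectra, with the fibrant generation of $\hShpg$ forcing a dualized factorization argument in place of Hovey's cofibrantly generated machinery --- is consistent with the construction in that reference and with the characterization of stable equivalences that the present paper quotes from it (via projective cofibrant replacement and $\mathrm{map}_{\hSpg}(-,E)$ for $\Omega$-spectra $E$). Two places where your sketch leans on assertion rather than argument deserve flagging. First, the factorization axioms are the entire technical content of the theorem, and you defer them to ``adapting'' the very construction being established; as a self-contained proof this is circular, though it is no worse than what the paper itself offers. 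Second, the final biconditional needs the forgetful functor $\hSpg \to \hSp$ to preserve stably fibrant objects (so that a stable equivalence in $\hSp$ between the underlying spectra is detected levelwise), which you assert but do not check; it follows from the adjunction with $G_+ \wedge (-)$ together with the description of cofibrations in $\hShpg$ as levelwise injections with free $G$-action off the image, and should be stated as such.
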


In general, equivalences in $\hSpg$ do not have the nice characterization given in the above theorem, and we recall that an arbitrary morphism in $\hSpg$ is an equivalence exactly when any projective cofibrant replacement induces an equivalence of mapping spaces upon application of the functor \[\mathrm{map}_{\hSpg}(-, E) \colon \hSpg \to \Sh,\] 
for all $\Omega$-spectra $E$ in 
$\hSpg$ (we refer the reader to \cite{gspectra} for more detail). Also, to add a little to the use of equivalences in $\hSp$, in Theorem \ref{existsmodel}, we recall that an equivalence between fibrant objects in $\hSp$ is also an equivalence between fibrant objects in $\Sp$.

Finally, if $X$ is a profinite $G$-spectrum, then there is an induced action of $G$ on each stable profinite homotopy group $\pi_k(R_GX)$, where $R_G$ denotes a fibrant replacement functor for profinite $G$-spectra: this $G$-action is compatible with the profinite structure and each stable profinite homotopy group $\pi_k(R_GX)$ is a continuous profinite $G$-module. 
Thus, the topological $G$-module structure of $\pi_k(R_GX)$ reflects the 
character of $X$ as a profinite $G$-spectrum and, to ease our notation, we will write 
just $\pi_kX$ for this $G$-module. 
 
\subsection{Mapping spectra II}\label{htplimits} 

For a detailed discussion of continuous mapping spectra for profinite spectra and profinite $G$-spectra, we refer the reader to \cite{hfplt}. Here we summarize only the basic definitions.

For $X, Y \in \hShp$, the mapping space $\map_{\hShp}(X,Y)$ is defined to be the simplicial set whose set of $n$-simplices is given as the set of maps 
\[
\map_{\hShp}(X,Y)_n=\Hom_{\hShp}(X\wedge \Delta[n]_+, Y).
\] 
For $X, Y \in \hShpg$, the mapping space $\map_{\hShpg}(X,Y)$ is defined to be the simplicial set whose set of $n$-simplices is given as the set of maps 
\[
\map_{\hShpg}(X,Y)_n=\Hom_{\hShpg}(X\wedge \Delta[n]_+, Y)
\]
where $\Delta[n]_+$ is considered as a pointed profinite $G$-space with trivial $G$-action. 

Let $Y$ be a profinite space and $W$ be a pointed profinite space. The functor $\hSh \to \hShp$ that sends $Y$ to $Y_+$ (defined by adding a disjoint basepoint) is the left adjoint of the functor that forgets the basepoint. As in \cite{hfplt}, we will use the notation $\Map(Y,W)$ for the pointed simplicial set $\map_{\hShp}(Y_+,W)$ whose basepoint is the map $Y_+\to \ast \to W$. This defines a functor
\[
\Map(-,-): \hSh^{\op} \times \hShp \to \Shp.
\]

For a profinite space $Y$ and a profinite spectrum $X$, we denote by $\Map(Y, X)$ the spectrum whose $n$th space is given by the pointed simplicial set $\Map(Y,X_n)$. This defines a functor
$$\Map(-,-): \hSh^{\op} \times \hSp \to \Sp.$$

Now let $Y$ be a profinite $G$-space and let $W$ be a pointed profinite $G$-space. The pointed simplicial set $\Mapg(Y,W)$ is defined to be the pointed simplicial set $\map_{\hShpg}(Y_+,W)$ with basepoint equal to the map $Y_+\to \ast \to W$. This defines a functor
\[
\Mapg(-,-): \hShg^{\op} \times \hShpg \to \Shp.
\]

When $Y$ is a profinite $G$-space and $W$ is a pointed profinite $G$-space, we equip the pointed simplicial set $\Map(Y,W)$ with a $G$-action by defining $(gf)(y):= gf(g^{-1}y)$. With this $G$-action, $\Mapg(Y,W)$ is the pointed space of $G$-fixed points of the pointed  space $\Map(Y,W)$. 

If $Y$ is a profinite $G$-space and $X$ is a profinite $G$-spectrum, then $\Mapg(Y, X)$ is the spectrum whose $n$th space is given by the pointed simplicial set $\Mapg(Y,X_n)$. This construction yields a functor
\[
\Mapg(-,-): \hShg^{\op} \times \hSpg \to \Sp.
\]

The reader will have noticed that there are various mapping spectra ``out of a profinite group" appearing in the theories of discrete and profinite $G$-spectra. In fact, besides the constructions \[\mathrm{Map}_H(EG,X) = \mathrm{Map}(EG,X)^H\] for 
a profinite $G$-spectrum $X$ and a closed subgroup $H$ in $G$ (this mapping spectrum was seen in (\ref{long}); 
$EG$ is defined in Section \ref{rghg}) and 
$\mathrm{Map}_c(G,Z)$ for any spectrum $Z$, there is also the discrete $G$-spectrum $\mathrm{Map}_c^\ell(G, Z)$ -- its underlying spectrum is 
the same as that of $\mathrm{Map}_c(G, Z)$, but its $G$-action is induced by the $G$-action that 
is defined on each of the sets 
\[\mathrm{Map}_c(G,Z_n)_m = \mathrm{Map}_c(G, (Z_n)_m)\] 
by $(gf)(h) = f(g^{-1}h)$ (the notation ``$\mathrm{Map}_c^\ell(G, Z)$" follows that of 
\cite[page 328]{davis}; the motivation for this construction comes from \cite[Theorem 1(iii), Warning 4.19]{devinatzhopkins}) -- and the symmetric spectrum $\mathrm{Map}^c(G,Y)$, 
where $Y$ is any symmetric spectrum, that is defined in \cite[Section 2.4]{behrensdavis} (and used throughout \cite{behrensdavis}). 
Some of the relationships between the various constructions in the mini-topic of continuous mapping spectra ``out of $G$" are considered 
in \cite[Section 3.4 and page 219]{hfplt}.
 
\section{Comparing continuous homotopy fixed points}

In this section we recall the definition of homotopy fixed points for each of discrete and profinite $G$-spectra and show that they agree in certain cases where they are both defined. Our recollections start with the profinite case, for which details can be found in \cite{hfplt}. As usual $G$ denotes a profinite group.

\subsection{Homotopy fixed points of profinite $G$-spectra}\label{rghg}

A very convenient feature of the profinite approach is that the universal classifying space $EG$ of our profinite group (given, as usual, in degree $n$ by $G^{n+1}$) is naturally a profinite $G$-space. Thus, for a profinite $G$-spectrum $X$ it is possible to form the continuous mapping spectrum $\Mapg(EG,X)$. Moreover, $EG$ is a cofibrant profinite $G$-space, since $G$ acts freely in each degree, and hence, we can consider $EG$ as a cofibrant resolution of a point in $\hShg$. If $X$ is a fibrant profinite $G$-spectrum, then $\Mapg(EG,X)$ is a fibrant spectrum, giving a homotopically well-behaved version of the fixed points $\Mapg(\{\ast\},X)$. Thus, we let $R_G$ denote a fibrant replacement functor in $\hSpg$. In \cite{hfplt}, for any profinite $G$-spectrum $X$, the continuous homotopy fixed points of $X$ under $G$ were defined to be
\begin{equation}\label{profinhfp}
X^{hG}:= \Mapg(EG, R_GX),
\end{equation} 
and it was shown that if $X \to Y$ is an equivalence in $\hSpg$, then the induced map 
$X^{hG} \to Y^{hG}$ is an equivalence between $\Omega$-spectra in $\mathrm{Sp}(\mathcal{S}_\ast)$.

One advantage of the construction in (\ref{profinhfp}) is that the associated descent spectral sequence arises naturally from the filtration of $EG$ just as in the classical case for finite groups. 
This descent spectral sequence has the form 
\begin{equation}\label{hfpss}
E_2^{s,t}=H_c^s(G;\pi_tX) \Rightarrow \pi_{t-s}X^{hG}
\end{equation}
where the $E_2$-term is the continuous cohomology of $G$ with coefficients in the profinite $G$-module $\pi_tX$. 

One way to describe the above spectral sequence is as follows. Let $X$ be a fibrant profinite $G$-spectrum. We can consider $\Mapg(EG,R_GX)$ as coming from a cosimplicial spectrum $\Mapg(G^{\bullet+1},X)$ whose $n$th spectrum is $\Mapg(G^{n+1},X)$, where here, $G^{n+1}$ is the constant simplicial profinite $G$-space associated to the profinite group $G^{n+1}$. Then there is an equivalence
\[
X^{hG} \simeq \holim_{\Delta} \Mapg(G^{\bullet+1},X)
\]
and spectral sequence \eqref{hfpss} is isomorphic to the spectral sequence associated to the tower of spectra 
\[
\{\mathrm{Tot}_k\,(\Mapg(G^{\bullet+1},X))\}_k.
\] 
We refer the reader to \cite[\S 3]{hfplt} for the proofs and more details.

\subsection{Homotopy fixed points of discrete $G$-spectra}
For discrete $G$-spectra, the bad news is that, in general, $EG$ is not a simplicial discrete $G$-set. But the good news (for example, see \cite[Lemma 2.3, Corollary 2.4]{goerss}) is that a one-point space is a cofibrant object in $\Shg$. Thus, instead of using $EG$, the homotopy fixed points of a discrete $G$-spectrum $X$ are defined as the fixed points of a fibrant replacement $X_{f,G}$ in $\Spg$. As 
stated in the introduction, we denote these homotopy fixed points by $X^{h_dG}$ in order to distinguish them from the previous construction. Thus, as in \cite{davis}, we set
\[
X^{h_dG}:=(X_{f,G})^G.
\] 
A nice feature of this definition is that it is clear that homotopy fixed points are the right derived functor of the right Quillen functor $(-)^G: \Spg \to \Sp$, 
so that if $k \colon X \to Y$ is a 
weak equivalence of discrete $G$-spectra, then the map 
\[k^{h_dG} \colon X^{h_dG} = (X_{f,G})^G \xrightarrow{\,\simeq\,} (Y_{f,G})^G = Y^{h_dG}\] 
is an equivalence of spectra. In the definition of $k^{h_dG}$ above, $X_{f,G}$ and $Y_{f,G}$ are the output of a fibrant replacement functor 
$(-)_{f,G} \colon \mathrm{Sp}(\mathcal{S}_{\ast G}) \to 
\mathrm{Sp}(\mathcal{S}_{\ast G})$ 
for the model category of discrete $G$-spectra.

To obtain a descent spectral sequence in this setting, it is convenient to consider a description of homotopy fixed points that is different from the above definition. Let $\Gamma_G(-)$ be the endofunctor 
\[
X\mapsto \Map_c(G, X) =: \Gamma_G(X)
\] 
on discrete $G$-spectra (the object $\Map_c(G,X)$ is defined in Section \ref{mappingone}). 
The iterated application of $\Gamma_G(-)$ defines a cosimplicial object 
\[
(\Gamma_GX)^{\bullet} = \Map_c(G^{\bullet+1}, X)
\] in discrete $G$-spectra, with 
\[
(\Gamma_GX)^j\cong \Map_c(G^{j+1},X),
\] 
for each $j \geq 0$.

Whenever $G$ has finite virtual cohomological dimension and 
$X \overset{\simeq}{\longrightarrow} Y$ is a weak equivalence in $\Spg$ with 
$Y$ fibrant as a spectrum, it follows from \cite{davis} that there is an equivalence
\begin{equation}\label{dishfpmodel}
X^{h_dG} \simeq \holim_{\Delta} \Map_c(G^{\bullet+1}, Y)^G
\end{equation}
and a descent spectral sequence 
\begin{equation}\label{dishfpss}
E_2^{s,t}=H_c^s(G; \pi_tX) \Rightarrow \pi_{t-s}X^{h_dG}
\end{equation}
whose $E_2$-term is the continuous cohomology of $G$ with coefficients in the discrete $G$-module $\pi_tX$.
  
%
%
\subsection{Comparison of homotopy fixed points}
In this section, we show under mild assumptions on the profinite group $G$ that the two notions of continuous homotopy fixed points coincide in several situations in which they are both defined. 

\begin{defn}\label{fgspectrum}
A fibrant profinite $G$-spectrum $X$ is called an $f$-$G$-spectrum if each space $X_n$ is a simplicial finite discrete $G$-set.
\end{defn}

Let $X$ be an $f$-$G$-spectrum. Since $X$ is fibrant as a profinite $G$-spectrum, 
the homotopy groups of $X$ are all finite discrete $G$-modules, by \cite[proof of 
Proposition~3.9]{gspectra}. Thus, an $f$-$G$-spectrum is 
an $f$-spectrum in the sense of \cite[page~5]{browncomenetz} (that is, each homotopy group of $X$ is finite), which explains part of the motivation for the terminology of Definition \ref{fgspectrum}. Since $X$ is both a profinite and a discrete $G$-spectrum, we have our two different notions of continuous homotopy fixed points at hand. 
\begin{theorem}\label{profdisc}
Let $G$ be a profinite group with finite virtual cohomological dimension and let $X$ be an $f$-$G$-spectrum. There is an equivalence of spectra 
\begin{equation}\label{compmap}
X^{hG}\simeq X^{h_dG}.
\end{equation}
\end{theorem}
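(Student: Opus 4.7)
The plan is to express both $X^{hG}$ and $X^{h_dG}$ as homotopy limits of cosimplicial spectra that agree levelwise, so the two homotopy limits are equivalent. On the profinite side, I would use the description recalled in Section \ref{rghg}: since $X$ is already fibrant in $\hSpg$, we have
\[
X^{hG} \simeq \holim_\Delta \mathrm{Map}_G(G^{\bullet+1}, X).
\]
On the discrete side, the assumption that $G$ has finite virtual cohomological dimension is exactly what makes the description \eqref{dishfpmodel} available: choosing a weak equivalence $X \to Y$ in $\Spg$ with $Y$ fibrant as a Bousfield--Friedlander spectrum, one has
\[
X^{h_dG} \simeq \holim_\Delta \mathrm{Map}_c(G^{\bullet+1}, Y)^G.
\]

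The core step is then to produce a natural, cosimplicially coherent equivalence between $\mathrm{Map}_G(G^{k+1}, X)$ and $\mathrm{Map}_c(G^{k+1}, Y)^G$ in each codegree $k$. This splits naturally into two parts. First, I would show the isomorphism of spectra $\mathrm{Map}_G(G^{k+1}, X) \cong \mathrm{Map}_c(G^{k+1}, X)^G$: at each spectrum level $n$, $X_n$ is a simplicial finite discrete $G$-set, and a continuous map from the profinite set $G^{k+1}$ into a finite discrete target must factor through some quotient $(G/U)^{k+1}$ with $U \vartriangleleft_o G$, so the profinite and ``continuous-into-discrete'' conventions produce the same set of maps; the $G$-actions match as well, since both are given by the formula $(gf)(y) = g \cdot f(g^{-1}y)$ restricted to fixed points, and taking $G$-invariants turns the ``outer'' action on the source into $G$-equivariance. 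Second, I would use the weak equivalence $X \to Y$ to produce a levelwise equivalence $\mathrm{Map}_c(G^{k+1}, X)^G \xrightarrow{\,\simeq\,} \mathrm{Map}_c(G^{k+1}, Y)^G$; this relies on the fact that $\mathrm{Map}_c(G^{k+1},-)^G$ preserves weak equivalences of (sufficiently fibrant) spectra, which in turn can be reduced to the Quillen right-derived description and the colimit description $\mathrm{Map}_c(G,-) \cong \colim_{N\vartriangleleft_o G}\prod_{G/N}(-)$ recalled in Section \ref{mappingone}. Concatenating, one has a levelwise equivalence of cosimplicial spectra, whose $\holim$s are therefore equivalent, giving \eqref{compmap}.

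The step I expect to be the main obstacle is the second one above: namely, verifying that the comparison is genuinely a natural map of cosimplicial spectra (so that homotopy limits can be compared), and that $\mathrm{Map}_c(G^{k+1},-)^G$ does preserve the relevant weak equivalences when applied to the fibrant replacement $X \to Y$. The cosimplicial structure on the profinite side comes from the simplicial structure of $EG = G^{\bullet+1}$, while on the discrete side it comes from iterating the comonad $\Gamma_G(-) = \mathrm{Map}_c(G,-)$; both present the standard cobar construction computing continuous group cohomology, so they should be identified by direct inspection of coface and codegeneracy maps, using $(f_0, \ldots, f_k) \mapsto f(g_0, \ldots, g_k)$-style identifications. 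Once this naturality is in place and the finite virtual cohomological dimension of $G$ is invoked to justify \eqref{dishfpmodel}, the equivalence \eqref{compmap} follows by passing to $\holim_\Delta$.
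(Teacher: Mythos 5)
Your argument is essentially the paper's proof: both sides are written as homotopy limits of cosimplicial spectra --- $X^{hG} \simeq \holim_\Delta \mathrm{Map}_G(G^{\bullet+1},X)$ on the profinite side and, using finite virtual cohomological dimension to invoke \eqref{dishfpmodel}, $X^{h_dG} \simeq \holim_\Delta \mathrm{Map}_c(G^{\bullet+1},Y)^G$ on the discrete side --- and the comparison is exactly the codegreewise isomorphism $\mathrm{Map}_G(G^{\bullet+1},X) \cong \mathrm{Map}_c(G^{\bullet+1},X)^G$ of \eqref{cosimp}, which the paper cites from \cite[page 219; proof of Lemma 4.9]{hfplt}. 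The one step you single out as the main obstacle is not needed: an $f$-$G$-spectrum is by definition fibrant in $\hSpg$, so its underlying Bousfield--Friedlander spectrum is already fibrant and one may take $Y = X$ in \eqref{dishfpmodel}, eliminating both the fibrant replacement and the verification that $\mathrm{Map}_c(G^{\bullet+1},-)^G$ preserves the weak equivalence $X \to Y$.
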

\begin{proof}
By \cite{davis}, since $X$ is fibrant as a spectrum and $G$ has finite virtual cohomological dimension, we can use the homotopy limit $\holim_{\Delta} \Map_c(G^{\bullet+1}, X)^G$ as a model for  $X^{h_dG}$. 
There is an isomorphism of cosimplicial spectra 
\begin{equation}\label{cosimp}
\Map_G(G^{\bullet+1}, X) \cong \Map_c(G^{\bullet+1}, X)^G
\end{equation} 
(see \cite[page 219 (especially (16)); proof of Lemma 4.9]{hfplt}). 
By \cite[Proposition 3.23]{hfplt}, 
this shows that we have equivalences 
\[
X^{hG} \simeq \holim_{\Delta} \Mapg(G^{\bullet+1}, X) \simeq \holim_{\Delta} \Map_c(G^{\bullet+1}, X)^G.
\] 
Hence there is an equivalence of spectra 
$X^{hG} \simeq X^{h_dG},$ as desired. 
\end{proof}

\begin{remark}\label{arbitrary}
Let $X$ be any $f$-$G$-spectrum. It is worth noting that even when $G$ does 
not have finite virtual cohomological dimension, \cite[Theorem 3.5]{davisfibrant} shows that 
the spectrum ${\displaystyle{\colim_{N \vartriangleleft_o G} \bigl(\holim_\Delta 
\mathrm{Map}_c(G^{\bullet+1}, X)\bigr)^{\mspace{-2mu}N}}}$, a colimit over the open normal subgroups of $G$, is a fibrant discrete $G$-spectrum, and 
hence,
\begin{align*}X^{hG} & \simeq \holim_\Delta \mathrm{Map}_c(G^{\bullet+1}, X)^G 
\\ & \cong \Bigl(\,\colim_{N \vartriangleleft_o G} \bigl(\holim_\Delta 
\mathrm{Map}_c(G^{\bullet+1}, X)\bigr)^{\mspace{-2mu}N}\Bigr)^{\mspace{-3mu}G} \\ & \simeq 
\Bigl(\,\colim_{N \vartriangleleft_o G} \bigl(\holim_\Delta 
\mathrm{Map}_c(G^{\bullet+1}, X)\bigr)^{\mspace{-2mu}N}\Bigr)^{\mspace{-3mu}h_dG},\end{align*} 
so that $X^{hG}$ can always be regarded as being the $G$-homotopy 
fixed points of some discrete $G$-spectrum.
\end{remark}

After a few preparatory comments, we recall a theorem that gives an example of a way that $f$-$G$-spectra arise. We call a spectrum $X\in \Sp$ a $G$-spectrum (without taking any topology into account) if each space $X_n$ is a pointed $G$-space and the $G$-actions are compatible with the bonding maps $S^1\wedge X_n \to X_{n+1}$. 

\begin{defn}\label{pifinitedef}
A $G$-spectrum $Z$ is called $\pi$-finite if all its homotopy groups are finite.  
\end{defn}

Let $X$ be an arbitrary $f$-$G$-spectrum: since 
the homotopy groups of $X$ are all finite discrete $G$-modules, the underlying $G$-spectrum of $X$ is $\pi$-finite. With respect to this conclusion, the following converse was proved in \cite{gspectra}, Theorem 5.15, for the case when $G$ is strongly complete (that is, if every subgroup of finite index is open in $G$).
\begin{theorem}\label{Gstablefinitecompletion}
Let $G$ be a strongly complete profinite group and let $X$ be a $\pi$-finite $G$-spectrum. Then there is a $G$-equivariant map 
$$\varphi^s: X \to F^s_GX$$ 
of spectra from $X$ to an $f$-$G$-spectrum $F^s_GX$ such that $\varphi^s$ is a stable equivalence of underlying spectra.  

The assignment $X\mapsto F_G^sX$ is functorial in the sense that given a $G$-equivariant map $h:X\to Y$ between $\pi$-finite $G$-spectra, there is a map $F_G^s(h)$ in $\hSpg$ such that the 
diagram 
$$\xymatrix{
X\ar[d] \ar[r]^h & Y \ar[d] \\
F_G^sX \ar[r]_{F_G^s(h)} & F_G^sY}$$ 
of underlying spectra commutes.
\end{theorem}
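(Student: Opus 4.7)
The plan is to build $F_G^s X$ in three stages: a preliminary fibrant replacement of $X$ as a $G$-spectrum, a $G$-equivariant approximation of each level by a simplicial finite discrete $G$-set (made possible by $\pi$-finiteness together with strong completeness), and finally a fibrant replacement in $\hSpg$. First I would apply a functorial fibrant replacement functor for Bousfield--Friedlander spectra levelwise to $X$; because the replacement is functorial, the $G$-action on $X$ transfers to a $G$-action on the resulting $\Omega$-spectrum $X'$, and each $X'_n$ is a Kan pointed $G$-space. Since $X$ is $\pi$-finite, each homotopy group $\pi_k(X'_n)$ that contributes to a stable homotopy group of $X$ is a finite abelian $G$-module. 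Strong completeness of $G$ enters here essentially: the kernel of $G \to \Aut(\pi_k(X'_n))$ has finite index, hence is open, and so the action factors through a finite quotient $G/U$ with $U \vartriangleleft_o G$.

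Next I would approximate each $X'_n$ by a simplicial finite discrete $G$-set $Y_n$ using a Postnikov-type construction. For a pointed Kan $G$-space with finitely many nontrivial finite homotopy groups, a minimal Postnikov tower built from $K(\pi,m)$'s with finite $\pi$ produces a weakly equivalent simplicial finite set, and the control over the $G$-actions on each $\pi_k$ from the previous step ensures that each layer, and hence $Y_n$, is a simplicial finite discrete $G$-set. Carrying out this construction functorially in $X'_n$ and compatibly with the bonding maps $S^1 \wedge X'_n \to X'_{n+1}$ (with $S^1$ carrying trivial $G$-action) yields equivariant bonding maps $S^1 \wedge Y_n \to Y_{n+1}$ and hence a profinite $G$-spectrum $Y$. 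I then define $F_G^s X$ to be a functorial fibrant replacement of $Y$ in the model category $\hSpg$ provided by Theorem \ref{existsmodel}; the composite $\varphi^s \colon X \to X' \to Y \to F_G^s X$ is $G$-equivariant, and it is a stable equivalence of underlying spectra because each of the three arrows is. Functoriality in $X$ follows since every constituent construction is functorial.

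The principal obstacle is the second step: producing the simplicial finite discrete $G$-models $Y_n$ so that both the $G$-equivariance and the bonding maps are preserved in a single functorial construction. The existence of a weak equivalence $X'_n \to Y_n$ to a simplicial finite set is classical from the Postnikov theory of $\pi$-finite spaces; the novelty is making the $G$-action on $Y_n$ discrete (rather than merely continuous in a profinite sense) and making the whole assembly functorial in $X$. Strong completeness is exactly what identifies continuous and discrete $G$-actions on finite sets uniformly across all simplicial degrees and all spectrum levels, and so is the input that makes the functorial construction of $Y$ go through. Everything else --- levelwise fibrant replacement, levelwise Postnikov truncation, and fibrant replacement in $\hSpg$ --- is already available functorially, so once $Y$ has been constructed the remainder of the proof is formal.
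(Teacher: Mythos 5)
First, a remark on what you are being measured against: the paper does not prove Theorem \ref{Gstablefinitecompletion} at all --- it is recalled verbatim from \cite[Theorem 5.15]{gspectra} --- so the only question is whether your argument stands on its own. Its overall architecture (preliminary fibrant replacement, then levelwise finite discrete $G$-models exploiting strong completeness, then fibrant replacement in $\hSpg$ via Theorem \ref{existsmodel}) has the right shape, and you correctly locate the role of strong completeness: it forces the finite-index kernels of $G \to \Aut(\pi_k)$, and of $G$ acting on any finite set, to be open, so that abstract actions on finite sets are automatically discrete.

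However, the central step is asserted rather than proved, and the specific device you propose cannot deliver it. Minimal complexes and minimal Postnikov towers are not functorial (they are unique only up to non-canonical isomorphism), so a construction of $Y_n$ based on them cannot produce the functor $X \mapsto F^s_GX$ demanded by the second half of the theorem; indeed it cannot even produce a single coherent $G$-action on $Y_n$, since equivariance is itself an instance of naturality (with respect to the self-maps $g\colon X'_n \to X'_n$), and ``choose a minimal model and transport the action'' yields an action only up to homotopy. In addition: (i) your finiteness claim is stated for spaces with \emph{finitely many} nontrivial homotopy groups, but the levels $X'_n$ of an $\Omega$-spectrum associated to a $\pi$-finite $X$ generally have infinitely many nonzero (finite) homotopy groups, so you would need the sharper observation that a minimal Kan complex all of whose homotopy groups are finite is still degreewise finite --- true, via coskeleta, but not what you invoked; and (ii) strict compatibility with the bonding maps requires a natural map $S^1 \wedge Y(-) \to Y(S^1 \wedge -)$, i.e.\ some compatibility of the replacement functor with suspension, which a levelwise-independent choice of models does not provide. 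These are precisely the difficulties that the functorial completion construction of \cite{gspectra} is designed to overcome, so your proposal, as written, reduces the theorem to its hardest part and then declares that part formal.
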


In Theorem \ref{Gstablefinitecompletion}, if the map $h$ is a stable 
equivalence of spectra, then the map $F^s_G(h)$ is too, and hence, 
$F^s_G(h)$ is a weak equivalence of discrete $G$-spectra. However, 
when the map $h$ is a stable equivalence of spectra, it is not 
known that $F^s_G(h)$ is a weak equivalence of profinite $G$-spectra 
(nevertheless, there is still the equivalence in (\ref{itturnsoutyes!}) below).
                            
Theorem \ref{Gstablefinitecompletion} 
motivates the following definition.
\begin{defn}\label{twodefs}
Let $G$ be a strongly complete profinite group and let $X$ be a $\pi$-finite $G$-spectrum. 
Then both $(F^s_GX)^{hG}$ and $(F^s_GX)^{h_dG}$ can be formed and it is natural to 
define
\[X^{hG}:= (F^s_GX)^{hG}\] and 
\[X^{h_dG}:= (F^s_GX)^{h_dG}.\] 
\end{defn}

Each of the two notions of homotopy fixed 
points in Definition \ref{twodefs} 
have a homotopy invariance property: whenever $G$ is a 
strongly complete profinite group and $h \colon X \to Y$ 
is a $G$-equivariant map between $\pi$-finite $G$-spectra that 
is a stable equivalence of spectra, then the map
\begin{equation}\label{itturnsoutyes!}
(F^s_G(h))^{hG} \colon X^{hG} = (F^s_GX)^{hG} \xrightarrow{\,\simeq\,} 
(F^s_GY)^{hG} = Y^{hG}
\end{equation} 
is an equivalence (the verification that this map 
is an equivalence is delayed until Remark 
\ref{given_later} since the argument depends on some material 
that is developed later) and, since 
$F^s_G(h)$ is a weak equivalence of discrete $G$-spectra, the map 
\[(F^s_G(h))^{h_dG} \colon 
X^{h_dG} = (F^s_GX)^{h_dG} \xrightarrow{\,\simeq\,} (F^s_GY)^{h_dG} = Y^{h_dG}\] is an equivalence. 

The following result, which is immediate from Theorem \ref{profdisc}, 
describes a case when the two notions of 
homotopy fixed points in Definition \ref{twodefs} 
are equivalent to each other.

\begin{cor}\label{gotitinthiscase}    
Let $G$ be a strongly complete profinite group with finite virtual cohomological 
dimension and let $X$ be a $\pi$-finite $G$-spectrum. Then there is an equivalence
\[
X^{hG} \simeq X^{h_dG}.
\] 
\end{cor}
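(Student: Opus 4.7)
The plan is to observe that the corollary is a direct unfolding of Definition \ref{twodefs} followed by an immediate application of Theorem \ref{profdisc}. More precisely, by Definition \ref{twodefs}, the two notions of homotopy fixed points for a $\pi$-finite $G$-spectrum $X$ are set up to be
\[
X^{hG} = (F^s_G X)^{hG}, \qquad X^{h_dG} = (F^s_G X)^{h_dG},
\]
where $F^s_G X$ is furnished by Theorem \ref{Gstablefinitecompletion}. So the entire task reduces to comparing the two notions of continuous homotopy fixed points of the single object $F^s_G X$.

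The key point is that $F^s_G X$ is an $f$-$G$-spectrum, again by Theorem \ref{Gstablefinitecompletion}: it is fibrant in $\hSpg$ and each of its levels is a simplicial finite discrete $G$-set. This is exactly the class of objects to which Theorem \ref{profdisc} applies, provided that $G$ has finite virtual cohomological dimension. Since the hypotheses of the corollary give both that $G$ is strongly complete (so that $F^s_G X$ is defined in the first place) and that $G$ has finite virtual cohomological dimension (so that Theorem \ref{profdisc} applies), there is nothing left to verify: Theorem \ref{profdisc} yields
\[
(F^s_G X)^{hG} \simeq (F^s_G X)^{h_dG},
\]
which upon substituting the definitions above produces the desired equivalence $X^{hG} \simeq X^{h_dG}$.

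There is no real obstacle here, which is why the text calls the corollary ``immediate from Theorem \ref{profdisc}''. The only conceptual point worth emphasizing in the write-up is that the definitions of $X^{hG}$ and $X^{h_dG}$ in Definition \ref{twodefs} are built on a common underlying object $F^s_G X$, so that after this substitution the corollary is not a new comparison statement but literally an instance of Theorem \ref{profdisc}. The strong-completeness hypothesis enters only to guarantee that $F^s_G X$ exists, while the finite virtual cohomological dimension hypothesis plays the role it did in Theorem \ref{profdisc}, namely ensuring that the discrete-$G$-spectrum model $\holim_\Delta \mathrm{Map}_c(G^{\bullet+1}, F^s_G X)^G$ computes $(F^s_G X)^{h_dG}$ and hence matches the cosimplicial description of $(F^s_G X)^{hG}$ via the isomorphism in \eqref{cosimp}.
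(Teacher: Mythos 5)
Your proposal is correct and matches the paper's intended argument exactly: the paper states the corollary as ``immediate from Theorem \ref{profdisc}'', and your unfolding of Definition \ref{twodefs} to reduce everything to the $f$-$G$-spectrum $F^s_G X$, followed by an application of Theorem \ref{profdisc}, is precisely that argument. Your remarks on which hypothesis does what (strong completeness furnishes $F^s_G X$ via Theorem \ref{Gstablefinitecompletion}; finite virtual cohomological dimension powers Theorem \ref{profdisc}) are accurate.
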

  

%
%
\section{Iterated homotopy fixed point spectra in the profinite setting}\label{four}
\subsection{Recollections of basic facts and the main problem}\label{fourone} 

We begin by recalling some material about iterated 
homotopy fixed points from \cite{hfplt}. Let $K$ be a closed subgroup of $G$ and let $N(K)$ be the normalizer of $K$ in $G$. Also, let $X$ be any fibrant profinite $G$-spectrum. The composition 
\begin{equation}\label{identification}
\mathrm{Map}_K(EG, X) \overset{\simeq}{\longrightarrow} \mathrm{Map}_K(EK, 
X) \overset{\simeq}{\longrightarrow} \mathrm{Map}_K(EK, R_KX) = X^{hK}
\end{equation}
of weak equivalences of spectra shows that it is natural to make the identification 
\[
X^{hK} = \mathrm{Map}_K(EG, X).
\] 
With this identification, it is clear that the profinite quotient group $N(K)/K$ acts on $X^{hK}$. Note that by setting $K=G$, our discussion shows that there 
is the useful identification 
\[
X^{hG} = \mathrm{Map}_G(EG, X).
\]  
 
To simplify our notation, we now assume that $K$ is a closed normal subgroup of $G$. There is a canonical map 
\[
X^{hG} \to X^{hK}
\] 
that is defined by 
\[
X^{hG} = \mathrm{Map}_G(EG, X) = \mathrm{Map}(EG, X)^G \to \mathrm{Map}(EG, X)^K = X^{hK},
\] 
and it is easy to see that this map factors into the identity map
\[
X^{hG} = \mathrm{Map}(EG, X)^G \overset{=}{\longrightarrow} \bigl(\mathrm{Map}(EG, X)^K\bigr)^{G/K} = (X^{hK})^{G/K}
\] 
followed by the natural inclusion 
$\bigl(\mathrm{Map}(EG, X)^K\bigr)^{G/K} \to X^{hK}$. 
When the $G/K$-spectrum $X^{hK}$ is a profinite $G/K$-spectrum, then 
composition of the above identity map with the canonical map
\[
(X^{hK})^{G/K} \to \mathrm(X^{hK})^{hG/K}
\] 
yields a map 
\begin{equation}\label{iteratedmap}
X^{hG} \to (X^{hK})^{hG/K}.
\end{equation} 
Thus, a natural problem in the theory of profinite $G$-spectra is to show that whenever 
$X^{hK}$ is a profinite $G/K$-spectrum, map (\ref{iteratedmap}) to 
the iterated continuous homotopy fixed points $(X^{hK})^{hG/K}$ is an equivalence. 

The first issue in studying map (\ref{iteratedmap}) 
is that $X^{hK}$ does not in general carry the structure of a {\em profinite} $G/K$-spectrum. 
This observation relies on the fact that the set of continuous maps between two profinite sets is in general not a profinite set itself. But there are interesting cases when it can be 
shown that (\ref{iteratedmap}) is an equivalence.

To understand the simplest case, we consider the situation when $K$ is open in $G$, so 
that the group $G/K$ is finite. 
Whether or not $X^{hK}$ is a profinite $G/K$-spectrum, since 
$G/K$ is a finite discrete space, the fibrant spectrum 
$X^{hK}$ is automatically a discrete $G/K$-spectrum, and hence, 
there is always the canonical map \[X^{hG} \to F(E(G/K)_+, X^{hK})^{G/K} = 
(X^{hK})^{h_dG/K}\] (as written above, $(X^{hK})^{h_dG/K}$ is equal  
to the ``usual" homotopy fixed point spectrum for the discrete 
group $G/K$; we remark that in 
forming this spectrum, no fibrant replacement of $X^{hK}$ is needed 
since $X^{hK}$ is already a fibrant spectrum) that is defined to be the composition of the aforementioned identity 
map with the canonical map 
$(X^{hK})^{G/K} \to (X^{hK})^{h_dG/K}$.
Then an adjunction argument suffices to prove that the map 
$X^{hG} \to (X^{hK})^{h_dG/K}$ 
is an equivalence (see \cite{hfplt}, \S 3.5). 

Furthermore, 
if $X^{hK}$ is a fibrant profinite $G/K$-spectrum, then 
the fibrant replacement map $X^{hK} \to R_{G/K} X^{hK}$ of profinite $G/K$-spectra, when regarded 
as a map of spectra, is a weak equivalence between fibrant 
objects that is $G/K$-equivariant, and hence, the map 
$(X^{hK})^{G/K} \to (X^{hK})^{h_dG/K}$ can be identified with the map
\[(X^{hK})^{G/K} \to \mathrm{Map}_{G/K}(E(G/K), R_{G/K}X^{hK}) = (X^{hK})^{hG/K}.\] 
Therefore, whenever $X^{hK}$ is a fibrant profinite $G/K$-spectrum, 
the weak equivalence $X^{hG} \overset{\simeq}{\longrightarrow} (X^{hK})^{h_dG/K}$ 
can be identified with map (\ref{iteratedmap}) to the iterated continuous 
homotopy fixed point spectrum $(X^{hK})^{hG/K}$, giving our first case of when (\ref{iteratedmap}) 
is a weak equivalence. 

\subsection{Some definitions and observations that help with understanding the problem of iteration}\label{fourtwo}

Now we give some preliminary 
considerations that lead to our second case of when map (\ref{iteratedmap}) is an equivalence. 
 
\begin{defn}\label{promodel}
Let $G$ be a profinite group and $X$ a $G$-spectrum. 
We say that $X$ has a model in the category of profinite $G$-spectra $\mathrm{(}$or that $X$ has a profinite $G$-model $X'\,\mathrm{)}$ 
if there exists a fibrant profinite $G$-spectrum $X'$ and a zigzag $\mathrm{(}$of any finite length$\mathrm{)}$ of $G$-equivariant morphisms of 
$G$-spectra between $X$ and $X'$, 
such that each underlying morphism of Bousfield-Friedlander spectra is a weak equivalence. For example, if the three arrows in the diagram \[Y_0 \xrightarrow{\,\simeq\,} Y_1 \xleftarrow{\,\simeq\,} 
Y_2 \xrightarrow {\,\simeq\,} Y_3\] denote $G$-equivariant maps between $G$-spectra, with each a weak equivalence of spectra, then $Y_0$ has a profinite $G$-model $Y_3$ when $Y_3$ is a fibrant profinite $G$-spectrum, and $Y_3$ has a 
profinite $G$-model $Y_0$ when $Y_0$ is a fibrant profinite $G$-spectrum.
\end{defn}
\begin{remark}
Let $G$ and $X$ be as in Definition \ref{promodel}. 
Since the homotopy groups of a fibrant profinite $G$-spectrum 
are profinite $G$-modules, 
when the homotopy groups $\pi_tX$ are not profinite $G$-modules, then the model described above cannot exist for $X$. Also, we point out that if $G$ is strongly 
complete and $X$ is $\pi$-finite, then the discrete $G$-spectrum 
$F^s_GX$ is a model for $X$ as a profinite $G$-spectrum.
\end{remark}
\begin{defn}\label{itdef}
Let $G$ be a profinite group and $K$ a closed normal subgroup of $G$. Let $X$ be a fibrant profinite $G$-spectrum such that $X^{hK}$ has a profinite $G/K$-model $X'(K)$.
Then we define the iterated continuous homotopy fixed points of $X$ to be 
\[
(X^{hK})^{hG/K}:=(X'(K))^{hG/K}.
\]
\end{defn}

\begin{remark}\label{zigzags}
Definitions \ref{promodel} and \ref{itdef} motivate the following question: if $G$ is a profinite group and $X$ is a $G$-spectrum that has $X'$ and $X''$ 
as profinite $G$-models, are $(X')^{hG}$ and $(X'')^{hG}$ weakly equivalent? 
Though we do not have an answer to this general question, we are able to 
handle it in the following special case: if $G$ is strongly complete and 
$X'$ and $X''$ are $f$-$G$-spectra (thus, $X$, $X'$, and $X''$ are 
$\pi$-finite $G$-spectra), 
then there is an equivalence \[(X')^{hG} \simeq (X'')^{hG}.\] For the subject 
matter of this paper, this assertion is quite pertinent: for example, it is applicable 
to situations that could occur in the settings of 
part (a) of Definition \ref{postnikovK} when $G/K$ is strongly complete, 
Theorem \ref{postnikovresult}, and Remark \ref{wepausehere}. 
It is easy to see that a proof of this special case follows from the technique used 
in the following justification of a very particular instance of it. Given 
$G$ strongly complete and $f$-$G$-spectra $X'$ and $X''$, we 
suppose additionally that $X'$ and $X''$ are models via the existence 
of the zigzag 
\[\xymatrix{X' & X \ar[l]^-{\,\simeq\,}_-{h_1} \ar[r]_-{\,\simeq\,}^-{h_2} & X''}\] 
of stable equivalences of spectra that are $G$-equivariant. It follows that 
there is the commutative diagram
\[\xymatrixcolsep{3pc}\xymatrix{
X' \ar[d]^-\simeq_-{f_1} 
& X \ar[d]^-\simeq_-{f_2} 
\ar[r]_-\simeq^-{h_2} \ar[l]^-\simeq_-{h_1} & X'' \ar[d]^-\simeq_-{f_3} 
\\ 
F^s_GX' 
& F^s_GX   
\ar[r]_-\simeq^-{F^s_G(h_2)} \ar[l]^-\simeq_-{F^s_G(h_1)} & F^s_GX'' 
\rlap{,}}\] in which the vertical maps are the usual ones obtained by 
applying $F^s_G$ and every map is $G$-equivariant and a stable equivalence 
of spectra. Since the four spectra in the corners of the diagram are 
$f$-$G$-spectra, they are discrete $G$-spectra, so that 
$f_1$ and $f_3$ are morphisms of profinite $G$-spectra. 
Therefore, to each of the four morphisms 
$f_1$, $f_3$, $F^s_G(h_1)$, and $F^s_G(h_2)$, we can 
apply the following fact: if $W \to Z$ is a map 
of profinite $G$-spectra and a stable equivalence of spectra, with $W$ and $Z$ 
$f$-$G$-spectra, then the induced map $W^{hG} \to Z^{hG}$ is a stable equivalence (for a justification of this fact, we refer the reader to the proof of a 
more general result in Remark 
\ref{moregeneral}). We find that there is the 
zigzag
\[(X')^{hG} \xrightarrow{\,\simeq\,} (F^s_GX')^{hG} \xleftarrow{\,\simeq\,} (F^s_GX)^{hG} \xrightarrow{\,\simeq\,} (F^s_GX'')^{hG} \xleftarrow{\,\simeq\,}(X'')^{hG}\] of equivalences, which yields the desired conclusion.
\end{remark}

As at the beginning of \S \ref{fourone}, we let $X$ be an arbitrary fibrant profinite 
$G$-spectrum. 
By letting $\{U_j\}_j$ be the collection of open normal subgroups of $G$, 
we can write $G=\lim_j G/U_j$. 
For each $j$, we let \[K_j:=KU_j,\] 
an open normal subgroup of $G$. For indices $i,j$ such that $U_j\subset U_i$, there is the canonical surjection $G/K_j\to G/K_i$, the natural $G/K_j$-equivariant map 
\[
X^{hK_i} = \mathrm{Map}(EG, X)^{K_i} \to \mathrm{Map}(EG, X)^{K_j} = X^{hK_j}
\] between fibrant spectra, and the isomorphism $G/K \cong \lim_j G/K_j$. Also, there is the induced map 
\[(X^{hK_i})^{h_dG/K_i} \overset{\simeq}{\longrightarrow} (X^{hK_j})^{h_dG/K_j}\] that 
is equal to the canonical composition
\[
(X^{hK_i})^{h_dG/K_i}
\to
F(E(G/{K_i})_+, X^{hK_i})^{G/{K_j}}
\to
F(E(G/{K_j})_+, X^{hK_j})^{G/{K_j}} 
\] 
and is a weak equivalence (for each $j$, since $G/K_j$ is a finite group, 
$(X^{hK_j})^{h_dG/K_j}$ is equivalent to $X^{hG}$, by \cite{hfplt}, as discussed earlier). Hence by taking the colimit over all $j$, 
we obtain an equivalence
\[
X^{hG} \simeq \colim_j (X^{hK_j})^{h_dG/K_j}.
\]
Hence, if $X^{hK}$ is a profinite $G/K$-spectrum and the colimit on the right-hand side above 
is equivalent to $(X^{hK})^{hG/K}$ (this is plausible if there is an equivalence of the 
right-hand side with $F(\lim_j E(G/K_j)_+, \colim_j X^{hK_j})^{G/K}$), then it would follow that (\ref{iteratedmap}) is an equivalence for the closed normal subgroup $K$. Unfortunately, this is not always the case.  
\begin{remark}\label{hyperfibrant}
For the 
duration of this remark (and the next), we let $K$ be an arbitrary closed subgroup of $G$. 
A problem similar to what was just described 
above occurs for discrete $G$-spectra: in \cite{davis2} a discrete $G$-spectrum $Y$ is called hyperfibrant if the canonical 
map $\colim_j Y^{h_dK_j} \to Y^{h_dK}$ is an equivalence for all $K$. In [loc.\,cit.], it was shown that if $Y$ is a hyperfibrant discrete $G$-spectrum and 
$H$ is any closed subgroup of $G$ that is normal in $K$, then the identification 
$Y^{h_dH} = \colim_j Y^{h_dHU_j}$ makes $Y^{h_dH}$ a discrete $K/H$-spectrum, 
and hence, $(Y^{h_dH})^{h_dK/H}$ is defined, and if $K$ is normal in $G$, then 
\[(Y^{h_dK})^{h_dG/K} \simeq Y^{h_dG}.\] 
\end{remark}

\begin{remark}
As in Remark \ref{hyperfibrant}, let $K$ be any closed subgroup of $G$. We recall how a version 
of the issue discussed above is handled in the corresponding ``pro-setting" in 
\cite[Section 11.1]{fausk}. If $Y$ is a pro-$G$-spectrum, in addition to considering 
the $K$-homotopy fixed point 
pro-spectrum $Y^{hK}$, Fausk uses a technique that was reviewed in Remark \ref{hyperfibrant}: he defines the $K$-$G$-homotopy fixed point pro-spectrum $Y^{h_GK}$ to 
be $\hocolim_j (Y_f)^{hK_j}$,
where $Y_f$ is a fibrant replacement of $Y$. 
Then Fausk shows that when $K$ is normal in $G$, 
there is an equivalence \[(Y^{h_GK})^{hG/K} \simeq Y^{hG}\] in the Postnikov 
model structure on pro-spectra. In \cite[Lemma 11.4]{fausk}, Fausk describes a situation 
when $Y^{h_GK}$ can be identified with $Y^{hK}$.
\end{remark}

Given the difficulties described above, our next step in studying the problem 
of iteration is to consider a special case. To help with this, we observe that because 
$X$ is a fibrant profinite $H$-spectrum for each closed subgroup $H$ of $G$, 
there is a canonical map
\[\colim_j X^{hK_j} \overset{\cong}{\longrightarrow} 
\colim_j \mathrm{Tot}(\mathrm{Map}(K_j^{\bullet}, X)) 
 \to \mathrm{Tot}(\mathrm{Map}(K^{\bullet}, X)) \overset{\cong}{\longrightarrow} 
 X^{hK},\] where here $K$ is any 
 closed subgroup of $G$ (for the isomorphisms, see \cite[Proposition 3.23]{hfplt}).
 A version of the following result 
in the setting of discrete $G$-spectra was obtained in \cite[end of \S 3]{davis2}.

\begin{lemma}\label{hyperfib}
Let $G$ be a profinite group and $K$ a closed 
subgroup. Let $X$ be an $f$-$G$-spectrum and $q$ an integer 
such that $\pi_tX=0$ for all $t\geq q$. Then the canonical map 
\[
\colim_j X^{hK_j} \overset{\simeq}{\longrightarrow} 
X^{hK}
\]
is an equivalence of spectra.
\end{lemma}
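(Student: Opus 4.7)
The plan is to compare the profinite descent spectral sequence \eqref{hfpss} for $X^{hK}$ with the filtered system of descent spectral sequences for the $X^{hK_j}$, exploiting the boundedness hypothesis on $\pi_\ast X$ to ensure uniform strong convergence.

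First, I would record the algebraic input. Since $K$ is closed and each $K_j = KU_j$ is open in $G$ (with $U_j$ open normal), the family $\{K_j\}_j$ is cofinal among the open subgroups of $G$ containing $K$, and $K = \bigcap_j K_j$. Since $X$ is an $f$-$G$-spectrum, each $\pi_tX$ is a finite discrete $G$-module, so the standard fact that continuous cohomology of a profinite group with discrete coefficients is continuous in the subgroup variable yields, for all $s,t$,
\[
\colim_j H^s_c(K_j; \pi_tX) \xrightarrow{\ \cong\ } H^s_c(K; \pi_tX).
\]

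Next, the spectral sequence \eqref{hfpss} applied to $X$ as a profinite $H$-spectrum, for any closed subgroup $H \leq G$, takes the form
\[
E_2^{s,t}(H) = H^s_c(H; \pi_tX) \Rightarrow \pi_{t-s}(X^{hH}).
\]
Because $\pi_tX = 0$ for $t \geq q$, the $E_2$-page is concentrated in the strip $0 \leq s$, $t \leq q-1$. For each stem $n = t-s$ this leaves only the finitely many bidegrees with $0 \leq s \leq q-1-n$. The filtration on $\pi_n(X^{hH})$ is therefore finite, so the spectral sequence converges strongly, uniformly in $H$, with no $\lim^1$ subtleties.

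The inclusions $K \hookrightarrow K_j$ induce a compatible map of spectral sequences $E_r^{s,t}(K_j) \to E_r^{s,t}(K)$ together with the map of abutments $X^{hK_j} \to X^{hK}$, whose colimit over $j$ is the canonical map in the statement. Passing to the filtered colimit in $j$, the $E_2$ identification above and exactness of filtered colimits give an isomorphism on every $E_r$-page, hence on $E_\infty$. Combined with the fact that homotopy groups of spectra commute with filtered colimits and with the strong convergence just established, this yields
\[
\pi_n\bigl(\colim_j X^{hK_j}\bigr) = \colim_j \pi_n(X^{hK_j}) \xrightarrow{\ \cong\ } \pi_n(X^{hK})
\]
for every $n$, so the canonical map is a weak equivalence. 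The main technical obstacle is ensuring the colimit interacts cleanly with the spectral sequence filtrations, which is precisely what the coconnectivity hypothesis $\pi_tX = 0$ for $t \geq q$ is designed to deliver by converting conditional convergence into a finite-filtration argument.
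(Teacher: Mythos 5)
Your proposal is correct and follows essentially the same route as the paper: compare the descent spectral sequences for $K$ and the $K_j$, use continuity of continuous cohomology in the subgroup variable to identify the colimit of the $E_2$-terms, and use the coconnectivity of $X$ to get a uniformly finite filtration so that the colimit spectral sequence converges to the colimit of the abutments. The only cosmetic difference is that where you argue the finite-filtration/strong-convergence step directly, the paper cites \cite[Proposition 3.3]{mitchell} for precisely this point.
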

\begin{proof}
For each closed subgroup $H$ of $G$, by \cite[Proposition 3.23]{hfplt} 
the homotopy spectral sequence 
for $\mathrm{Tot}(\mathrm{Map}(H^\bullet, X))$ is a descent spectral sequence $E_r^{*,*}(H)$ that has the form
\[
E_2^{s,t}(H)=H^s_c(H; \pi_tX) \Rightarrow \pi_{t-s}(X^{hH}),
\] 
where the continuous cohomology group has coefficients in the finite discrete 
$H$-module $\pi_tX$.
For the subgroups $K$ and all $K_j$, the associated spectral sequences assemble to yield a map of conditionally convergent spectral sequences
\[
\colim_j E_r^{*,*}(K_j) \to E_r^{*,*}(K).
\]
Since $K=\bigcap_j K_j$, the $E_2$-terms satisfy
\[
\colim_j E_2^{s,t}(K_j) = \colim_j H^s_c(K_j; \pi_tX) \cong H^s_c(K; \pi_tX) = E_2^{s,t}(K).
\]
By \cite[Proposition 3.3]{mitchell}, the spectral sequence $\colim_j E_r^{*,*}(K_j)$ 
has abutment equal to the colimit of the abutments $\pi_\ast(X^{hK_j})$ if there exists a fixed $m$ such that $H_c^s(K_j; \pi_tX)=0$ for all $t\geq m$, all $s\geq 0$ and all $j$, and by hypothesis, 
this condition is satisfied. 
Hence, since the map of spectral sequences is an isomorphism 
from the $E_2$-terms onward, the map of 
abutments $\colim_j \pi_*(X^{hK_j}) \to \pi_*(X^{hK})$ is an isomorphism.
\end{proof}

\subsection{The notion of a $K$-Postnikov $G$-spectrum and its use with iterated 
continuous homotopy fixed points}\label{usedinintro} 

Let $X$ be an $f$-$G$-spectrum and let $K$ be a closed normal 
subgroup of $G$. 
As in \cite[Section 3.2]{hfplt}, for each integer $q$, let 
\[P^qX := \mathrm{cosk}_qX\] be the $q$th Postnikov section of $X$ in 
$\hSpg\mspace{1mu}\mathrm{:}$ since 
$X$ is an $f$-$G$-spectrum, each of $X$ 
and the Postnikov sections $P^qX$, for every $q$, are $f$-$H$-spectra, where 
$H$ is any closed subgroup of $G$. As explained in \cite[page~2888, end of \S 3]{davis2}, by 
applying 
the fact that each discrete $G$-spectrum $P^q X$ satisfies a coconnectivity condition, we see 
that 
\begin{itemize}
\item
for any closed subgroup $H$ in $G$, by \cite[Theorem 7.2]{delta}, there is an equivalence
\[(P^q X)^{h_dH} \simeq \holim_\Delta \mathrm{Map}_c(H^{\bullet+1}, P^qX)^H,\] 
and hence, Remark \ref{arbitrary} shows that
\[(P^q X)^{h_dH} \simeq (P^q X)^{hH},\] for every integer $q\mspace{1mu}\mathrm{;}$
\item
each $P^q X$ is a hyperfibrant discrete $G$-spectrum$\mspace{1mu}\mathrm{;}$ and thus,
\item 
there is the natural identification \[(P^q X)^{h_dK} = \colim_j (P^q X)^{h_dK_j},\] showing 
that each $(P^q X)^{h_dK}$ is a discrete $G/K$-spectrum.
\end{itemize}
The above discussion shows that  we can regard each $G/K$-spectrum $(P^qX)^{hK}$ as 
a discrete $G/K$-spectrum by the identification \[(P^q X)^{hK} = (P^q X)^{h_dK}\] 
(this 
identification can also be obtained by using Lemma \ref{hyperfib}). 

Before giving our next result, we introduce some helpful 
terminology. We note that though the following definition is not short, 
it is also not complicated: the crux of the notion it defines 
(``$K$-Postnikov $G$-spectrum") is almost completely captured by its property (a), and each of properties (b), (c) and (d) is just a basic compatibility condition 
that one would expect to be satisfied when the zigzags in the defining data satisfy a 
minimal level of naturality.

\begin{defn}\label{postnikovK}
Let $X$ be an $f$-$G$-spectrum and let $K$ be a closed normal subgroup of $G$. 
Then we say that $X$ is a $K$-Postnikov $G$-spectrum if there is 
an inverse system $\{X^q(K)\}_{q \in \mathbb{Z}}$ of $G/K$-spectra that has 
the following properties$\mspace{1mu}\mathrm{:}$
\begin{itemize}
\item[(a)]
for each $q$, the spectrum $X^q(K)$ is an $f$-$G/K$-spectrum and a 
model for the $G/K$-spectrum 
$(P^q X)^{hK}$ in the category of profinite $G/K$-spectra$\mspace{1mu}\mathrm{;}$ 
\item[(b)]
for each $q$, there is an equivalence $\bigl((P^qX)^{h_dK}\bigr)^{h_dG/K} 
\simeq (X^q(K))^{h_dG/K}\mathrm{;}$ 
\item[(c)]
there is an equivalence
\[\holim_q \bigl((P^qX)^{h_dK}\bigr)^{h_dG/K} \simeq \holim_q (X^q(K))^{h_dG/K};\] 
and
\item[(d)] 
the fibrant profinite $G/K$-spectrum
$\holim_q X^q(K)$ is a model for the 
$G/K$-spectrum $\holim_q (P^qX)^{hK}$ 
in the category of profinite $G/K$-spectra.
\end{itemize}  
\end{defn}

\begin{remark} 
If $X$ is a $K$-Postnikov $G$-spectrum, then because 
each $X^q(K)$ is a 
discrete $G/K$-spectrum, it follows automatically that each map
$X^q(K) \to X^{q-1}(K)$ has the requisite continuity
properties for being a morphism in the category of profinite $G/K$-spectra, and hence, 
the inverse system 
$\{X^q(K)\}_{q \in \mathbb{Z}}$ 
is a diagram in that category.
\end{remark}

Our next result illustrates the utility of Definition \ref{postnikovK}. After proving 
this result, we give a discussion of when an $f$-$G$-spectrum possesses the 
properties required by this definition.

Several key ingredients in the next result and its proof are from an 
unpublished manuscript by the first author (however, the proof below does not 
depend in any way on this manuscript)
that uses Postnikov towers to study 
the problem of iterated homotopy fixed points in the setting of discrete $G$-spectra. 
The aforementioned ingredients and manuscript build on the idea that Postnikov 
towers are a helpful tool for building homotopy fixed point spectra for profinite 
group actions, and as 
far as we know, this idea 
is primarily due to \cite{fausk} (and \cite{goerss, jardine}). 

\begin{theorem}\label{fiterated}
Let $G$ be a profinite group  
and let $K$ be a closed normal subgroup of $G$ such that $G/K$ has finite virtual cohomological dimension. If $X$ is a $K$-Postnikov $G$-spectrum, 
then there is an equivalence 
\[
X^{hG} \simeq (X^{hK})^{hG/K}.
\] 
\end{theorem}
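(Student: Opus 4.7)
The plan is to reduce the profinite iteration statement to the known iterated homotopy fixed point equivalence for the coconnective Postnikov sections $P^q X$, which behave simultaneously as profinite and (hyperfibrant) discrete $G$-spectra, and then assemble the resulting equivalences across the tower. Axioms (a)--(d) of Definition \ref{postnikovK} are precisely the coherence data needed to make this assembly work.

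First I would establish, for each fixed $q \in \mathbb{Z}$, the chain
\[ (P^q X)^{hG} \simeq (P^q X)^{h_dG} \simeq \bigl((P^q X)^{h_dK}\bigr)^{h_dG/K} \simeq (X^q(K))^{h_dG/K} \simeq (X^q(K))^{hG/K}. \]
The first equivalence follows from Remark \ref{arbitrary} applied to the coconnective $f$-$H$-spectra $P^q X$; the second from the hyperfibrancy of each $P^q X$ (the third bullet preceding Definition \ref{postnikovK}) combined with Remark \ref{hyperfibrant}; the third is property (b) of Definition \ref{postnikovK}; and the fourth is Theorem \ref{profdisc}, applicable because $X^q(K)$ is an $f$-$G/K$-spectrum and $G/K$ has finite virtual cohomological dimension. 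By property (a) and Definition \ref{itdef}, the final spectrum is also a representative of $((P^q X)^{hK})^{hG/K}$.

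Next I would pass to the homotopy limit over $q$. Since $X$ is an $f$-$G$-spectrum, each $\pi_t X$ is finite, so the Postnikov tower $\{P^q X\}_q$ satisfies Mittag-Leffler and $X \simeq \holim_q P^q X$ in $\hSpg$; the same argument gives $X^{hK} \simeq \holim_q (P^q X)^{hK}$. As $(-)^{hG}$ and $(-)^{hG/K}$ are right-derived functors, they commute with $\holim_q$, so taking $\holim_q$ of the levelwise chain and applying property (c) to promote the term $\holim_q \bigl((P^q X)^{h_dK}\bigr)^{h_dG/K}$ to $\holim_q (X^q(K))^{h_dG/K}$ (then invoking the naturality of Theorem \ref{profdisc} to reach $\holim_q (X^q(K))^{hG/K}$) yields
\[ X^{hG} \simeq \holim_q (P^q X)^{hG} \simeq \holim_q (X^q(K))^{hG/K} \simeq \bigl( \holim_q X^q(K) \bigr)^{hG/K}. \]
Property (d) then ensures that $\holim_q X^q(K)$ is a profinite $G/K$-model of $\holim_q (P^q X)^{hK} \simeq X^{hK}$, so by Definition \ref{itdef} the right-hand side is precisely $(X^{hK})^{hG/K}$.

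The main obstacle, and the reason the somewhat elaborate axioms (b)--(d) are built into the definition of a $K$-Postnikov $G$-spectrum, is controlling the interchange of $\holim_q$ with the various iterated fixed-point constructions: a levelwise equivalence of $G/K$-spectra need not pass to an equivalence of homotopy limits without the compatibility encoded in (c), and the very existence of a profinite $G/K$-spectrum structure on $X^{hK}$ (needed to make sense of $(X^{hK})^{hG/K}$ through Definition \ref{itdef}) is exactly the content of (d). Once these coherence axioms are invoked, the argument reduces to a diagram chase across Postnikov levels.
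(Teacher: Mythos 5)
Your proposal is correct and follows essentially the same route as the paper's proof: decompose along the Postnikov tower, use the coconnectivity/hyperfibrancy of each $P^qX$ to get the levelwise iteration in the discrete world, pass to profinite $G/K$-homotopy fixed points via Theorem \ref{profdisc} and property (b), and assemble over $q$ using properties (c) and (d) together with the commutation of $(-)^{hG/K}$ with homotopy limits of fibrant profinite $G/K$-spectra. The only cosmetic differences are that the paper justifies $X^{hG}\simeq\holim_q(P^qX)^{hG}$ via the tower of fibrations $\{\mathrm{Map}_G(EG,P^qX)\}_q$ rather than a Mittag--Leffler argument, and cites the proof of Lemma 4.9 of the iterated-fixed-points paper where you invoke hyperfibrancy directly.
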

\begin{proof}
There is the following chain of equivalences:
\begin{align*}
X^{hG} & = \mathrm{Map}_G(EG, \lim_q P^q X) 
\simeq \holim_{q} (P^q X)^{hG} \simeq \holim_q (P^q X)^{h_dG} 
\\ & \simeq \holim_q \bigl(\colim_j (P^q X)^{h_d K_j}\bigr)^{h_dG/K}
\simeq \holim_q (X^q(K))^{h_dG/K} \\ & \simeq \holim_q (X^q(K))^{hG/K} 
\cong \mathrm{Map}_{G/K}(E(G/K), \holim_q X^q(K)) \\
& =  \bigl(\holim_q X^q(K)\bigr)^{hG/K}
= (X^{hK})^{hG/K}.
\end{align*} 
Above, the second equivalence is from an application of the fact that each $P^q X$ is a fibrant profinite 
$G$-spectrum and, as in \cite[page~205, (9)]{hfplt}, $\{\mathrm{Map}_G(EG, 
{P^qX})\}_{q \in \mathbb{Z}}$ 
is a tower of fibrations of fibrant objects in spectra; the fourth equivalence 
follows from \cite[proof of Lemma 4.9]{davis2};  
the sixth equivalence is due to Theorem \ref{profdisc}; 
the fibrancy 
of $\holim_q X^q(K)$ as a profinite $G/K$-spectrum implies the next-to-last 
equivalence (an equality); 
and the last equivalence, an identity that is an example of 
Definition \ref{itdef}, is because the equivalences 
\[\holim_q X^q(K) \simeq \holim_q (P^qX)^{hK} 
\simeq \lim_q (P^q X)^{hK} \cong \mathrm{Map}_K(EG, \lim_q P^q X) 
= X^{hK}\] show that $\holim_q X^q(K)$ is a model for $X^{hK}$ in the category 
of profinite $G/K$-spectra.
\end{proof}

\begin{remark}\label{zyzzyva}
In the proof of Theorem \ref{fiterated}, we showed that 
$\holim_q X^q(K)$ is a profinite $G/K$-model for $X^{hK}$. 
Now suppose that $Y$ is another profinite $G/K$-model for $X^{hK}$. 
For this abstract situation, as explained in Remark \ref{zigzags}, we do not know in general that there must be an equivalence $(\holim_q X^q(K))^{hG/K} \simeq 
Y^{hG/K}$. However, we do not regard this lack of an equivalence as problematic: we believe that in most specific situations, $\holim_q X^q(K)$ and $Y$ will be related in a concrete way that allows one to obtain an equivalence between their $G/K$-homotopy fixed point spectra. 
For example, in Remark \ref{compatible}, two models ($F^s_G X$ and $F^s_HX$) are related in a specific way that gives equivalent 
homotopy fixed point spectra, 
by an argument that is, in an appreciable way, simpler than the abstract 
argument of Remark \ref{zigzags} (which also gives the desired equivalence).
\end{remark}

\begin{remark} 
The first displayed line of the proof of Theorem \ref{fiterated} shows that if $G$ is an arbitrary profinite 
group and $X$ is an $f$-$G$-spectrum, then there are equivalences 
\begin{equation}\label{threeterms}
X^{hG} \simeq \holim_q (P^qX)^{hG} \simeq \holim_q (P^q X)^{h_dG}.
\end{equation} 
The equivalence between the first and third expressions in (\ref{threeterms}) 
gives 
\[X^{hG} \simeq \holim_{q \geq 0} (P^q X)^{h_dG} 
= \Bigl(\mspace{-1.2mu}\holim_{q \geq 0} P^q X\mspace{-1.2mu}\Bigr)^{
\negthinspace \mspace{-.5mu}hG},\] where the last expression, 
$\bigl(\holim_{q \geq 0} P^q X\bigr)^{\negthinspace \mspace{.5mu} hG}$, 
is the $G$-homotopy fixed point spectrum of the continuous $G$-spectrum 
$\holim_{q \geq 0} P^q X$ (in the sense of \cite{davis}; $\{P^q X\}_{q \geq 0}$ 
is a tower of discrete $G$-spectra that are fibrant as spectra). This observation
gives a version of Theorem \ref{profdisc} with the hypothesis on 
cohomological dimension removed.
\end{remark}

\begin{remark}\label{given_later}
Let $G$ be a strongly complete profinite group. Just after Definition \ref{twodefs} (see (\ref{itturnsoutyes!})), we stated that if 
$h \colon X \to Y$ is a $G$-equivariant map between 
$\pi$-finite $G$-spectra such that $h$ is a stable equivalence 
of spectra, then the map
\[(F^s_G(h))^{hG} \colon X^{hG} = (F^s_GX)^{hG} \xrightarrow{\,\simeq\,} 
(F^s_GY)^{hG} = Y^{hG}\] is an equivalence. Now we are able to 
give a proof of this assertion. First, as noted just after Theorem 
\ref{Gstablefinitecompletion}, 
the map $F^s_G(h)$ is a stable 
equivalence of spectra. Also, 
if $Z$ is a fibrant profinite $G$-spectrum, recall 
that its profinite homotopy groups $\pi_\ast(Z)$ are isomorphic to the 
stable homotopy groups $\pi_\ast(UZ)$ of the fibrant spectrum $UZ$ that underlies $Z$ (see \cite[Proposition 4.8, (b)]{gspectra}). Then for all integers $q$ and $t$, since $P^q(F^s_GX)$ and $P^q(F^s_GY)$ are fibrant profinite $G$-spectra,   
\[\pi_t(UP^q(F^s_GX)) \cong \pi_t(P^q(F^s_GX)) \cong \begin{cases}
\pi_t(F^s_GX) \cong \pi_t(UF^s_GX) \cong \pi_t(UF^s_GY), & t \leq q; \\
0, & t > q,\end{cases}\] so that each induced map $P^q(F^s_GX) 
\to P^q(F^s_GY)$ is a weak equivalence of discrete $G$-spectra 
(because each such map 
induces an isomorphism on stable homotopy groups, and 
hence, is an equivalence of spectra). 
It follows 
that for every $q$, the map \[(P^q(F^s_G(h)))^{h_dG} \colon (P^q(F^s_GX))^{h_dG} \xrightarrow{\,\simeq\,} 
(P^q(F^s_GY))^{h_dG}\] 
is an equivalence between 
spectra. 
Also, there is the commutative diagram 
\[\xymatrix{
(F^s_GX)^{hG} \ar[r]^-{(F^s_G(h))^{hG}} & (F^s_GY)^{hG}\\
\mathrm{Map}_G(EG, \lim_q P^q(F^s_GX)) \ar[u]^-\simeq \ar[r] 
\ar[d]_-{\simeq} 
& \mathrm{Map}_G(EG, \lim_q P^q(F^s_GY)) \ar[u]_-\simeq 
\ar[d]^-{\simeq}\\ 
\holim_q \mathrm{Map}_G(EG, P^q(F^s_GX)) 
\ar[r] & \holim_q \mathrm{Map}_G(EG, P^q(F^s_GY))
}\] (as noted in (\ref{identification}), 
the two upward-pointing vertical maps are equivalences and, 
as in the proof of Theorem \ref{fiterated}, the two downward-pointing 
vertical maps are equivalences), 
which reduces showing that $(F^s_G(h))^{hG}$ is 
an equivalence to showing that the 
bottommost horizontal map is an equivalence, and 
therefore, we are done if we can show that for every $q$, the map 
\[\mu^q := \mathrm{Map}_G(EG, P^q(F^s_G(h))) \colon 
\mathrm{Map}_G(EG, P^q(F^s_GX)) 
\to \mathrm{Map}_G(EG, P^q(F^s_GY))\] is 
an equivalence (the source and target of this map 
are fibrant spectra, by \cite[Lemma 3.10]{hfplt}). Each map $\mu^q$ 
can be identified with the map 
\[(P^q(F^s_G(h)))^{hG} = \mathrm{Map}_G(EG, R_G(P^q(F^s_G(h)))) \colon 
(P^q(F^s_GX))^{hG} \to (P^q(F^s_GY))^{hG},\] and this last map is 
an equivalence because 
\begin{equation}\label{tofinishthisremark}
(P^q(F^s_GX))^{hG} \simeq (P^q(F^s_GX))^{h_dG} 
\xrightarrow{\,\simeq\,} (P^q(F^s_GY))^{h_dG} 
\simeq (P^q(F^s_GY))^{hG},
\end{equation} 
where the first and last equivalences in (\ref{tofinishthisremark}) follow 
from the first bullet point in the discussion at the beginning of Section \ref{usedinintro}. This 
completes the proof of the desired result.
\end{remark} 

As in Theorem \ref{fiterated}, we continue to let 
$K$ be a closed normal subgroup of $G$ and we let $X$ be 
an $f$-$G$-spectrum. Theorem \ref{fiterated} shows that 
it is quite useful when $X$ is a $K$-Postnikov $G$-spectrum and so 
we now give a discussion about when this occurs.

Suppose that $(P^qX)^{hK}$ is a $\pi$-finite $G/K$-spectrum, for each 
integer $q$, with 
$G/K$ strongly complete. (It is worth noting that if 
$G$ is strongly complete, then so is $G/K$.)
By Lemma \ref{hyperfib}, the $G/K$-equivariant map
\[(P^q X)^{hK} = \mathrm{Map}(EG, P^qX)^K \overset{\simeq}\longleftarrow \colim_j \mathrm{Map}(EG, P^qX)^{K_j} 
= \colim_j (P^q X)^{hK_j}\] is a weak equivalence of spectra. The continuous surjection 
$G/K \to G/K_j$ makes the discrete $G/K_j$-spectrum $\mathrm{Map}(EG, P^qX)^{K_j}$ 
a discrete $G/K$-spectrum, so that the $\pi$-finite $G/K$-spectrum 
$\colim_j \mathrm{Map}(EG, P^qX)^{K_j}$ is a 
discrete $G/K$-spectrum. Notice that the inverse system
\begin{equation}\label{system}
\bigl\{F^s_{G/K}\bigl(\colim_j \mathrm{Map}(EG, P^qX)^{K_j}\bigr)\bigr\}_{q \in \mathbb{Z}}\end{equation} is 
a diagram of profinite $G/K$-spectra, each of which is an $f$-$G/K$-spectrum and a 
profinite $G/K$-model for $(P^qX)^{hK}$. Since $(P^qX)^{hH} \simeq (P^qX)^{h_dH}$ 
for all closed subgroups $H$, the discrete $G/K$-spectrum 
$\colim_j \mathrm{Map}(EG, P^qX)^{K_j}$ can be identified with the 
discrete $G/K$-spectrum $(P^qX)^{h_dK} = \colim_j (P^qX)^{h_dK_j}$. Because 
\[\colim_j \mathrm{Map}(EG, P^qX)^{K_j} \overset{\simeq}{\longrightarrow} 
F^s_{G/K}\bigl(\colim_j \mathrm{Map}(EG, P^qX)^{K_j}\bigr)\] is a weak equivalence 
of discrete $G/K$-spectra, there is a weak equivalence
\[\bigl(\colim_j \mathrm{Map}(EG, P^qX)^{K_j}\bigr)^{h_dG/K} 
\overset{\simeq}{\longrightarrow} 
\bigl(F^s_{G/K}\bigl(\colim_j \mathrm{Map}(EG, P^qX)^{K_j}\bigr)\bigr)^{h_dG/K}.\] The 
preceding two sentences show that the system in (\ref{system}) satisfies 
property (b) of Definition \ref{postnikovK}, and additionally, it is easy to 
see that properties (c) and (d) are satisfied, completing a proof of the following 
result.

\begin{theorem}\label{postnikovresult}
Let $K$ be a closed normal subgroup of the profinite group $G$ and 
let $X$ be an $f$-$G$-spectrum. If $G/K$ is strongly complete 
and, for every $q \in \mathbb{Z}$, 
$(P^q X)^{hK}$ has finite homotopy groups,
then $X$ is a $K$-Postnikov $G$-spectrum.
\end{theorem}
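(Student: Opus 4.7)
The plan is to verify the four conditions (a)--(d) of Definition \ref{postnikovK} directly, using as the candidate inverse system
\[X^q(K) := F^s_{G/K}\bigl(\textstyle{\colim_j}\,\mathrm{Map}(EG, P^qX)^{K_j}\bigr),\]
with transition maps induced from the Postnikov tower $\{P^qX\}_q$ in $\hSpg$. The strong completeness of $G/K$ is what allows us to apply the functor $F^s_{G/K}$ of Theorem \ref{Gstablefinitecompletion}, once we have argued that its input is a $\pi$-finite $G/K$-spectrum.

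\textbf{Step 1: $\pi$-finiteness and identification with $(P^qX)^{hK}$.} For each $q$, Lemma \ref{hyperfib} applies to the $f$-$G$-spectrum $P^qX$ (whose homotopy groups vanish above level $q$) and gives a $G/K$-equivariant weak equivalence of spectra
\[\textstyle{\colim_j}\,\mathrm{Map}(EG, P^qX)^{K_j} \xrightarrow{\,\simeq\,} \mathrm{Map}(EG, P^qX)^K = (P^qX)^{hK}.\]
By hypothesis, $(P^qX)^{hK}$ has finite homotopy groups, so the left-hand side is a $\pi$-finite $G/K$-spectrum (and visibly a discrete $G/K$-spectrum since each $G/K_j$ is finite). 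Then Theorem \ref{Gstablefinitecompletion} provides an $f$-$G/K$-spectrum $X^q(K)$ together with a $G/K$-equivariant stable equivalence
\[\textstyle{\colim_j}\,\mathrm{Map}(EG, P^qX)^{K_j} \xrightarrow{\,\simeq\,} X^q(K),\]
and composing with the previous equivalence exhibits the desired profinite $G/K$-model, proving (a).

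\textbf{Step 2: Iterated discrete homotopy fixed points, condition (b).} As recalled at the start of Section \ref{usedinintro}, each $P^qX$ is a hyperfibrant discrete $G$-spectrum and there is an identification $(P^qX)^{h_dK} = \colim_j(P^qX)^{h_dK_j}$. Combined with the equivalence $(P^qX)^{h_dH} \simeq (P^qX)^{hH}$ for every closed $H \leq G$, Step 1 shows that the canonical zigzag
\[\bigl(P^qX\bigr)^{h_dK} \xleftarrow{\,\simeq\,} \textstyle{\colim_j}\,\mathrm{Map}(EG,P^qX)^{K_j} \xrightarrow{\,\simeq\,} X^q(K)\]
consists of weak equivalences of discrete $G/K$-spectra. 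Applying the right Quillen functor $(-)^{h_dG/K}$ (i.e.\ fibrant replacement in $\Sp(\Sh_{*G/K})$ followed by $(G/K)$-fixed points) gives the equivalence of (b).

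\textbf{Step 3: Passage to the homotopy limit, conditions (c) and (d).} The constructions in Steps 1--2 are functorial in $q$: the Postnikov projections $P^qX \to P^{q-1}X$ in $\hSpg$ induce maps between all the intermediate objects, making everything into inverse systems. Applying $\holim_q$ to the zigzag in Step 2 and using that $\holim_q$ preserves levelwise weak equivalences in $\Sp(\Sh_{*G/K})$ between diagrams of fibrant objects gives (c). For (d), the profinite $G/K$-spectrum $\holim_q X^q(K)$ (fibrant, since it is a homotopy limit of fibrant profinite $G/K$-spectra) receives the levelwise zigzag from $\{(P^qX)^{hK}\}_q$ assembled above, and taking $\holim_q$ produces a zigzag of $G/K$-equivariant weak equivalences of spectra between $\holim_q (P^qX)^{hK}$ and $\holim_q X^q(K)$, which is exactly the data required by Definition \ref{promodel}.

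The main technical point, and the step I would spend the most care on, is Step 3: one needs to confirm that the zigzag of weak equivalences established in Step 2 is natural in $q$ (with respect to the Postnikov projections) and that every object in the diagram is fibrant as a spectrum, so that $\holim_q$ of the zigzag remains a zigzag of weak equivalences. Naturality of $F^s_{G/K}$ is supplied by the functoriality clause of Theorem \ref{Gstablefinitecompletion}, and fibrancy of the discrete $G/K$-spectra $(P^qX)^{h_dK}$ follows from the hyperfibrancy and coconnectivity discussion at the start of Section \ref{usedinintro}; together these suffice.
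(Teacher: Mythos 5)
Your proposal is correct and follows essentially the same route as the paper's own argument (given in the discussion immediately preceding the theorem): the same candidate system $X^q(K)=F^s_{G/K}\bigl(\colim_j \mathrm{Map}(EG,P^qX)^{K_j}\bigr)$, the same use of Lemma \ref{hyperfib} to identify this colimit with $(P^qX)^{hK}$ and of Theorem \ref{Gstablefinitecompletion} to produce the $f$-$G/K$-model, and the same verification of properties (a) and (b), with (c) and (d) following from naturality in $q$. Your Step 3 simply spells out what the paper dismisses as ``easy to see.''
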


Notice that in Theorem \ref{postnikovresult}, the hypotheses of the first 
sentence imply that 
for each $q \in \mathbb{Z}$, since $P^qX$ is an $f$-$K$-spectrum, $\pi_t(P^qX)$ is a finite discrete $K$-module 
for every integer $t$. 
\begin{cor}\label{goodtool}
Let $G$, $K$, and $X$ be as in the first sentence of Theorem $\mathrm{\ref{postnikovresult}}$ and 
suppose that $G/K$ is strongly complete. If $H^s_c(K; \pi_t(X))$ is a finite group 
for all $s \geq 0$ and every integer $t$, then $X$ is a $K$-Postnikov $G$-spectrum.
\end{cor}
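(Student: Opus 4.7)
The plan is to reduce the corollary to Theorem \ref{postnikovresult} by verifying that, under the stated hypotheses, $(P^qX)^{hK}$ has finite homotopy groups for every $q \in \mathbb{Z}$. Since $X$ is an $f$-$G$-spectrum, the first paragraph of Section \ref{usedinintro} records that each Postnikov section $P^qX$ is also an $f$-$H$-spectrum for every closed subgroup $H$ of $G$; in particular $P^qX$ is an $f$-$K$-spectrum, so $(P^qX)^{hK}$ is defined and carries the descent spectral sequence
\[
E_2^{s,t}(q) = H^s_c(K;\pi_t(P^qX)) \Rightarrow \pi_{t-s}\bigl((P^qX)^{hK}\bigr)
\]
of \cite[Proposition 3.23]{hfplt}.

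I would now observe that the $E_2$-page of this spectral sequence has both a horizontal vanishing line and finite entries. Indeed, since $P^qX = \mathrm{cosk}_qX$, we have $\pi_t(P^qX) \cong \pi_t(X)$ when $t \leq q$ and $\pi_t(P^qX) = 0$ when $t > q$, so $E_2^{s,t}(q)$ vanishes for $t > q$. Moreover, the hypothesis that $H^s_c(K;\pi_t(X))$ is finite for all $s \geq 0$ and every $t$ shows that every remaining entry $E_2^{s,t}(q) = H^s_c(K;\pi_t(X))$, $t \leq q$, is finite. Combined with the fact that $H^s_c(K;-) = 0$ for $s < 0$, this means that for each fixed $n$ only finitely many terms $E_2^{s,n+s}(q)$ (namely those with $0 \leq s \leq q-n$) can contribute to $\pi_n((P^qX)^{hK})$.

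From the horizontal vanishing line one also gets strong convergence of the spectral sequence, not merely conditional convergence: the tower $\{\mathrm{Tot}_k(\mathrm{Map}_K(K^{\bullet+1},P^qX))\}_k$ stabilizes in each fixed total degree once $k$ exceeds $q-n$, so the $\lim^1$-obstruction vanishes. Hence each $\pi_n((P^qX)^{hK})$ admits a finite filtration whose associated graded consists of finitely many finite groups, and is therefore finite itself. This gives exactly the hypothesis of Theorem \ref{postnikovresult}, from which the conclusion that $X$ is a $K$-Postnikov $G$-spectrum is immediate.

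The argument is essentially mechanical once the descent spectral sequence is in place; the only point requiring a modicum of care is the passage from ``finite entries on a bounded $E_2$-page'' to ``finite abutment,'' i.e.\ verifying that strong convergence is available in the profinite $K$-spectrum descent spectral sequence for coconnective $f$-$K$-spectra. I expect this to be the main (mild) obstacle, but it follows from the bounded Postnikov shape of $P^qX$ together with \cite[Proposition 3.23]{hfplt}, since the horizontal vanishing line makes the Tot-tower pro-constant in each degree.
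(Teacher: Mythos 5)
Your proposal is correct and follows essentially the same route as the paper: reduce to Theorem \ref{postnikovresult}, run the descent spectral sequence $H^s_c(K;\pi_t(P^qX)) \Rightarrow \pi_{t-s}((P^qX)^{hK})$, and use the vanishing line $t>q$ together with the finiteness hypothesis to conclude that the abutment is finite. The paper handles the convergence/finite-filtration point by citing \cite[Lemma 5.48]{thomason}, which is the same observation you make about the Tot-tower stabilizing in each total degree.
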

\begin{proof}
By Theorem \ref{postnikovresult}, it suffices to show that for each integer $q$, 
the spectrum $(P^q X)^{hK}$ has finite homotopy groups. We consider the 
conditionally convergent descent spectral sequence
\[E_2^{s,t} = H^s_c(K; \pi_t(P^qX)) \Rightarrow \pi_{t-s}((P^qX)^{hK}).\] 
For all $s \geq 0$ and any $t > q$, the vanishing of $\pi_t(P^qX)$ implies 
that $E_2^{s,t} = 0$. It follows that the filtration on the abutment is finite 
(for example, see \cite[Lemma 5.48]{thomason}). Since any nonzero 
terms on the $E_2$-page of the above spectral sequence 
are finite, we obtain the desired conclusion.
\end{proof}

\begin{remark}\label{symondsremark}
In Corollary \ref{goodtool}, the hypotheses imply that 
$\pi_t(X)$ is a finite discrete $K$-module for 
every integer $t$. Then it is worth noting that in Corollary \ref{goodtool}, the 
finiteness condition on the continuous cohomology groups is plausible: 
for example, by \cite[Proposition 4.2.2]{symondsweigel}, if $K$ is of 
type $p$-$\mathbf{FP}_\infty$ and $M$ is a finite discrete ($p$-torsion) $\mathbb{Z}_p[[K]]$-module, then $H^s_c(K;M)$ is finite, for all $s \geq 0$ (we refer the reader to 
\cite{symondsweigel} for more details about this result). 
\end{remark}

Let $q$ be any fixed integer. We consider in more detail the condition that 
$(P^qX)^{hK}$ has finite homotopy groups. As above, we assume 
that $K$ is closed and normal in $G$ and $X$ is an $f$-$G$-spectrum.
  
Suppose that $K$ contains an open normal subgroup 
$U_{\negthinspace \scriptscriptstyle{K}}$ such that 
the abelian groups $H^s_c(U_{\negthinspace \scriptscriptstyle{K}}; \pi_t(P^qX))$ are 
finite for all $s \geq 0$ and 
all $t \leq q$. As 
recalled earlier (from \cite{hfplt}), 
since $P^q X$ is a fibrant profinite $K$-spectrum, there is a 
weak equivalence
\[
(P^q X)^{hK} \overset{\simeq}{\longrightarrow} 
\bigl((P^q X)^{hU_{\negthinspace \scriptscriptstyle{K}}}\bigr)^{\negthinspace \mspace{1mu} h_dK/{U_{\negthinspace \scriptscriptstyle{K}}}}.
\]
By arguing as in the proof of Corollary \ref{goodtool}, the 
descent 
spectral sequence
\[H^s_c(U_{\negthinspace \scriptscriptstyle{K}}; \pi_t(P^q X)) \Rightarrow 
\pi_{t-s}((P^q X)^{hU_{\negthinspace \scriptscriptstyle{K}}})\] 
yields that $(P^q X)^{hU_{\negthinspace \scriptscriptstyle{K}}}$
is a $\pi$-finite $K/{U_{\negthinspace \scriptscriptstyle{K}}}$-spectrum. 

\begin{remark}\label{wepausehere}
Without making any additional assumptions, we pause 
to consider the $\pi$-finite $K/{U_{\negthinspace \scriptscriptstyle{K}}}$-spectrum $(P^q X)^{hU_{\negthinspace \scriptscriptstyle{K}}}$. 
Since the finite group 
$K/{U_{\negthinspace \scriptscriptstyle{K}}}$ is 
naturally discrete, it is strongly complete, and hence, by 
Theorem \ref{Gstablefinitecompletion}, there is a $K/{U_{\negthinspace \scriptscriptstyle{K}}}$-equivariant map
\[
(P^q X)^{hU_{\negthinspace \scriptscriptstyle{K}}} \overset{\simeq}{\longrightarrow} 
F^s_{K/{U_{\negthinspace \scriptscriptstyle{K}}}}((P^q X)^{hU_{\negthinspace \scriptscriptstyle{K}}})
\]
that 
is a weak equivalence, with target equal to an $f$-$K/{U_{\negthinspace \scriptscriptstyle{K}}}$-spectrum, yielding a particularly nice model for $(P^q X)^{hU_{\negthinspace \scriptscriptstyle{K}}}$. 
It follows that there are equivalences of spectra
\[(P^q X)^{hK} \simeq \bigl((P^q X)^{hU_{\negthinspace \scriptscriptstyle{K}}}\bigr)^{\negthinspace \mspace{1mu} h_dK/{U_{\negthinspace \scriptscriptstyle{K}}}} 
\simeq \holim_{K/{U_{\negthinspace \scriptscriptstyle{K}}}} F^s_{K/{U_{\negthinspace \scriptscriptstyle{K}}}}((P^q X)^{hU_{\negthinspace \scriptscriptstyle{K}}}).\] The last spectrum 
above can be regarded as a homotopy limit of a fibrant 
profinite spectrum in the category of profinite spectra, showing 
that $(P^q X)^{hK}$ can be regarded as a profinite spectrum. 
\end{remark}

Now we go a little further with the above conclusion that 
$(P^q X)^{hU_{\negthinspace \scriptscriptstyle{K}}}$ is a 
$\pi$-finite $K/{U_{\negthinspace \scriptscriptstyle{K}}}$-spectrum. 
Since $K/U_{\negthinspace \scriptscriptstyle{K}}$ is a finite group and 
$\pi_t((P^q X)^{hU_{\negthinspace \scriptscriptstyle{K}}})$ is finite for every integer $t$, the groups $H^s(K/{U_{\negthinspace \scriptscriptstyle{K}}}; 
\pi_t((P^q X)^{hU_{\negthinspace \scriptscriptstyle{K}}}))$ are finite for all $s$ and $t$. Therefore, if for all integers $t$, $H^s(K/{U_{\negthinspace \scriptscriptstyle{K}}}; \pi_t((P^qX)^{hU_{\negthinspace \scriptscriptstyle{K}}}))$ is zero for all $s \geq r$, for some positive integer $r$, 
then once again, 
the descent spectral sequence
\[H^s(K/{U_{\negthinspace \scriptscriptstyle{K}}}; \pi_t((P^qX)^{hU_{\negthinspace \scriptscriptstyle{K}}})) \Rightarrow \pi_{t-s}((P^qX)^{hK})\] yields 
that $(P^qX)^{hK}$ has finite homotopy groups, as desired.

\subsection{More cases of 
well-behaved iteration via applications of Theorem \ref{fiterated}} 
To obtain these additional cases, we begin with the following definition, a natural extension of 
Definition \ref{twodefs}. 

\begin{defn}\label{extend} 
If $G$ is a strongly complete profinite group and $X$ is a $\pi$-finite $G$-spectrum, then 
$F^s_GX$ is both 
a fibrant profinite $H$-spectrum and a discrete $H$-spectrum for any closed subgroup $H$ in 
$G$, and thus, for such $H$, we define
\[X^{hH} := (F^s_GX)^{hH}, \ \ \ X^{h_dH} := (F^s_GX)^{h_dH}.\] 
\end{defn}

In the above definition, when the closed subgroup 
$H$ is a proper subgroup of $G$ that is 
strongly complete, then 
since $X$ is a $\pi$-finite $H$-spectrum, the definition also yields that 
$X^{hH} := (F^s_HX)^{hH}$ and $X^{h_dH} := (F^s_H X)^{h_dH}$. Thus, in 
the remark 
below, we show that our two definitions of $X^{hH}$ agree with each other and 
that our two definitions of $X^{h_dH}$ are equivalent.

\begin{remark}\label{compatible}
Let $G$ and $X$ be as in Definition \ref{extend} and let $H$ be a strongly 
complete closed subgroup of $G$. Then there 
is a commutative diagram 
\[\xymatrix{
X \ar[r]^-\simeq  \ar[d]_-\simeq & F^s_HX \ar[d]^-\simeq  
\\ 
F^s_GX \ar[r]^-\simeq & F^s_H(F^s_GX) &
}\] of weak equivalences in spectra that 
are $H$-equivariant. It is easy to see that 
\[F^s_GX \overset{\simeq}{\longrightarrow} F^s_H(F^s_GX) \overset{\simeq}{\longleftarrow} 
F^s_HX\] is a zigzag of weak equivalences in the category of discrete $H$-spectra, 
and hence, \[(F^s_GX)^{h_dH} \simeq (F^s_HX)^{h_dH},\] as desired. Now, as in 
\cite[Section 3.2]{hfplt}, for each integer $q$, 
we let $P^q$ denote the functor \[\mathrm{cosk}_q \colon \mathrm{Sp}(\hat{\mathcal{S}}_{\ast H}) 
\to \mathrm{Sp}(\hat{\mathcal{S}}_{\ast H}), \ \ \ Z \mapsto P^qZ := \mathrm{cosk}_qZ.\] 
Notice that 
\[(F^s_GX)^{hH} \simeq \holim_q (P^q(F^s_GX))^{hH} \simeq \holim_q (P^q (F^s_GX))^{h_dH},\] 
as in the 
proof of Theorem \ref{fiterated}. Similarly, we have
\[(F^s_HX)^{hH} \simeq \holim_q (P^q(F^s_HX))^{h_dH}.\] Also, by considering 
homotopy groups, 
it is easy to see that for each integer $q$,
\[P^q(F^s_GX) \overset{\simeq}{\longrightarrow} P^q(F^s_H(F^s_GX)) \overset{\simeq}{\longleftarrow} 
P^q(F^s_HX)\] is a zigzag of weak equivalences in the category of discrete $H$-spectra. 
It follows that 
\[(P^q(F^s_GX))^{h_dH} \overset{\simeq}{\longrightarrow} (P^q(F^s_H(F^s_GX)))^{h_dH} \overset{\simeq}{\longleftarrow} 
(P^q(F^s_HX))^{h_dH}\] is a zigzag of weak equivalences between fibrant spectra 
that is natural in $q$, and hence, 
\begin{equation}\label{forbelowonly} 
(F^s_GX)^{hH} \simeq (F^s_HX)^{hH}.
\end{equation}
Since $F^s_GX$ is an $f$-$H$-spectrum, the equivalence in (\ref{forbelowonly}) can 
also be obtained by applying the argument in Remark \ref{zigzags} to 
the zigzag $F^s_GX \xleftarrow{\,\simeq\,} X \xrightarrow{\,\simeq\,} F^s_H X$. 
However, the argument given in the present remark is preferable to that of 
Remark \ref{zigzags} in the sense that the crux of the former argument is a zigzag of length two, whereas the latter argument requires a zigzag of length four.
\end{remark}

\begin{remark}\label{moregeneral}
In Definition \ref{extend}, if $H = G$, we already know that the 
two notions of homotopy fixed points are homotopy invariant. In fact, 
as we now explain, these two notions are homotopy invariant for 
all $H$ (even if $H$ is not strongly complete). Let $G$ be strongly 
complete, let $H$ be any closed subgroup of $G$, and let $h \colon 
X \to Y$ be a stable equivalence of spectra that is $G$-equivariant, 
with $X$ and $Y$ $\pi$-finite $G$-spectra. Since $F^s_G(h)$ is a 
weak equivalence of discrete $G$-spectra, it is a weak equivalence of 
discrete $H$-spectra, and hence, the map
\[(F^s_G(h))^{h_dH} \colon X^{h_dH} = (F^s_GX)^{h_dH} \xrightarrow{\,\simeq\,}
(F^s_GY)^{h_dH} = Y^{h_dH}\] is an equivalence. Now suppose that 
$W$ and $Z$ 
are fibrant profinite $G$-spectra that are $\pi$-finite as 
$G$-spectra (these hypotheses 
are satisfied for example, if $W$ and $Z$ are $f$-$G$-spectra), 
with $\ell \colon W \to Z$ 
a map of profinite $G$-spectra 
that is a stable equivalence of spectra. 
Then $W$ and $Z$ are fibrant profinite $H$-spectra, $\ell$ is a map of profinite $H$-spectra, and (a reference for the content in the remainder of this sentence is 
the material in \cite[\S 3.4 (after the proof of Lemma 3.21)]{hfplt}) there is the induced morphism 
\[\mathrm{Map}(H^\bullet, \ell) \colon 
\mathrm{Map}(H^\bullet, W) \to \mathrm{Map}(H^\bullet, Z)\] of cosimplicial 
spectra, which induces a morphism from descent spectral sequence 
\[E_2^{s,t} = H^s_c(H; \pi_tW) \Rightarrow \pi_{t-s}(W^{hH})\] to 
descent spectral sequence
\[E_2^{s,t} = H^s_c(H; \pi_tZ) \Rightarrow \pi_{t-s}(Z^{hH}).\] 
For any $t \in \mathbb{Z}$, the induced map $\pi_t(\ell) \colon 
\pi_tW \to \pi_tZ$ between 
the stable profinite homotopy groups is an 
isomorphism between finite discrete $H$-modules (for example, the 
stable profinite homotopy group $\mathcal{P}_t := \pi_t(R_GW)$ is Hausdorff and isomorphic to the 
finite stable homotopy group $\pi_t(W)$, so that the topology of the profinite 
space $\mathcal{P}_t$ is the discrete topology), and hence, the 
$E_2$-terms of the two conditionally convergent spectral sequences above 
are isomorphic, yielding successively that the map $\pi_\ast(\ell^{hH})$ is an 
isomorphism and, finally, $\ell^{hH} \colon W^{hH} \to 
Z^{hH}$ is a stable equivalence. This conclusion implies that 
\[(F^s_G(h))^{hH} \colon X^{hH} = (F^s_GX)^{hH} 
\xrightarrow{\,\simeq\,}
(F^s_GY)^{hH} = Y^{hH}\] is an equivalence, as desired. With this 
argument and Remark \ref{given_later}, we now have two proofs that 
given $h$ as above, $(F^s_G(h))^{hG}$ is an equivalence: though the 
argument in this remark is shorter, we do not regard this situation as 
one of excess, since the observations and main techniques (utilizing Postnikov 
towers in the world of profinite $G$-spectra and homotopy fixed points for discrete 
$G$-spectra) in Remark \ref{given_later}'s 
proof are interesting and useful. 
\end{remark}

Now that we have shown that Definition \ref{extend} is robust, we can 
put it to work with the following result.

\begin{theorem}\label{pifiniteiterated} 
Let $G$ be a strongly complete profinite group with 
$K$ a closed normal subgroup of $G$ such that $G/K$ has finite virtual cohomological dimension. 
If $X$ is a $\pi$-finite $G$-spectrum such that $F^s_GX$ is a $K$-Postnikov $G$-spectrum, 
then there is an equivalence of spectra
\[ 
X^{hG} \simeq (X^{hK})^{hG/K}.
\]
\end{theorem}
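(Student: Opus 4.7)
The plan is to reduce the statement to Theorem \ref{fiterated} by replacing $X$ with its $f$-$G$-spectrum model $F^s_GX$ supplied by Theorem \ref{Gstablefinitecompletion}. Essentially no new ideas are required beyond carefully translating both sides of the desired equivalence through Definition \ref{extend}.

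First, I would unpack the definitions. Since $G$ is strongly complete and $X$ is a $\pi$-finite $G$-spectrum, Definition \ref{extend} gives
\[
X^{hG} := (F^s_GX)^{hG}, \qquad X^{hK} := (F^s_GX)^{hK}.
\]
The assumption that $F^s_GX$ is a $K$-Postnikov $G$-spectrum supplies, by property (d) of Definition \ref{postnikovK}, a fibrant profinite $G/K$-spectrum $\holim_q X^q(K)$ that is a profinite $G/K$-model for the $G/K$-spectrum $(F^s_GX)^{hK}$. This is exactly the data required by Definition \ref{itdef}, so the iterated continuous homotopy fixed points
\[
\bigl((F^s_GX)^{hK}\bigr)^{hG/K} = \bigl(\holim_q X^q(K)\bigr)^{hG/K}
\]
are well-defined, and by construction coincide with $(X^{hK})^{hG/K}$.

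Next, I would invoke Theorem \ref{fiterated} directly applied to the $f$-$G$-spectrum $F^s_GX$: the hypotheses are satisfied because $G/K$ has finite virtual cohomological dimension and $F^s_GX$ is a $K$-Postnikov $G$-spectrum. The conclusion is the equivalence
\[
(F^s_GX)^{hG} \simeq \bigl((F^s_GX)^{hK}\bigr)^{hG/K}.
\]
Translating the left-hand side via Definition \ref{extend} and the right-hand side via the identification in the previous paragraph yields $X^{hG} \simeq (X^{hK})^{hG/K}$.

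There is no substantive obstacle here: the theorem is essentially a corollary of Theorem \ref{fiterated}, with the role of $F^s_G(-)$ being simply to pass from the $\pi$-finite setting to the $f$-$G$-spectrum setting where Theorem \ref{fiterated} applies. The one point that requires a little care is the well-definedness of $(X^{hK})^{hG/K}$ in the sense of Definition \ref{itdef}; this is not an obstacle because it is literally built into the hypothesis that $F^s_GX$ is a $K$-Postnikov $G$-spectrum, as the proof of Theorem \ref{fiterated} already exhibits $\holim_q X^q(K)$ as the required profinite $G/K$-model.
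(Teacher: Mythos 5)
Your proposal is correct and is essentially identical to the paper's proof, which is the one-line chain $X^{hG} = (F^s_G X)^{hG} \simeq \bigl((F^s_GX)^{hK}\bigr)^{hG/K} = (X^{hK})^{hG/K}$ obtained by applying Theorem \ref{fiterated} to $F^s_GX$ and translating via Definition \ref{extend}. Your additional remarks on the well-definedness of the iterated fixed points via Definition \ref{itdef} and the model $\holim_q X^q(K)$ are accurate elaborations of what the paper leaves implicit.
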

\begin{proof}
By Theorem \ref{fiterated}, we have
\[X^{hG} = (F^s_G X)^{hG} \simeq \bigl((F^s_GX)^{hK}\bigr)^{\negthinspace \mspace{1mu} hG/K} 
= (X^{hK})^{hG/K}. 
\qedhere 
\]
\end{proof}

The following result (which is actually just an assemblage of well-known facts) gives an example of a single hypothesis that 
is sufficient to yield the group-theoretic conditions that are required 
by Theorem \ref{pifiniteiterated} (in its first sentence).

\begin{lemma}\label{groupresult}
Suppose that 
$\mathbb{G}$ is a profinite group that is $p$-adic analytic. 
If $G$ is a closed subgroup of $\mathbb{G}$ and $K$ is a 
closed normal subgroup of $G$, then $G$ and $G/K$ 
are strongly complete and $G/K$ has finite virtual cohomological 
dimension. 
\end{lemma}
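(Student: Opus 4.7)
The plan is to assemble three standard structural facts about $p$-adic analytic profinite groups from \cite{dsms} and the related literature, and then verify each of the three claims — $G$ strongly complete, $G/K$ strongly complete, $G/K$ of finite virtual cohomological dimension — by reducing to these facts applied to $G/K$ (for the first and third claims) and to $G$ (for the second).

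First I would observe that the class of $p$-adic analytic profinite groups is closed under passage to closed subgroups and to Hausdorff quotients by closed normal subgroups. The statement about closed subgroups is \cite[Theorem 9.6]{dsms}, which is already used in the paper right after Theorem \ref{notinlatersection}; applying it twice shows that $G$ and then $K$ are $p$-adic analytic. The statement about quotients is another standard consequence of the Lazard--Lubotzky--Mann characterization of $p$-adic analytic groups in \cite{dsms} (the class is precisely the class of profinite groups containing an open uniform pro-$p$ subgroup, which is manifestly closed under quotients by closed normal subgroups). Thus both $G$ and $G/K$ are $p$-adic analytic, and it suffices to prove that an arbitrary $p$-adic analytic profinite group $H$ is strongly complete and of finite virtual cohomological dimension.

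For strong completeness I would argue as follows. Every $p$-adic analytic profinite group is topologically finitely generated — this is immediate once one knows $H$ contains an open uniform pro-$p$ subgroup of finite rank, together with the fact that the finite quotient by such a subgroup is finitely generated. Then I would invoke the theorem of Serre for the $p$-adic analytic case (or, more generally, the Nikolov--Segal resolution of Serre's conjecture) to conclude that every finite-index subgroup of $H$ is open, i.e.\ that $H$ is strongly complete. Applied to $H=G$ and $H=G/K$ separately, this gives the two strong-completeness assertions.

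For the finite virtual cohomological dimension of $G/K$, the plan is to pass to an open uniform pro-$p$ subgroup $U$ of $G/K$ (which exists by the Lazard--Lubotzky--Mann characterization). Such a $U$ is a torsion-free pro-$p$ group of finite dimension $d$, and by Lazard's theorem (see e.g.\ the discussion in \cite{symondsweigel}) one has $\mathrm{cd}(U)=d<\infty$, meaning $H^s_c(U;M)=0$ for all $s>d$ and every discrete $U$-module $M$. Thus $U$ has finite cohomological dimension in the strict sense of Definition \ref{vcd}, so $G/K$ has finite virtual cohomological dimension. The only real bookkeeping point — and the mildest ``obstacle'' in the argument — is to check that Lazard's computation of $\mathrm{cd}(U)$ gives the strict version of cohomological dimension used in Definition \ref{vcd}; but this is automatic for torsion-free uniform pro-$p$ groups, since Lazard's vanishing applies to all discrete $p$-torsion coefficients and the $p$-primary part is the only relevant one for a pro-$p$ group.
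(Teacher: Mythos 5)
Your proposal follows essentially the same route as the paper: closure of the class of $p$-adic analytic groups under closed subgroups and quotients via \cite[Theorem 9.6]{dsms}, strong completeness via topological finite generation and Serre's theorem (the paper phrases this as: $G$ contains a finitely generated open pro-$p$ subgroup, which is strongly complete, and strong completeness then propagates to $G$ and to $G/K$), and finite virtual cohomological dimension via an open uniform pro-$p$ subgroup (the paper simply cites \cite[page 330]{davis} for this last point). The one place where your write-up is not literally correct is the gloss on Lazard's theorem: the assertion that $\mathrm{cd}(U)=d$ ``means'' $H^s_c(U;M)=0$ for all $s>d$ and \emph{every} discrete $U$-module $M$ is false, and the claim that ``the $p$-primary part is the only relevant one'' does not dispose of non-torsion coefficients. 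For $U=\mathbb{Z}_p$ one has $d=1$ but $H^2_c(\mathbb{Z}_p;\mathbb{Z})\cong H^1_c(\mathbb{Z}_p;\mathbb{Q}/\mathbb{Z})\cong\mathbb{Q}_p/\mathbb{Z}_p\neq 0$, so the strict cohomological dimension of Definition \ref{vcd} genuinely exceeds $\mathrm{cd}_p$ here. This does not damage the lemma, which only asks for \emph{some} finite bound: the standard inequality $\mathrm{scd}_p(U)\leq\mathrm{cd}_p(U)+1$ (see, e.g., \cite{ribes} or Serre's \emph{Galois Cohomology}), together with $\mathrm{cd}_\ell(U)=0$ for $\ell\neq p$ when $U$ is pro-$p$, gives $H^s_c(U;M)=0$ for all $s\geq d+2$ and every discrete $U$-module $M$, which is exactly what Definition \ref{vcd} requires. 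So your argument is repaired by replacing the bound $s>d$ with $s\geq d+2$ and citing the $\mathrm{scd}_p\leq\mathrm{cd}_p+1$ inequality rather than waving at $p$-torsion coefficients.
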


\begin{proof}
Since $\mathbb{G}$ is 
$p$-adic analytic, \cite[Theorem 9.6]{dsms} implies that $G$ and 
the quotient group $G/K$ are $p$-adic analytic groups. 
Since $G$ is $p$-adic 
analytic, it contains an open pro-$p$ subgroup $H$ that is finitely generated. It follows 
that $H$ is strongly complete, 
and hence, $G$ and $G/K$ are strongly complete. Also, since $G/K$ is $p$-adic analytic, it 
has finite virtual cohomological 
dimension (for example, see \cite[page 330]{davis}). 
\end{proof}

We point out that besides implying the group-theoretic conditions of Theorem \ref{pifiniteiterated}, the single hypothesis of Lemma 
\ref{groupresult} also implies (as stated explicitly in the lemma) that 
$G/K$ is strongly complete, and this property is required in 
Theorem \ref{postnikovresult} and Corollary \ref{goodtool}.

Now we want to apply Theorem \ref{fiterated} to suitable homotopy 
limits of profinite $G$-spectra: to do this, we need the following definition, whose 
key components are properties (a) and (b) below.
\begin{defn}\label{KGbeta}
Let $K$ be a closed normal subgroup of $G$ and let $J$ be a small category. 
We say that $\{X_\beta\}_{\beta \in J}$  
is a diagram of $K$-Postnikov $G$-spectra that is natural in $\beta$ whenever 
\begin{itemize}
\item[(a)]
$\{X_\beta\}_{\beta \in J}$ is a $J$-shaped diagram of 
profinite $G$-spectra, such that 
\item[(b)]
for each $\beta$, $X_\beta$ is a $K$-Postnikov 
$G$-spectrum, with $\{X^q_\beta(K)\}_{q \in \mathbb{Z}}$ denoting
the inverse system of profinite $G/K$-models associated to 
$X_\beta$, 
\item[(c)]
there is a $(J \times \{q\})$-shaped 
diagram $\{X^q_\beta(K)\}_{\beta, q}$ of profinite $G/K$-spectra, 
\item[(d)]
there is an equivalence
\[\holim_\beta \holim_q\bigl((P^qX_\beta)^{h_dK}\bigr)^{\negthinspace \mspace{1mu} h_dG/K}
\simeq \holim_\beta \holim_q (X^q_\beta(K))^{h_dG/K},\] and 
\item[(e)]
the fibrant profinite $G/K$-spectrum $\holim_\beta \holim_q X^q_\beta(K)$ is a model for the 
$G/K$-spectrum $\holim_\beta \holim_q (P^q X_\beta)^{hK}$ in the category of 
profinite $G/K$-spectra.
\end{itemize} 
\end{defn}
\begin{remark}\label{aboutprevious}
In the above definition, we point out that 
properties (c), (d) and (e) are 
criteria that one would expect to be satisfied on the basis of just (a) and 
(b) alone: they are just expressing the requirement that the input data have 
enough naturality to be practically useful (e.g., properties (d) and (e) 
are saying that properties (c) and (d), respectively, in Definition 
\ref{postnikovK} are natural in $\beta$).  
\end{remark}

Notice that if $\{X_\beta\}_{\beta \in J}$ 
is a diagram of $K$-Postnikov $G$-spectra that is 
natural in $\beta$, then $\holim_\beta X_\beta$ is a fibrant profinite $G$-spectrum. 

\begin{theorem}\label{iteratedthm}
Let $G$ be a profinite group and K a closed normal subgroup of $G$ such that $G/K$ has finite virtual cohomological dimension. 
If $\{X_\beta\}_{\beta \in J}$ is a 
diagram of $K$-Postnikov $G$-spectra that is natural in $\beta$, then there is an 
equivalence of spectra
\[\bigl(\holim_\beta X_\beta\bigr)^{\negthinspace \mspace{1mu} hG} \simeq 
\Bigl(\mspace{-2mu}\bigl(\holim_\beta X_\beta\bigr)^{\negthinspace 
\mspace{1mu} hK}\Bigr)^{\negthinspace \mspace{-.5mu} hG/K}\]
and there is a conditionally convergent 
spectral sequence  
$$E_2^{s,t}=H_c^s\bigl(G/K;\pi_t\bigl((\holim_\beta X_\beta)^{hK}\bigr)\bigr) \Rightarrow \pi_{t-s}\bigl((\holim_\beta X_\beta)^{hG}\bigr),$$ 
where the $E_2$-term is the continuous cohomology of $G/K$ with coefficients the profinite $G/K$-module $\pi_t\bigl((\holim_\beta X_\beta)^{hK}\bigr)$. 
\end{theorem}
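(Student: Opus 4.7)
The plan is to thread the homotopy limit $\holim_\beta$ through each step of the proof of Theorem \ref{fiterated}. Set $Y := \holim_\beta X_\beta$, a fibrant profinite $G$-spectrum. Since each $X_\beta$ is an $f$-$G$-spectrum, every Postnikov section $P^q X_\beta$ is a fibrant profinite $H$-spectrum for any closed subgroup $H \le G$, and the towers $\{(P^q X_\beta)^{hG}\}_q = \{\mathrm{Map}_G(EG, P^q X_\beta)\}_q$ are towers of fibrations of fibrant spectra by \cite[page 205, (9)]{hfplt}. Using $X_\beta \simeq \lim_q P^q X_\beta$ and the fact that $\mathrm{Map}_G(EG, -)$ commutes with (homotopy) limits of fibrant objects, I first obtain
\[
Y^{hG} \simeq \holim_\beta \holim_q (P^q X_\beta)^{hG}.
\]

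Next, the first bullet point of Section \ref{usedinintro} supplies $(P^q X_\beta)^{hG} \simeq (P^q X_\beta)^{h_dG}$, natural in $\beta$ and $q$, with no hypothesis on $G$. Then \cite[proof of Lemma 4.9]{davis2}, applied to the hyperfibrant discrete $G$-spectrum $P^q X_\beta$ and combined with Lemma \ref{hyperfib}, gives a natural iteration equivalence $(P^q X_\beta)^{h_dG} \simeq \bigl((P^q X_\beta)^{h_dK}\bigr)^{h_dG/K}$. Invoking property (d) of Definition \ref{KGbeta} to replace $(P^q X_\beta)^{h_dK}$ by its profinite $G/K$-model, we arrive at
\[
Y^{hG} \simeq \holim_\beta \holim_q \bigl(X^q_\beta(K)\bigr)^{h_dG/K}.
\]
Since each $X^q_\beta(K)$ is an $f$-$G/K$-spectrum and $G/K$ has finite virtual cohomological dimension, Theorem \ref{profdisc} converts $h_dG/K$ into $hG/K$; commuting $\mathrm{Map}_{G/K}(E(G/K), -)$ with the double homotopy limit of the fibrant profinite $G/K$-spectra $X^q_\beta(K)$ then telescopes the right-hand side into $\bigl(\holim_\beta \holim_q X^q_\beta(K)\bigr)^{hG/K}$. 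Property (e) of Definition \ref{KGbeta} asserts that $\holim_\beta \holim_q X^q_\beta(K)$ is a profinite $G/K$-model for $Y^{hK} \simeq \holim_\beta \holim_q (P^q X_\beta)^{hK}$, so Definition \ref{itdef} identifies this last spectrum with $(Y^{hK})^{hG/K}$, completing the equivalence. The conditionally convergent spectral sequence is then the descent spectral sequence \eqref{hfpss} for the fibrant profinite $G/K$-spectrum $\holim_\beta \holim_q X^q_\beta(K)$, whose profinite homotopy groups recover $\pi_\ast(Y^{hK})$ as profinite $G/K$-modules via the model property.

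The hard part will be the naturality bookkeeping: each link in the chain above must be natural in both $\beta$ and $q$, and every functor pulled through the double homotopy limit must genuinely commute with it. Properties (c), (d) and (e) of Definition \ref{KGbeta} are designed precisely to package this naturality at the two critical transit points --- the passage between ``discrete $h_dG/K$'' and ``profinite $hG/K$'' on the $X^q_\beta(K)$, and the final identification of $Y^{hK}$ with its profinite model --- while the fibrancy of every intermediate object ensures $\lim = \holim$ and that the relevant mapping spectra commute with limits without further hypotheses.
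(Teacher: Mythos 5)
Your proposal is correct and follows essentially the same route as the paper: both arguments reduce to the profinite $G/K$-model $\holim_\beta \holim_q X^q_\beta(K)$ via properties (c)--(e) of Definition \ref{KGbeta} and obtain the spectral sequence as the descent spectral sequence \eqref{hfpss} for that model. The only cosmetic difference is that the paper applies Theorem \ref{fiterated} objectwise in $\beta$ and then commutes $\holim_\beta$ past $(-)^{hG/K}$ using \cite[Proposition 3.12]{hfplt}, whereas you thread $\holim_\beta$ through the proof of Theorem \ref{fiterated} from the start; the ingredients and the naturality bookkeeping are identical.
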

\begin{proof}
The $f$-$G$-spectra $X_{\beta}$ satisfy the hypotheses of Theorem \ref{fiterated}. Hence for each $\beta$, we have an equivalence  
\[
(X_{\beta})^{hG} \simeq ((X_{\beta})^{hK})^{hG/K} = \bigl(\holim_q X^q_\beta (K)\bigr)^{\negthinspace \mspace{1mu} hG/K}.
\] 
Since taking $H$-homotopy fixed points commutes with homotopy limits 
of fibrant profinite $H$-spectra, for any profinite group $H$, by \cite[Proposition 3.12]{hfplt}, and the $G/K$-spectrum $(\holim_\beta X_\beta)^{hK}$ is 
easily seen to have $\holim_\beta \holim_q X^q_\beta(K)$ as a profinite $G/K$-model, 
it follows that there is an equivalence
\[
(\holim_\beta X_\beta)^{hG} \simeq \bigl(\holim_\beta \holim_q X^q_\beta (K)\bigr)^{\negthinspace \mspace{1mu} hG/K} = 
\Bigl(\mspace{-2mu}(\holim_\beta X_\beta)^{hK}\Bigr)^{\negthinspace \mspace{-.5mu} hG/K}.\]

The second assertion now follows from the above equivalence: by 
applying (\ref{hfpss}), there is a homotopy fixed point spectral sequence 
$$E_2^{s,t}=H_c^s\bigl(G/K; \pi_t\bigl(\holim_\beta \holim_q X^q_\beta (K)\bigr)\bigr)\Rightarrow \pi_{t-s}\bigl((\holim_\beta X_\beta)^{hG}\bigr).$$ There is a $G/K$-equivariant isomorphism 
\[\pi_t\bigl((\holim_\beta X_\beta)^{hK}\bigr) \cong \pi_t\bigl(\holim_\beta \holim_q X^q_\beta (K)\bigr)\] 
of abelian groups, and thus, 
$\pi_t\bigl((\holim_\beta X_\beta)^{hK}\bigr)$ can be identified with the profinite $G/K$-module 
$\pi_t\bigl(\holim_\beta \holim_q X^q_\beta (K)\bigr)$.
\end{proof}

\begin{remark}
The discussion in Remark \ref{zyzzyva} applies to the fact that 
in Theorem \ref{iteratedthm}, we showed that $\holim_\beta \holim_q 
X^q_\beta(K)$ is a model for the $G/K$-spectrum 
$(\holim_\beta X_\beta)^{hK}$ in the 
category of profinite $G/K$-spectra. Thus, we just briefly note that 
if $Y$ is another profinite $G/K$-model for $(\holim_\beta X_\beta)^{hK}$, 
it is not known in general that there is an equivalence 
$(\holim_\beta \holim_q X^q_\beta(K))^{hG/K} \simeq Y^{hG/K}$. 
\end{remark}

The following definition extends Definition \ref{extend} 
to homotopy limits of diagrams.

\begin{defn}\label{beta}
Let $G$ be a strongly complete profinite group. Also, 
let $\{X_\beta\}_{\beta \in J}$, with $J$ a small 
category, be a diagram of $G$-spectra, such that for each $\beta$, $X_\beta$ is 
a $\pi$-finite $G$-spectrum that is fibrant as 
a spectrum. Then we define
\[(\holim_\beta X_\beta)^{hH} := (\holim_\beta F^s_G(X_\beta))^{hH},\] where 
$H$ is any closed subgroup of $G$. 
\end{defn}
\begin{remark}
We point out that the construction in Definition \ref{beta} has the 
following desired properties: (a) the canonical 
map \[\holim_\beta X_\beta \overset{\simeq}{\longrightarrow} 
\holim_\beta F^s_G(X_\beta)\] is a weak equivalence of spectra that is $G$-equivariant, so 
that $\holim_\beta X_\beta$ has the fibrant profinite $G$-spectrum $\holim_\beta F^s_G(X_\beta)$ as a model in the category of profinite $G$-spectra; 
(b) if the closed subgroup $H$ is finite, there are equivalences
\[(\holim_\beta X_\beta)^{h_dH} \overset{\simeq}{\longrightarrow} 
(\holim_\beta F^s_G(X_\beta))^{h_dH} \simeq (\holim_\beta F^s_G(X_\beta))^{hH} 
= (\holim_\beta X_\beta)^{hH},\] yielding agreement 
between the ``classical" and ``profinite" homotopy fixed points (the first and last terms, 
respectively, above); 
and (c) if $H$ is any closed subgroup that is strongly complete, then
\begin{align*}
(\holim_\beta F^s_G(X_\beta))^{hH} & \simeq \holim_\beta (F^s_G(X_\beta))^{hH} 
\simeq \holim_\beta (F^s_H(X_\beta))^{hH} \\ & \simeq (\holim_\beta F^s_H(X_\beta))^{hH},
\end{align*}
so that the two possible definitions of $(\holim_\beta X_\beta)^{hH}$ (the first 
and last terms in the preceding chain of equivalences) are equivalent 
to each other. 
\end{remark}

Given Definition \ref{beta}, the following result is immediate from 
Theorem \ref{iteratedthm}. 

\begin{cor}\label{iteratedpifinitecor}
Let $G$ be a strongly complete profinite group and let K be a closed normal subgroup of $G$ 
with $G/K$ having finite virtual cohomological dimension. Also, let 
$\{X_\beta\}_{\beta \in J}$, where $J$ is a small category, be a diagram of $G$-spectra, with 
each $X_\beta$ both a fibrant spectrum and a $\pi$-finite $G$-spectrum. If the $J$-shaped 
diagram $\{F^s_G(X_\beta)\}_\beta$ of profinite $G$-spectra is a diagram of 
$K$-Postnikov $G$-spectra that is natural in $\beta$, then there is an equivalence of spectra 
\[
\bigl(\holim_\beta X_\beta\bigr)^{\negthinspace \mspace{1mu}hG} \simeq \Bigl(\mspace{-2mu}\bigl(\holim_\beta X_\beta\bigr)^{\negthinspace \mspace{1mu}hK}\Bigr)^{\scriptstyle{\negthinspace \mspace{-.7mu}hG/K}}
\]
and a conditionally convergent spectral sequence  
\[
E_2^{s,t}=H_c^s\bigl(G/K;\pi_t\bigl((\holim_\beta X_\beta)^{hK}\bigr)\bigr)\Rightarrow \pi_{t-s}\bigl((\holim_\beta X_\beta)^{hG}\bigr),
\] 
whose $E_2$-term is the continuous cohomology of $G/K$ with coefficients the profinite $G/K$-module 
$\pi_t\bigl((\holim_\beta X_\beta)^{hK}\bigr)$. 
\end{cor}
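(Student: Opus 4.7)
The plan is to observe that this corollary follows almost immediately by combining Definition \ref{beta} with Theorem \ref{iteratedthm}; the work is essentially bookkeeping to unwind the notation on both sides of the desired equivalence.

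First, I would invoke Definition \ref{beta} to translate the statement into the language of profinite $G$-spectra. Since each $X_\beta$ is a $\pi$-finite $G$-spectrum and $G$ is strongly complete, Theorem \ref{Gstablefinitecompletion} supplies the $G$-equivariant weak equivalence $X_\beta \to F^s_G(X_\beta)$, and then Definition \ref{beta} gives
\[
(\holim_\beta X_\beta)^{hH} \;:=\; \bigl(\holim_\beta F^s_G(X_\beta)\bigr)^{\negthinspace hH}
\]
for any closed subgroup $H$ of $G$; applied with $H = G$ and $H = K$, this lets me rewrite both sides of the target equivalence in terms of the diagram $\{F^s_G(X_\beta)\}_\beta$.

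Next, I would verify that Theorem \ref{iteratedthm} applies directly to the diagram $\{F^s_G(X_\beta)\}_\beta$. The group-theoretic hypotheses (a profinite $G$, a closed normal subgroup $K$, and $G/K$ of finite virtual cohomological dimension) are transferred verbatim from the hypotheses of the corollary, and the assumption that $\{F^s_G(X_\beta)\}_\beta$ is a diagram of $K$-Postnikov $G$-spectra natural in $\beta$ is exactly what Theorem \ref{iteratedthm} requires. Invoking that theorem produces the equivalence
\[
\bigl(\holim_\beta F^s_G(X_\beta)\bigr)^{\negthinspace hG} \;\simeq\; \Bigl(\bigl(\holim_\beta F^s_G(X_\beta)\bigr)^{\negthinspace hK}\Bigr)^{\negthinspace hG/K}
\]
together with the conditionally convergent spectral sequence whose $E_2$-term is the continuous cohomology of $G/K$ with coefficients in the profinite $G/K$-module $\pi_t\bigl((\holim_\beta F^s_G(X_\beta))^{hK}\bigr)$.

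Finally, I would combine the two steps: substituting Definition \ref{beta}'s identities into the output of Theorem \ref{iteratedthm} immediately yields the stated equivalence and spectral sequence for $\{X_\beta\}_\beta$. The only subtlety — and the one place where one should pause — is the identification of the coefficient module: one needs that $\pi_t\bigl((\holim_\beta X_\beta)^{hK}\bigr)$ is, as a $G/K$-module, the same as $\pi_t\bigl((\holim_\beta F^s_G(X_\beta))^{hK}\bigr)$, but this is built into Definition \ref{beta} itself (the defining equality is $G$-equivariant). There is no genuine obstacle here; the entire proof is a one-line appeal to Theorem \ref{iteratedthm} after unpacking Definition \ref{beta}, which is why the authors describe it as ``immediate."
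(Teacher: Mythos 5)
Your proposal is correct and matches the paper's argument exactly: the paper likewise treats the corollary as immediate from Theorem \ref{iteratedthm} once Definition \ref{beta} is used to identify $(\holim_\beta X_\beta)^{hH}$ with $(\holim_\beta F^s_G(X_\beta))^{hH}$ for $H = G$ and $H = K$. Your remark that the coefficient-module identification is built into Definition \ref{beta} (as a literal defining equality, not merely an isomorphism) is the right way to dispose of the only potential subtlety.
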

   
By Lemma \ref{groupresult}, the conditions in the first sentence of 
Corollary \ref{iteratedpifinitecor} are satisfied when the profinite group $G$ 
is $p$-adic analytic.   
   
\section{Iterated continuous homotopy fixed points for $E_n$}\label{LT}
We return to our main example of the extended Morava stabilizer group $G_n$ and 
its continuous action on the Lubin-Tate spectrum $E_n$. 
Let $BP$ be the Brown-Peterson spectrum for the fixed prime $p$. Its coefficient ring is $BP_*=\Z_{(p)}[v_1,v_2,\ldots]$, where $v_i$ has degree $2(p^i-1)$. There is a map
\[
r:BP_* \to E_{n\ast}=W(\F_{p^n})[[u_1,\ldots,u_{n-1}]][u,u^{-1}],
\] 
defined by $r(v_i)=u_iu^{1-p^i}$ for $i<n$, $r(v_n)=u^{1-p^n}$ and $r(v_i)=0$ for $i>n$, that makes $E_{n\ast}$ a $BP_*$-module (for more information on this and the remaining details in this paragraph, see, for example, \cite[Introduction]{DHonaction} and \cite[\S 7]{homasa}). Let $I$ be an ideal in $BP_*$ of the form $(p^{i_0},v_1^{i_1},\ldots, v_{n-1}^{i_{n-1}})$ that is $G_n$-invariant (that is, $G_n(IE_{n\ast}) = IE_{n\ast}$). Such ideals form a cofiltered system and 
their images in $E_{n\ast}$ under $r$ provide each $\pi_tE_n$ with the structure of a continuous profinite $G_n$-module: explicitly, we have
\[
\pi_tE_n \cong \lim_I \pi_tE_n/I\pi_tE_n.
\]
In fact, for $t$ odd these groups vanish, for $t$ even each quotient $\pi_tE_n/I\pi_tE_n$ is a finite discrete $G_n$-module (the $G_n$-action on each quotient is induced by the $G_n$-action on $\pi_tE_n$), and the above decomposition of $\pi_tE_n$ as a 
limit of these finite discrete $G_n$-modules is compatible with the $G_n$-action.

In the collection of ideals $I$, we can fix a descending chain of ideals \[I_0 \supset I_1 \supset \cdots 
\supset I_k \supset \cdots\] with an associated tower 
\[M_{I_0} \leftarrow M_{I_1} \leftarrow \cdots  \leftarrow M_{I_k} \leftarrow \cdots\] 
of generalized Moore spectra with trivial $G_n$-action, such that 
\[L_{K(n)}(\mathbb{S}^0) \simeq \holim_k \bigl(L_{E_n}(M_{I_k})\bigr)_{\negthinspace f}\,,\] where $L_{K(n)}$ denotes 
Bousfield localization with respect to $K(n)$ and $(-)_f$ is a fibrant replacement functor for Bousfield-Friedlander spectra, and for each $k \geq 0$, $BP_*(M_{I_k})=BP_*/I_k$ (see \cite{homasa} and \cite[\S 4]{HS}). 
It is useful to note 
that the last condition (about Brown-Peterson homology) implies that for each $k$, 
\[\pi_t(E_n \wedge M_{I_k})\cong \pi_tE_n/I_k\pi_tE_n\] for all $t$. Also, as in \cite{homasa}, there is an equivalence \[E_n \simeq \holim_k (E_n \wedge M_{I_k})_f\] which is induced by the isomorphism $E_n \cong E_n \wedge \Sph^0$ in the stable homotopy category and the $G_n$-equivariant map 
\[
E_n \wedge \Sph^0 \overset{\simeq}{\longrightarrow} \holim_k (E_n \wedge M_{I_k})_f
\] (in the source of this map, $G_n$ acts trivially on $\Sph^0$), 
whose underlying map of spectra is a stable equivalence. 

Now the profinite group $G_n$ is strongly complete (see e.g. \cite[page 330]{davis}) and each 
$E_n\wedge M_{I_k}$ is a $\pi$-finite $G_n$-spectrum. Thus, the functorial replacement of Theorem \ref{Gstablefinitecompletion} yields a fibrant profinite $G_n$-spectrum $F^s_{G_n}((E_n\wedge M_{I_k})_f)$ built out of pointed simplicial finite discrete $G_n$-sets and a $G_n$-equivariant map  
\[
(E_n\wedge M_{I_k})_f \overset{\simeq}{\longrightarrow} 
F^s_{G_n}((E_n\wedge M_{I_k})_f) =: \Enik
\]  
which is a stable equivalence of underlying spectra. It follows that there is an 
equivalence
\[\holim_k (E_n \wedge M_{I_k})_f \overset{\simeq}{\longrightarrow} 
\holim_k \Enik\] of spectra, and the target of this equivalence 
can be regarded as a homotopy limit in the category of profinite $G_n$-spectra. Thus, 
because of the equivalence \[E_n \simeq \holim_k \Enik,\] 
we denote by $E'_n$ the fibrant profinite $G_n$-spectrum
\[
E'_n := \holim_k \Enik.
\] 
In \cite{hfplt} the continuous homotopy fixed points $E_n^{hG}$ under a closed subgroup $G$ of $G_n$ are defined as \[E_n^{hG} := (E'_n)^{hG}.\] These homotopy fixed points satisfy 
\begin{equation}\label{usefullimit}
E_n^{hG} \simeq \holim_k (\Enik)^{hG},
\end{equation} 
since each $\Enik$ is a fibrant profinite $G$-spectrum.

Given the above setup, we are now ready to give a proof of 
Theorem \ref{iteratedthmEnintro}. This proof is an application 
of Theorem \ref{iteratedthm} and involves the use of Corollary 
\ref{goodtool} and Remark \ref{symondsremark} in a key way.

\begin{proof}[Proof of Theorem \ref{iteratedthmEnintro}] 
Recall that $G$ is an arbitrary closed subgroup of $G_n$ and 
$K$ is any closed normal subgroup of $G$. Since $G_n$ is a 
$p$-adic analytic group, Lemma \ref{groupresult} 
implies that $G$ and $G/K$ are strongly complete and 
$G/K$ has finite virtual cohomological 
dimension.

Since $\Enik$ is an $f$-$G_n$-spectrum, it is an $f$-$G$-spectrum, and it follows from 
\cite[Proposition 4.2.2]{symondsweigel} (as recalled in 
Remark \ref{symondsremark}) that $H^s_c(K; \pi_t(\Enik))$ is finite for all $s$ and $t$. 
Then, by applying Corollary \ref{goodtool}, we can conclude that $\Enik$ is a $K$-Postnikov 
$G$-spectrum for each $k$. For each $k$ and any integer $q$, let 
\[L^q_k = F^s_{G/K}\bigl(\colim_j \mathrm{Map}(EG, P^q 
\Enik)^{K_j}\bigr)\negthinspace \mspace{-2mu}:\] the 
diagram $\{L^q_k\}_{q \in \mathbb{Z}}$ is 
the inverse system of profinite $G/K$-models associated to the $K$-Postnikov $G$-spectrum 
$\Enik$. (To avoid any confusion, 
we remind the reader that $\{U_j\}_j$ is the collection of open normal 
subgroups of $G$ (not $G_n$) and $K_j = KU_j$.) It is easy to see 
that $\{\Enik\}_k$ is a diagram of $K$-Postnikov $G$-spectra that is natural in $k$, and hence, 
Theorem \ref{iteratedthm} yields the equivalence
\[E_n^{hG} = \bigl(\holim_k \Enik\bigr)^{hG} \simeq 
\Bigl(\mspace{-3mu}\bigl(\mspace{1mu}\holim_k \Enik\bigr)^{hK}\Bigr)^{\negthinspace hG/K} 
= \bigl(E_n^{hK}\bigr)^{\negthinspace hG/K}.
\qedhere
\] 
\end{proof} 

We now give the proof of Theorem \ref{thm:SS}. Our proof continues the 
setup and notation that was established above 
in the proof of Theorem \ref{iteratedthmEnintro}. 
    
\begin{proof}[Proof of Theorem \ref{thm:SS}.] 
Since $\{\Enik\}_k$ is a diagram of $K$-Postnikov $G$-spectra 
that is natural in $k$, Theorem 
\ref{iteratedthm} shows that there exists a conditionally convergent spectral 
sequence that has the form
\begin{equation}\label{firstdss}
H^s_c(G/K; \pi_t(E_n^{hK})) \Rightarrow \pi_{t-s}(E_n^{hG}).\end{equation}

Now let $H$ denote any closed subgroup of $G_n$ and notice that as at the 
end of the first paragraph of 
the proof of Theorem \ref{iteratedthmEnintro}, 
$H$ has finite virtual cohomological dimension. Thus, by 
applying Theorem 
\ref{profdisc} to the $f$-$H$-spectrum $\Enik$, we obtain for each $k$ the 
equivalence $(\Enik)^{hH} \simeq (\Enik)^{h_dH}$. We can build upon this 
equivalence to obtain, for each $k$, the equivalences
\begin{equation}\label{chainH}
(\Enik)^{hH} \simeq (\Enik)^{h_dH} \simeq \bigl(\bigl(\colim_{N \vartriangleleft_o G_n} 
E_n^{dhN}\bigr) 
\wedge M_{{I_k}}\bigr)^{h_dH} 
\simeq E_n^{dhH} \wedge M_{{I_k}},
\end{equation} 
where the above colimit is over the open 
normal subgroups of $G_n$, the second equivalence follows from the fact that the composition 
\[\bigl(\colim_{N \vartriangleleft_o G_n} E_n^{dhN}\bigr) \wedge M_{{I_k}} 
\overset{\simeq}{\longrightarrow} E_n \wedge M_{{I_k}} 
\overset{\simeq}{\longrightarrow} (E_n \wedge M_{{I_k}})_f \overset{\simeq}{\longrightarrow} 
\Enik\] is a weak equivalence in the category of discrete $H$-spectra (the key 
ingredient here is that the first map above is a weak equivalence of spectra: this is due to 
\cite{devinatzhopkins} and is made explicit in \cite[Theorem 6.3, Corollary 6.5]{davis}), and 
the last equivalence in (\ref{chainH}) 
is by \cite[Corollary 9.8]{davis} and \cite[Theorem 8.2.1]{behrensdavis}. By \cite[proof of Lemma 3.5]{devinatz2}, 
the spectrum $E_n^{dhH} \wedge M_{{I_k}}$ has finite homotopy groups, and hence, 
so does the spectrum $(\Enik)^{hH}$.

Our last conclusion implies that 
$\pi_\ast\bigl((\Enik)^{hK}\bigr)$ and $\pi_\ast\bigl((\Enik)^{hG}\bigr)$ 
are degreewise finite. Therefore, it follows from 
(\ref{usefullimit}) (and from a second application of (\ref{usefullimit}) 
with $G$ set equal to $K$), as in \cite[proof of Proposition 7.4]{homasa} 
(see also the beginning 
of the proof of Theorem 7.6 in \cite{davis2}), 
that the spectral sequence in (\ref{firstdss}) is the 
inverse limit over $\{k\}$ of conditionally convergent 
spectral sequences $E_r(k) \equiv E_r^{\ast, \ast}(k)$ that have the 
form
\[E_2^{s,t}(k) = H^s_c\bigl(G/K; \pi_t\bigl((\Enik)^{hK}\bigr)\bigr) 
\Rightarrow \pi_{t-s}\bigl((\Enik)^{hG}\bigr),\]
where for each $k$, $E_r(k)$ is 
constructed as an instance of the descent spectral sequence in (\ref{hfpss}), with abutment 
equal to $\pi_\ast\bigl((\holim_q L^q_k)^{hG/K}\bigr)$.  

We pause to introduce some helpful terminology: if $Z^\bullet$ is a cosimplicial spectrum that is fibrant in each codegree, then we refer 
to the conditionally convergent homotopy spectral sequence 
\[E_2^{s,t} = H^s\bigl[\pi_t(Z^\ast)\bigr] \Rightarrow \pi_{t-s}\bigl(\holim_\Delta 
Z^\bullet\bigr)\] as the 
homotopy spectral sequence for $\holim_\Delta 
Z^\bullet$. For clarity later, we go ahead and point out that 
as defined above, whenever we use this 
terminology, the relevant homotopy limit (that is, $\holim_\Delta Z^\bullet$) 
is always indexed by $\Delta$. 

By \cite[Proposition 3.20 and Lemma 3.21,$\mspace{-3mu}$~(b)]{hfplt}, 
spectral sequence $E_r(k)$ can be regarded as the homotopy spectral sequence for 
\[\holim_\Delta \mathrm{Map}_{G/K}\bigl((G/K)^{\bullet+1}, \holim_q L^q_k\bigr).\] Thus, 
\cite[Lemma 3.5]{hfplt} implies that $E_r(k)$ is the homotopy spectral 
sequence for \[\holim_\Delta \holim_q \mathrm{Map}_{G/K}\bigl((G/K)^{\bullet+1}, L^q_k\bigr).\] 
Since $L^q_k$ is an $f$-$G/K$-spectrum, the isomorphism in (\ref{cosimp}) 
(in the proof of Theorem \ref{profdisc}) shows 
that $E_r(k)$ is isomorphic to the homotopy spectral sequence for 
\[\holim_\Delta \holim_q \mathrm{Map}_c\bigl((G/K)^{\bullet+1}, L^q_k\bigr)^{G/K}.\] It follows
that $E_r(k)$ is isomorphic to spectral sequence 
$\mathcal{H}_r(k)$, which is defined to be the homotopy spectral sequence  
for 
\[\holim_\Delta \holim_{q \geq 0} \mathrm{Map}_c\bigl((G/K)^{\bullet+1}, L^q_k\bigr)^{G/K} 
\simeq \Bigl(\holim_{q \geq 0} L^q_k\Bigr)^{\negthinspace \mspace{0mu}
hG/K}\]
($\mathcal{H}_r(k)$ is an instance of the spectral sequence that is studied in 
\cite[Theorem 8.8]{davis}), where the above equivalence uses 
that $L^q_k$ is a fibrant spectrum for each $q \geq 0$ 
and $G/K$ has finite virtual cohomological dimension, 
and the expression on the right-hand side 
above is the $G/K$-homotopy fixed points of the continuous 
$G/K$-spectrum $\holim_{q \geq 0} L^q_k$ (in the sense of \cite{davis}).

We want to compare 
spectral sequence $\mathcal{H}_r(k)$ with a certain other spectral sequence, and to 
accomplish this, we need to do some preliminary work. The first step is to note that 
for each $q \geq 0$, there is a canonical map $\Enik \to P^q\Enik$ in the category of profinite 
$G_n$-spectra, and hence, 
there is the induced $G/K$-equivariant map
\begin{equation}\label{maptouse}
\colim_j \mathrm{Map}(EG, \Enik)^{K_j} \to \colim_j \mathrm{Map}(EG, P^q \Enik)^{K_j}.
\end{equation} The source of 
the map in (\ref{maptouse}) is the first term in the following chain of 
equivalences:
\begin{align*}
\colim_j \mathrm{Map}(EG, \Enik)^{K_j} & = \colim_j (\Enik)^{hK_j} 
\simeq \colim_j (E_n^{dhK_j} \wedge M_{{I_k}}) 
\\ & \simeq 
E_n^{dhK} \wedge M_{{I_k}},\end{align*} where the second equivalence 
follows from (\ref{chainH}) and the last equivalence is due to 
\cite[Proposition 6.3]{devinatzhopkins} (for the details, see \cite[Lemma 7.1]{davis2}).
As recalled earlier, the spectrum 
$E_n^{dhK} \wedge M_{{I_k}}$ has finite homotopy groups, and hence, 
the source $\colim_j \mathrm{Map}(EG, \Enik)^{K_j}$ of the map in (\ref{maptouse}) is a 
$\pi$-finite $G/K$-spectrum.

For each $q \geq 0$, there is a morphism
\[\colim_j \mathrm{Map}(EG, \Enik)^{K_j} \to L^q_k\] of discrete $G/K$-spectra 
that is equal to the composition
\[\colim_j \mathrm{Map}(EG, \Enik)^{K_j} \to 
F^s_{G/K}\bigl(\colim_j \mathrm{Map}(EG, \Enik)^{K_j}\bigr) 
\to L^q_k,\] where the first map in the above composition exists by 
Theorem \ref{Gstablefinitecompletion}, since 
the source is a $\pi$-finite $G/K$-spectrum, and the 
last map is obtained by applying $F^s_{G/K}$ to the 
map in (\ref{maptouse}). It follows that for each $q \geq 0$, there is an induced map
\[
\mathrm{Map}_c\bigl((G/K)^{\bullet+1}, \colim_j \mathrm{Map}(EG, 
\Enik)^{K_j}\bigr)^{\negthinspace G/K} \to \mathrm{Map}_c\bigl((G/K)^{\bullet+1}, 
L^q_k\bigr)^{G/K}
\] of cosimplicial spectra, and hence, there is a map
\[
\mathrm{Map}_c\bigl((G/K)^{\bullet+1}, \colim_j \mathrm{Map}(EG, 
\Enik)^{K_j}\bigr)^{\negthinspace G/K} \to \lim_{q \geq 0} \mathrm{Map}_c\bigl((G/K)^{\bullet+1}, L^q_k\bigr)^{G/K}
\] of cosimplicial spectra. 
Composition of the last map above with the canonical map
from the limit to the homotopy limit yields a map
\[
\mathrm{Map}_c\bigl((G/K)^{\bullet+1}, \colim_j \mathrm{Map}(EG, 
\Enik)^{K_j}\bigr)^{\negthinspace G/K} \to \holim_{q \geq 0} \mathrm{Map}_c\bigl((G/K)^{\bullet+1}, L^q_k\bigr)^{G/K}
\] that induces a morphism from $\mathcal{D}_r(k)$, which is defined to be 
the homotopy spectral sequence  for 
\[
\holim_\Delta \mathrm{Map}_c\bigl((G/K)^{\bullet+1}, \colim_j \mathrm{Map}(EG, 
\Enik)^{K_j}\bigr)^{\negthinspace G/K},\] to spectral sequence 
$\mathcal{H}_r(k)$.

The isomorphism between spectral sequences $\mathcal{H}_r(k)$ and $E_r(k)$, 
established earlier, implies that $\mathcal{H}_r(k)$ has $E_2$-term isomorphic to 
\[E_2^{s,t}(k) = H^s_c\bigl(G/K; \pi_t\bigl((\Enik)^{hK}\bigr)\bigr).\] Also, as in 
\cite[Theorem 7.9]{davis}, spectral sequence $\mathcal{D}_r(k)$ has 
$E_2$-term equal to 
\[H^s_c\bigl(G/K; \pi_t\bigl(\colim_j \mathrm{Map}(EG, 
\Enik)^{K_j}\bigr)\bigr),\] with $\pi_t\bigl(\colim_j \mathrm{Map}(EG, \Enik)^{K_j}\bigr)$ 
equal to a discrete $G/K$-module. Since 
\[\colim_j \mathrm{Map}(EG, 
\Enik)^{K_j} \simeq E_n^{dhK} \wedge M_{{I_k}} \simeq (\Enik)^{hK},\] where the 
last equivalence above applies (\ref{chainH}), and the 
finite profinite $G/K$-module $\pi_t\bigl((\Enik)^{hK}\bigr)$ is automatically 
a discrete $G/K$-module, the 
$E_2$-terms of $\mathcal{D}_r(k)$ and $\mathcal{H}_r(k)$ are isomorphic. Therefore, 
spectral sequences $E_r(k)$ and $\mathcal{D}_r(k)$ are 
isomorphic to each other from the $E_2$-terms onward. 

In \cite[proof of Theorem 7.6]{davis2}, there is a descent 
spectral sequence that is referred to as $E_r^{\ast, \ast}(K, G, k)$ and which 
has the form
\begin{equation}\label{lastdss}
H^s_c(G/K; \pi_t(E_n^{h'K} \wedge M_{I_k})) \Rightarrow 
\pi_{t-s}(E_n^{h'G} \wedge M_{I_k}).
\end{equation} 
As done with the notation ``$(E_n \wedge M_{I_k})_f$", if $Z$ denotes an arbitrary spectrum, 
we let $Z_f$ be a functorial fibrant replacement of 
$Z$ in the stable model category of spectra. It follows from 
\cite[(6.2)]{davis2} that spectral sequence $E_r^{\ast, \ast}(K, G, k)$ is 
the homotopy spectral sequence for 
\[\holim_\Delta \mathrm{Map}_c\bigl((G/K)^{\bullet+1}, \colim_{N \vartriangleleft_o G_n} 
(E_n^{dhNK} \wedge M_{I_k})_f\bigr)^{G/K}.\] Because 
\[\colim_j \mathrm{Map}(EG, \Enik)^{K_j} \simeq E_n^{dhK} \wedge M_{I_k} 
\simeq \colim_{N \vartriangleleft_o G_n}(E_n^{dhNK} \wedge M_{I_k})_f,\] where the 
last equivalence follows from \cite[Definition 1.5]{devinatzhopkins} (for some 
details, see \cite[Lemma 5.2]{davis2}), there is an isomorphism between 
spectral sequences $\mathcal{D}_r(k)$ and $E_r^{\ast, \ast}(K, G, k)$ (to see 
this, it is helpful to recall the 
definition of $\mathcal{D}_r(k)$). 

Since the spectral sequence 
in (\ref{firstdss}) is the spectral sequence $\lim_k E_r(k)$, it is isomorphic 
to the spectral sequence $\lim_k \mathcal{D}_r(k)$, and hence, 
to the spectral sequence $\lim_k E_r^{\ast, \ast}(K,G,k)$, from the $E_2$-term onward. 
Since $\lim_k E_r^{\ast, \ast}(K,G,k)$ is the descent spectral sequence
\[H^s_c(G/K; \pi_t(E_n^{h'K})) \Rightarrow \pi_{t-s}(E_n^{h'G})\] of (\ref{davis2SS}) (see 
\cite[proof of Theorem 7.6]{davis2}), which is 
isomorphic to the strongly convergent spectral sequence
\[H^s_c(G/K; \pi_t(E_n^{dhK})) \Rightarrow \pi_{t-s}(E_n^{dhG})\] 
of (\ref{lhsSS}) from the $E_2$-term onward, it follows that the spectral 
sequence in (\ref{firstdss}) is isomorphic to spectral sequences (\ref{lhsSS}) and 
(\ref{davis2SS}) from the $E_2$-term onward, and its strong convergence 
follows from that of (\ref{lhsSS}).  
\end{proof}
     
\bibliographystyle{amsplain}

\end{document}